\def \<{\langle}
\def \>{\rangle}
\newcommand{\Z}{\mathbb{Z}}
\newcommand{\R}{\mathbb{R}}
\newcommand{\SU}{\mathrm{SU}}
\newcommand{\Sp}{\mathrm{Sp}}
\newcommand{\SO}{\mathrm{SO}}
\newcommand{\Spin}{\mathrm{Spin}}
\newcommand{\I}{\mathrm{I}}
\newcommand{\G}{\mathrm{G}}
\newcommand{\HH}{\mathrm{H}}
\newcommand{\K}{\mathrm{K}}
\newcommand{\RS}{\mathrm{S}}
\newcommand{\T}{\mathrm{T}}
\newcommand{\LL}{\mathrm{L}}
\newcommand{\N}{\mathrm{N}}
\newcommand{\M}{\mathrm{M}}
\newcommand{\X}{\mathrm{X}}
\newcommand{\Y}{\mathrm{Y}}
\newcommand{\SP}{\mathbf{S}} 
\newcommand{\PS}{\mathbf{P}} 
\newcommand{\Hom}{\mathrm{Hom}}
\newcommand{\mytableextraspace}{\addlinespace[.4em]}
\DeclareMathOperator{\Proj}{Proj}
\DeclareMathOperator{\Isom}{Isom}
\DeclareMathOperator{\Sign}{Sign}
\DeclareMathOperator{\Susp}{Susp}
\DeclareMathOperator{\rank}{rank}
\newtheorem{thm}{Theorem}[section]
\newtheorem{lem}[thm]{Lemma}
\newtheorem{prop}[thm]{Proposition}
\newtheorem{claim}[thm]{Claim}
\newtheorem{theorem}{Theorem}
\newtheorem{corollary}[theorem]{Corollary}
\theoremstyle{definition}
\newtheorem{defn}[thm]{Definition}
\newtheorem{example}[thm]{Example}
\newtheorem*{ack}{Acknowledgements}
\numberwithin{equation}{section}
\begin{document}


\author[F.~Galaz-Garc\'ia]{Fernando Galaz-Garc\'ia}
\address[F.~Galaz-Garc\'ia]{Institut f\"ur Algebra und Geometrie, Karlsruher Institut f\"ur Technologie (KIT), Karlsruhe, Germany.}
\email{galazgarcia@kit.edu}


\author[M.~Zarei]{Masoumeh Zarei} 
\address[M.~Zarei]{Department of Pure Mathematics, Faculty of Mathematical Sciences, Tarbiat Modares University, Tehran, Iran and Institut f\"ur Algebra und Geometrie, Karlsruher Institut f\"ur Technologie (KIT), Karlsruhe, Germany.}
\email{masoumeh.zarei@modares.ac.ir}

\title{Cohomogeneity one topological manifolds revisited}
\date{\today}


\subjclass[2010]{57S10, 57M60, 57S25, 57R10, 57N15} 
\keywords{topological manifold, cohomogeneity one, group action, smoothing}


\begin{abstract} 
We prove a structure theorem for closed topological manifolds of cohomogeneity one; this result corrects an oversight in the literature.  We complete the equivariant classification of closed, simply connected cohomogeneity one topological manifolds in dimensions $5$, $6$, and $7$ and obtain topological characterizations of these spaces. In these dimensions, these manifolds are homeomorphic to smooth manifolds.
\end{abstract}

\maketitle



\section{Main Results}

A topological manifold with an (effective) topological action of a compact Lie group is of \emph{cohomogeneity one} if its orbit space is one-dimensional. These manifolds were introduced by Mostert \cite{Mo} in 1957 and their topology and geometry have been extensively studied  in the smooth category (see, for example, \cite{EU,Fr,GM,Ho1,Ho2,Ho3,Ne,Pa} and \cite{Bo,DS,De,GVZ,GWZ,GZ00,GZ02,Se,ST,Zi}). Much less attention has been given to these spaces in the topological category, probably because of the assertion in \cite{Mo} that every topological manifold of cohomogeneity one is equivariantly homeomorphic to a smooth manifold. This statement originates in the 
claim that an integral homology sphere that is also a homogeneous space for a compact Lie group must be  a standard sphere (see \cite[Section 2, Corollary]{Mo}). This, however, is not the case. Indeed, the Poincar\'e homology sphere $\PS^3$ is a homogeneous space for the Lie groups $\SU(2)$ and $\SO(3)$ and it can be written as $\PS^3\approx \SU(2)/\I^*\approx \SO(3)/\I$, where $\I^*$ is the binary icosahedral group and $\I$ is the icosahedral group  (see \cite{KS} or \cite[p.~89]{W}). One can combine this fact with the Double Suspension Theorem of Edwards and Cannon \cite{Ca,Ed} to construct topological manifolds with cohomogeneity one actions that are not equivariantly homeomorphic to smooth actions (see Example~\ref{E:C1P3}). We point out that, by work of Bredon \cite{Br}, the Poincar\'e homology sphere is the only integral homology sphere that is also a homogeneous space, besides the usual spheres. In the present article we fix the gap in \cite{Mo} and explore some of its consequences.


Our first result is a complete structure theorem for closed cohomogeneity one topological manifolds (cf.~\cite[Theorem~4]{Mo}). As is customary, we say that a manifold is \emph{closed} if it is compact and has no boundary. We briefly discuss the non-compact case in Section~\ref{S:Preliminaries}.
 

\begin{theorem}
\label{T:STRUCTURE}
Let $\M$ be a closed topological manifold with an (almost) effective topological $\G$ action of cohomogeneity one with principal isotropy $\HH$.  Then the orbit space is homeomorphic to either a closed interval or to a circle, and the following hold.
 \begin{itemize}
 	\item[(1)]  If the orbit space of the action is an interval, then $\M$ is the union of two fiber bundles over the two singular orbits whose fibers are cones over spheres or the Poincar\'e homology sphere, that is,
	\[ 
		\M=\G\times_{\K^{-}}C(\K^{-}/\HH) \cup_{\G/\HH}\G\times_{\K^{+}} C(\K^{+}/\HH).
	\]
	The group diagram of the action is given by $(\G, \HH, \K^{-}, \K^{+})$, where $\K^{\pm}/\HH$ are spheres or the Poincar\'e homology sphere. Conversely, a group diagram $(\G, \HH, \K^{-},\K^{+})$, where $\K^{\pm}/\HH$ are homeomorphic to a sphere, or to the Poincar\'e homology sphere with $\dim \G/\HH\geq 4$, determines a cohomogeneity one topological manifold.
	\\
  	\item[(2)] If the orbit space of the action is a circle, then $\M$ is equivariantly homeomorphic to a $\G/\HH$-bundle over a circle with structure group $N(\HH)/\HH$. 
	\end{itemize}
\end{theorem}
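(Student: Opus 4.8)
The plan is to read off the global structure of $\M$ from its orbit space together with the equivariant local models supplied by the slice theorem, and then to treat the converse as a verification that the glued space is a genuine topological manifold. First I would establish that the orbit space $\M/\G$ is a compact, connected $1$-manifold with (possibly empty) boundary, hence homeomorphic to $[0,1]$ or to $S^1$. The input is the existence of slices for a compact Lie group action (Bredon): every orbit $\G/\K$ has an invariant neighbourhood of the form $\G\times_{\K}S$, where $S$ is a slice on which the isotropy group $\K$ acts. A neighbourhood of the corresponding point of $\M/\G$ is $S/\K$, which is one-dimensional by the cohomogeneity one hypothesis; it is an open interval for principal orbits and a half-open interval $[0,\eps)$ for non-principal orbits. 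Thus $\M/\G$ is a $1$-manifold whose boundary points are exactly the images of the non-principal orbits, and compactness of $\M$ forces $\M/\G\approx[0,1]$ or $S^1$.

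Next I would analyse the slice at a non-principal orbit. Since the $\K$-action on $S$ has cohomogeneity one with quotient $[0,\eps)$ and a single orbit type $\K/\HH$ off the apex, the slice is $\K$-equivariantly a cone $C(\K/\HH)$ whose link is the space of nearby principal orbits, and the invariant neighbourhood is $\G\times_{\K}C(\K/\HH)$. The crucial point is that $\K/\HH$ must be a homology sphere: at a point $p$ of the orbit a neighbourhood is $\R^{\dim\G/\K}\times C(\K/\HH)$, so the local homology of $\M$ at $p$ is that of the link $S^{\dim\G/\K-1}*(\K/\HH)$ of the cone point; as $\M$ is a manifold this link is a homology sphere, and by the K\"unneth formula for joins so is $\K/\HH$. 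Being a homogeneous space of compact Lie groups, $\K/\HH$ is then, by Bredon's classification, either a standard sphere or the Poincar\'e homology sphere $\PS^3$. When $\M/\G\approx[0,1]$ the two endpoints give singular orbits $\G/\K^{\pm}$; since the interior carries only principal orbits, the preimages of the two halves are the tubular neighbourhoods $\G\times_{\K^{\pm}}C(\K^{\pm}/\HH)$, glued along the principal orbit $\G/\HH$ over an interior point, which is the asserted decomposition with group diagram $(\G,\HH,\K^-,\K^+)$. When $\M/\G\approx S^1$ there are no boundary points, hence no non-principal orbits, so the orbit map $\M\to S^1$ is a fibre bundle with fibre $\G/\HH$; its transition functions are $\G$-equivariant homeomorphisms of $\G/\HH$, so the structure group reduces to $N(\HH)/\HH$, proving (2).

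For the converse in (1) I would form $\M=\G\times_{\K^-}C(\K^-/\HH)\cup_{\G/\HH}\G\times_{\K^+}C(\K^+/\HH)$ from a diagram with each $\K^{\pm}/\HH$ a sphere or $\PS^3$, and check that $\M$ is a topological manifold. Away from the singular orbits $\M$ is locally $\G/\HH\times(-\eps,\eps)$, so the only issue is at a point of a singular orbit, where a neighbourhood is $\R^{d}\times C(\K^{\pm}/\HH)$ with $d=\dim\G/\K^{\pm}$, and the relevant link is the join $S^{d-1}*(\K^{\pm}/\HH)$. If $\K^{\pm}/\HH$ is a genuine sphere this join is a sphere and $\M$ is manifestly a manifold. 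The essential new phenomenon is the case $\K^{\pm}/\HH\approx\PS^3$: here $C(\PS^3)$ is \emph{not} a manifold, so the classical slice disk is genuinely replaced by a cone, and it is only after taking the product with the orbit directions that the manifold property can be restored.

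The main obstacle is exactly this last verification, and it is where the Edwards--Cannon double suspension theorem enters. The technical heart is the standard homeomorphism $\R^{d}\times C(\Sigma)\approx C\bigl(S^{d-1}*\Sigma\bigr)$, valid for any $\Sigma$ (product of cones is the cone on the join), together with the identification $S^{d-1}*\Sigma\approx\Susp^{d}\Sigma$. With $\Sigma=\PS^3$ the local link is therefore an iterated suspension of the Poincar\'e homology sphere, and the dimension hypothesis on $\G/\HH$ is precisely what guarantees that enough suspension directions are present for the double suspension theorem to apply, so that the link is again a topological sphere and $\M$ is a manifold. Everything preceding this is the faithful topological analogue of the smooth cohomogeneity one structure theory and is governed by the slice theorem; establishing that slice theorem cleanly in the topological category, and running the double suspension argument at the $\PS^3$ orbits, are the two steps I expect to require the most care.
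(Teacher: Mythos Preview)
Your overall architecture matches the paper's: lean on Mostert's slice-and-mapping-cylinder analysis for the forward direction and the circle case, invoke Bredon to pin down $\K^{\pm}/\HH$ as a sphere or $\PS^3$, and then concentrate all the work on the converse when a $\PS^3$ appears. The forward direction and part (2) are fine.

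The gap is in your converse argument at the bottom of the dimension range. With $\K^{+}/\HH\approx\PS^3$ you have $d=\dim\G/\K^{+}=\dim\G/\HH-3$, so the hypothesis $\dim\G/\HH\ge 4$ only yields $d\ge 1$, not $d\ge 2$. In the boundary case $d=1$ (realized e.g.\ by the diagram $(\RS^3\times\RS^1,\I^*\times\Z_k,\I^*\times\RS^1,\RS^3\times\Z_k)$ giving $\PS^3\ast\SP^1\approx\SP^5$) your link is $\SP^{0}\ast\PS^3=\Susp\PS^3$, a \emph{single} suspension. This is not a topological sphere---indeed it is not even a manifold---so the Double Suspension Theorem does not apply in the form you invoke, and the assertion ``the link is again a topological sphere'' is false here. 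Your claim that the dimension hypothesis ``precisely'' guarantees enough suspension directions is therefore off by one.

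The paper closes this gap by appealing not to the Double Suspension Theorem per se but to the stronger polyhedral recognition theorem of Edwards: a finite polyhedron is a closed topological $n$-manifold if and only if the link of every vertex is simply connected (for $n\ge 3$) and the link of every point is a homology $(n-1)$-sphere. Since $\Susp\PS^3$ is a simply connected homology $4$-sphere, this criterion applies even when $d=1$, and the glued space is a manifold without ever needing the link to be an honest sphere. An alternative patch that stays closer to your line is to note that $\Susp^2\PS^3\approx\SP^5$ implies, by deleting a cone point, that the \emph{open cone} on $\Susp\PS^3$ is homeomorphic to $\R^5$; thus $\R^1\times C(\PS^3)\approx C(\Susp\PS^3)\approx\R^5$ even though $\Susp\PS^3\not\approx\SP^4$. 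Either way, the point is that for $d=1$ you must argue at the level of the cone (or via Edwards' polyhedral criterion) rather than at the level of the link.
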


This theorem stands in contrast with the corresponding statement in the smooth category, where the fibers of the bundle decomposition are cones over spheres, i.e.~disks, and the manifold decomposes as a union of two disk bundles. We prove Theorem~\ref{T:STRUCTURE} in Section~\ref{S:Preliminaries}. 
\\

It is well known that a  closed smooth cohomogeneity one $\G$-manifold  admits a $\G$-invariant Riemannian metric with a lower sectional curvature bound. Alexandrov spaces are synthetic generalizations of Riemannian manifolds with curvature bounded below (see \cite{BBI,BGP}). Theorem~\ref{T:STRUCTURE}, in combination with work of Galaz-Garc\'ia and Searle \cite{GS}, implies:

\begin{corollary}
\label{C:EQUIV_ALEX_METRIC}
A closed topological manifold of cohomogeneity one admits an invariant Alexandrov metric.
\end{corollary}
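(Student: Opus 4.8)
The plan is to read off the decomposition of $\M$ from Theorem~\ref{T:STRUCTURE} and then identify it with the decomposition of a cohomogeneity one Alexandrov space produced in \cite{GS}, from which an invariant metric can be transported back to $\M$. Accordingly I would split into the interval and circle cases of the structure theorem.

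Suppose first that the orbit space is an interval, so that $\M = \G\times_{\K^{-}}C(\K^{-}/\HH)\cup_{\G/\HH}\G\times_{\K^{+}}C(\K^{+}/\HH)$ with group diagram $(\G,\HH,\K^{-},\K^{+})$, each link $\K^{\pm}/\HH$ being homeomorphic to a sphere or to the Poincar\'e homology sphere $\PS^3$. The first step is to observe that every such link is a positively curved homogeneous space: a sphere carries its round metric of curvature $1$, and $\PS^3\approx\SU(2)/\I^*$ inherits from the round three-sphere $\RS^3$ a homogeneous metric of constant curvature $1$, since $\I^*$ acts freely and isometrically on $\RS^3$. With this in hand, the group diagram is precisely of the kind that, by \cite{GS}, determines a cohomogeneity one Alexandrov space: each cone $C(\K^{\pm}/\HH)$ over a space of curvature $\geq 1$ is an Alexandrov space of curvature $\geq 0$, each associated bundle $\G\times_{\K^{\pm}}C(\K^{\pm}/\HH)$ carries a $\G$-invariant Alexandrov metric whose boundary is an isometric copy of $\G/\HH$, and Petrunin's gluing theorem then endows the union with a $\G$-invariant Alexandrov metric. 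Since this construction yields a cohomogeneity one Alexandrov space with the same group diagram $(\G,\HH,\K^{-},\K^{+})$, it is equivariantly homeomorphic to $\M$, and pulling the metric back along this homeomorphism produces the desired invariant Alexandrov metric on $\M$.

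If instead the orbit space is a circle, then by Theorem~\ref{T:STRUCTURE} the space $\M$ is a $\G/\HH$-bundle over $S^1$ with structure group $\N(\HH)/\HH$. Here the fiber $\G/\HH$ is a closed homogeneous manifold, the bundle is smoothable, and it admits a $\G$-invariant Riemannian metric of bounded sectional curvature; any such metric is in particular an invariant Alexandrov metric. Alternatively, this case is handled directly by the circle part of the structure theory in \cite{GS}.

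The only step requiring real argument, rather than a direct appeal to \cite{GS}, is the identification of the links of Theorem~\ref{T:STRUCTURE} with the positively curved homogeneous spaces occurring as spaces of directions in a cohomogeneity one Alexandrov space. For spheres this is immediate; the substantive point is the Poincar\'e homology sphere, where one must exhibit the constant-curvature-$1$ homogeneous metric and verify that the cone $C(\PS^3)$ matches the normal-slice model at a singular orbit. The dimension hypothesis $\dim\G/\HH\geq 4$ in the converse of Theorem~\ref{T:STRUCTURE}, needed for $\M$ to be a manifold, plays no role here: the cone over $\PS^3$ is an Alexandrov space regardless of dimension, so once $\M$ is known to be a manifold of cohomogeneity one the remaining argument is the existence (converse) part of the structure theorem of \cite{GS} combined with the gluing theorem, applied to the decomposition given by Theorem~\ref{T:STRUCTURE}.
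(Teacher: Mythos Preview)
Your proof is correct and takes essentially the same approach as the paper: read off the group diagram from Theorem~\ref{T:STRUCTURE} and invoke \cite{GS} to supply the invariant Alexandrov metric. The paper organizes the cases by smoothable versus non-smoothable action (using an invariant Riemannian metric in the former case and citing \cite[Proposition~5]{GS} directly in the latter) rather than by orbit-space type, and it suppresses the cone-and-gluing details you spell out, but the content is the same.
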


We prove this corollary in Section~\ref{S:EQ_ALEX_METRICS}. It is an open question whether every topological manifold admits an Alexandrov metric. Corollary~\ref{C:EQUIV_ALEX_METRIC} shows that this is true if $\M$ admits a cohomogeneity one action. 
\\

A  topological manifold $\M$ is \emph{smoothable} if it  is homeomorphic to a smooth manifold. A topological $\G$-action on a smoothable topological manifold $\M$ is \emph{smoothable} if it is equivalent to a smooth $\G$-action on $\M$. The following results are simple consequences of Theorem~\ref{T:STRUCTURE}.


\begin{corollary}
\label{T:SMOOTH_CONDITION} A closed cohomogeneity one topological manifold is equivariantly homeomorphic to a smooth manifold  if and only if every slice is homeomorphic to a disk. 
\end{corollary}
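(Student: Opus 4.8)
The plan is to deduce both implications from Theorem~\ref{T:STRUCTURE}, using the observation that at a singular orbit $\G/\K^{\pm}$ the slice is the cone $C(\K^{\pm}/\HH)$, together with the fact that the cone construction is a homeomorphism invariant. Since $C(S^{n})\approx D^{n+1}$, while $C(\PS^3)$ is not a manifold (its cone point has link $\PS^3\not\approx S^3$, whereas an interior point of a disk has link a sphere), a slice is homeomorphic to a disk precisely when the corresponding $\K^{\pm}/\HH$ is a sphere rather than the Poincar\'e homology sphere. Thus, through the group diagram of Theorem~\ref{T:STRUCTURE}, the condition ``every slice is a disk'' is equivalent to all the quotients $\K^{\pm}/\HH$ being spheres. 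When the orbit space is a circle there are no singular orbits and $\M$ is a $\G/\HH$-bundle over $S^1$, which is automatically smooth, so it suffices to treat the interval case.

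For the forward direction, I would suppose $\M$ is equivariantly homeomorphic to a smooth $\G$-manifold $\N$ via a map $\phi$. Then $\phi$ carries orbits to orbits and tubular neighborhoods to tubular neighborhoods, hence induces homeomorphisms of slices. By the differentiable slice theorem, every slice of the smooth cohomogeneity one manifold $\N$ is a linear disk; therefore every slice of $\M$ is homeomorphic to a disk.

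For the converse, assume every slice is a disk, so each $\K^{\pm}/\HH$ is homeomorphic to a sphere. Since $\K^{\pm}/\HH$ is a smooth homogeneous space of the compact Lie group $\K^{\pm}$, the classical fact that a transitive smooth action of a compact Lie group on a sphere is equivalent to a linear action shows that $(\K^{\pm},\K^{\pm}/\HH)$ is equivariantly diffeomorphic to a standard linear sphere. Consequently $C(\K^{\pm}/\HH)$ is equivariantly homeomorphic to a linear disk, and $\G\times_{\K^{\pm}}C(\K^{\pm}/\HH)$ is equivariantly homeomorphic to the smooth disk bundle determined by this linear slice representation. Gluing the two smooth disk bundles along $\G/\HH$ by an equivariant (hence, after an equivariant isotopy, smooth) homeomorphism yields a smooth cohomogeneity one $\G$-manifold with group diagram $(\G,\HH,\K^{-},\K^{+})$; since this smooth manifold and $\M$ are realized by the same group diagram, Theorem~\ref{T:STRUCTURE} shows they are equivariantly homeomorphic, so $\M$ is smoothable equivariantly.

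The main obstacle is the converse direction: one must pass from the purely topological datum ``$\K^{\pm}/\HH$ is a sphere'' to a genuinely smooth, linearly acting model for the slice, so that the cone becomes a smooth disk and the two halves can be reassembled smoothly and $\G$-equivariantly. This is exactly where the linearity of transitive compact-group actions on spheres is essential; once the slices are linearized, matching the reconstruction to the topological manifold of Theorem~\ref{T:STRUCTURE} is routine.
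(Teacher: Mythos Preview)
Your proposal is correct and follows essentially the same route the paper intends: the corollary is stated as a ``simple consequence'' of Theorem~\ref{T:STRUCTURE} without a separate proof, and your argument unpacks exactly that implication---slices are the cones $C(\K^{\pm}/\HH)$, these are disks precisely when $\K^{\pm}/\HH$ is a sphere rather than $\PS^3$, and in the sphere case Mostert's original smooth construction (linearizing the transitive $\K^\pm$-action on the slice boundary) yields a smooth $\G$-manifold with the same group diagram, hence equivariantly homeomorphic to $\M$. Your treatment of the circle orbit space and of the forward direction via the differentiable slice theorem is also standard and correct.
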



\begin{corollary}
\label{C:SLICES}
Let $\M$ be a closed topological manifold with a cohomogeneity one $\G$-action. 
	\begin{enumerate}
		\item If the codimension of the singular orbits is not $4$, then the action is smoothable.
		\item If the codimension of some singular orbit is $4$, then the action is
		\begin{enumerate}
			\item smoothable if and only if the $4$-dimensional slices are homeomorphic to $4$-dimensional disks.
			\item non-smoothable if and only if some $4$-dimensional slice is homeomorphic to the cone over the Poincar\'e homology sphere.
		\end{enumerate}
	\end{enumerate}
\end{corollary}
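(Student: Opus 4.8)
The plan is to deduce everything from the smoothing criterion of Corollary~\ref{T:SMOOTH_CONDITION}, according to which the action is equivariantly homeomorphic to a smooth manifold if and only if every slice is homeomorphic to a disk. By Theorem~\ref{T:STRUCTURE}, the only slices that can possibly fail to be disks are those at the two singular orbits, each of which is a cone $C(\K^{\pm}/\HH)$ whose link $\K^{\pm}/\HH$ is homeomorphic either to a sphere or to the Poincar\'e homology sphere $\PS^3$ (the slice at a principal orbit is always a one-dimensional disk). First I would record the elementary dimension count relating a singular slice to its orbit: the codimension of the singular orbit $\G/\K^{\pm}$ in $\M$ equals $\dim(\K^{\pm}/\HH)+1$, which is precisely $\dim C(\K^{\pm}/\HH)$, the dimension of the slice.

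Next I would split according to the two shapes of a singular slice. When $\K^{\pm}/\HH\approx S^{k}$, the slice is $C(S^{k})\approx D^{k+1}$, a disk of dimension $k+1$. When $\K^{\pm}/\HH\approx \PS^3$, the slice is $C(\PS^3)$, which is four-dimensional and is \emph{not} homeomorphic to a disk, because it is not even a manifold: arbitrarily small deleted neighborhoods of the cone point are homotopy equivalent to $\PS^3$ and so have the nontrivial binary icosahedral group as fundamental group, whereas every point of a topological four-manifold admits arbitrarily small simply connected deleted neighborhoods. The decisive point is that $\PS^3$ has dimension exactly $3$, so a non-disk slice, and hence the obstruction to smoothing, can occur only in codimension $4$.

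With these observations the three assertions follow. For part (1), if no singular orbit has codimension $4$, then neither $\K^{-}/\HH$ nor $\K^{+}/\HH$ can be $\PS^3$, as that would force a four-dimensional slice; hence both are spheres, every slice is a disk, and Corollary~\ref{T:SMOOTH_CONDITION} yields that the action is smoothable. For part (2), suppose some singular orbit has codimension $4$. Every slice of dimension other than $4$ is automatically a disk by the dimension count, so ``every slice is a disk'' is equivalent to ``every four-dimensional slice is a disk''; moreover a four-dimensional slice is either $D^4$ (when $\K^{\pm}/\HH\approx S^3$) or $C(\PS^3)$ (when $\K^{\pm}/\HH\approx\PS^3$), with no other possibility. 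Feeding this dichotomy into Corollary~\ref{T:SMOOTH_CONDITION} gives (2a), that the action is smoothable exactly when all four-dimensional slices are disks; negating both sides and using that the only non-disk four-dimensional slice is $C(\PS^3)$ gives (2b).

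The sole step that is not bookkeeping on top of Theorem~\ref{T:STRUCTURE} and Corollary~\ref{T:SMOOTH_CONDITION} is the verification that $C(\PS^3)\not\approx D^4$, and I expect this to be the main---though essentially standard---obstacle; the fundamental-group argument sketched above, distinguishing the link $\PS^3$ from $S^3$, resolves it.
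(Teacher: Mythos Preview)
Your argument is correct and is exactly the intended one: the paper states Corollaries~\ref{T:SMOOTH_CONDITION} and~\ref{C:SLICES} as ``simple consequences of Theorem~\ref{T:STRUCTURE}'' without spelling out a proof, and what you have written is precisely the natural unpacking of that implication. The dimension count, the dichotomy $\K^\pm/\HH\in\{S^k,\PS^3\}$ from Theorem~\ref{T:STRUCTURE}, and the local fundamental-group obstruction showing $C(\PS^3)\not\approx D^4$ are all the ingredients one needs.
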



\begin{corollary}
\label{C:LDim}
Every cohomogeneity one action on a topological $n$-manifold, $n\leq 4$, is smoothable.
\end{corollary}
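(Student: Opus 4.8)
The plan is to route everything through Corollary~\ref{C:SLICES}, which reduces non-smoothability to the appearance of a single local model. By that corollary, a closed cohomogeneity one topological manifold $\M$ fails to be smoothable precisely when it has a singular orbit of codimension $4$ whose slice is homeomorphic to the cone $C(\PS^3)$ over the Poincar\'e homology sphere. Hence it suffices to prove that no such slice can occur once $\dim\M=n\leq 4$.

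The cases $n\leq 3$ are immediate: every singular orbit then has codimension at most $n\leq 3<4$, so the obstruction above cannot arise and Corollary~\ref{C:SLICES}(1) yields smoothability. The same conclusion holds when $n=4$ and the orbit space is a circle, since by Theorem~\ref{T:STRUCTURE}(2) the action then has no singular orbits and Corollary~\ref{C:SLICES}(1) applies vacuously. This leaves only the case $n=4$ with orbit space an interval.

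The heart of the matter is a dimension count showing that in a $4$-manifold there is no room to heal a cone singularity. A singular orbit $\G/\K^{\pm}$ of codimension $4$ inside $\M^{4}$ has dimension $4-4=0$, hence is a finite set of points (a single fixed point when $\G$ is connected). A neighborhood of each such point is therefore homeomorphic to the cone $C(\K^{\pm}/\HH)$, with the point as cone vertex and $\dim(\K^{\pm}/\HH)=3$. Were this slice $C(\PS^3)$, a deleted neighborhood of the vertex would deformation retract onto $\PS^3$, whose fundamental group is the nontrivial binary icosahedral group; but a deleted neighborhood of a point in a topological $4$-manifold is simply connected. This contradiction forces $\K^{\pm}/\HH\approx S^{3}$, so the slice is $C(S^{3})\approx D^{4}$, and the action is smoothable by Corollary~\ref{C:SLICES}(2)(a) (equivalently, by Corollary~\ref{T:SMOOTH_CONDITION}).

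I expect the conceptual crux, rather than any computation, to be isolating why dimension $4$ is special. The mechanism producing genuine non-smoothable actions is the double suspension phenomenon: for a codimension $4$ singular orbit of positive dimension one sees, locally along the orbit, a factor of the form $C(\PS^3)\times\R^{k}$ with $k\geq 1$, and writing $c$ for the open cone one has $c\PS^3\times\R\cong c(\Susp\PS^3)\cong\R^{5}$ (using $\Susp^{2}\PS^3\cong S^{5}$ and removing one suspension pole), so this model is already a manifold once $k\geq 1$, i.e.\ once $n\geq 5$. In dimension $4$ the orbit collapses to a point, the stabilizing $\R$-factor disappears, and $C(\PS^3)$ is exposed as a non-manifold; this is exactly the obstruction that the argument above exploits.
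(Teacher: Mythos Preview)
Your argument is correct and is precisely the elaboration the paper has in mind when it calls Corollary~\ref{C:LDim} a ``simple consequence'' of Theorem~\ref{T:STRUCTURE} (via Corollary~\ref{C:SLICES}): the only obstruction to smoothability is a slice $C(\PS^3)$ at a codimension-$4$ singular orbit, and in dimension $\leq 4$ such an orbit would be zero-dimensional, exposing the non-manifold cone point of $C(\PS^3)$. Your local-fundamental-group check and the concluding remark on why the double-suspension mechanism needs an extra $\R$-factor (hence $n\geq 5$) exactly match the dimension hypothesis $\dim\G/\HH\geq 4$ in the converse part of Theorem~\ref{T:STRUCTURE}(1); the paper gives no further proof beyond this.
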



Closed, smooth manifolds of cohomogeneity one have been classified equivariantly by Mostert \cite{Mo} and Neumann \cite{Ne} in dimensions $2$ and $3$, by Parker \cite{Pa} in dimension $4$, and, assuming simply connectedness, by Hoelscher \cite{Ho2} in dimensions $5$, $6$ and $7$. Mostert, Neumann and Parker gave canonical representatives for the classes in the equivariant classification  in dimension $n\leq 4$. This was also done by Hoelscher \cite{Ho1} in dimensions $5$ and $6$ in the simply connected case. In dimension $7$,  Hoelscher \cite{Ho3}, Escher and Ultman \cite{EU} computed the homology groups of the manifolds appearing in the equivariant classification. By Corollary~\ref{C:LDim}, the classification of closed, cohomogeneity one topological manifolds is complete in dimension $n\leq 4$. In dimensions $5$, $6$ and $7$, however, it follows from Corollary~\ref{C:SLICES} that Hoelscher's results do not yield a classification in the topological category. Our second theorem completes the equivariant classification of closed, cohomogeneity one topological manifolds in these dimensions. 


\begin{theorem}
\label{T:CLASSIF}
Let $\M$ be a closed, simply connected topological $n$-manifold, $n\leq 7$, with an (almost) effective cohomogeneity one action of a compact connected Lie group $G$. If the action is non-smoothable, then it is given by one of the diagrams in Table~\ref{TB:GROUP_DGMS} and $\M$ can be exhibited as one of the manifolds in this table. 
\end{theorem}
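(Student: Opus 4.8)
The plan is to combine the structural restrictions forced by non-smoothability with Hoelscher's smooth classification \cite{Ho1,Ho2}. First I would invoke Corollary~\ref{C:SLICES}: since the action is non-smoothable, at least one singular orbit, say $\G/\K^{+}$, has codimension $4$ and its slice is homeomorphic to the cone on $\PS^3$, so that $\K^{+}/\HH\approx\PS^3$. Next I would pin down the admissible dimensions. Near $\G/\K^{+}$ the manifold $\M$ is locally modelled on $\R^{\dim \G/\K^{+}}\times \mathring{C}(\PS^3)$, and the cone--join identity gives $\R^{k}\times \mathring{C}(\PS^3)\approx \mathring{C}(\Susp^{k}\PS^3)$. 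By the double suspension theorem of Edwards and Cannon \cite{Ca,Ed}, $\Susp^{k}\PS^3\approx S^{k+3}$ exactly when $k\geq 2$; since $\Susp\PS^3$ is not a manifold, the case $k=\dim\G/\K^{+}=1$ is excluded. Hence $\dim\G/\HH=n-1\geq 5$, which forces $n\in\{6,7\}$ and shows, together with Corollary~\ref{C:LDim}, that there are no non-smoothable examples for $n\leq 5$.

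The second step is to determine the possible group data. Because $\PS^3$ is, by Bredon's theorem \cite{Br}, a homogeneous space of a compact connected Lie group only as $\SU(2)/\I^{*}$ or $\SO(3)/\I$, the effective transitive $\K^{+}$-action on $\K^{+}/\HH\approx\PS^3$ has identity component $\SU(2)$ or $\SO(3)$, and $\HH$, modulo the ineffective kernel of this slice action, is the binary icosahedral group $\I^{*}$ or the icosahedral group $\I$. This strongly constrains how $\SU(2)$ or $\SO(3)$ embeds in $\G$; and since $\HH\subset\K^{-}$ as well, it constrains $\K^{-}$ and the remaining slice $\K^{-}/\HH$, which by Theorem~\ref{T:STRUCTURE} is either a sphere or again $\PS^3$. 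Using these restrictions together with the structural reductions underlying Hoelscher's classification \cite{Ho1,Ho2}, I would enumerate, up to equivalence, all admissible group diagrams $(\G,\HH,\K^{-},\K^{+})$ in dimensions $6$ and $7$. Simple connectivity is then imposed through the van~Kampen presentation in which $\pi_1(\M)$ is the quotient of $\pi_1(\G/\HH)$ by the normal subgroup generated by the images of $\pi_1(\K^{-}/\HH)$ and $\pi_1(\K^{+}/\HH)$; here the $\PS^3$-slice contributes the image of $\pi_1(\PS^3)\cong\I^{*}$, and I would discard every diagram that fails to yield a simply connected space.

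The final step is to identify, for each surviving diagram, the homeomorphism type of the resulting manifold. Here I would reconstruct $\M$ from its decomposition in Theorem~\ref{T:STRUCTURE}, compute its (co)homology by Mayer--Vietoris, and use the double suspension theorem to resolve the singular cone directions into genuine spheres, thereby exhibiting each $\M$ as a smooth model (a sphere, a sphere bundle, or another standard space) and proving the smoothability claim of the theorem. Assembling these diagrams and identifications produces Table~\ref{TB:GROUP_DGMS}. I expect the main obstacle to be the exhaustiveness and non-redundancy of the enumeration in the second step: one must verify that every admissible embedding of $\SU(2)$ or $\SO(3)$ in each candidate $\G$ has been accounted for, that no two diagrams describing equivariantly homeomorphic manifolds are listed twice, and that each retained diagram genuinely satisfies the simple-connectivity and local manifold conditions established in the first step.
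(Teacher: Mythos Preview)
Your first step contains a concrete error that causes you to miss the dimension~$5$ case entirely. You correctly write $\R^{k}\times \mathring{C}(\PS^3)\approx \mathring{C}(\Susp^{k}\PS^3)$ with $k=\dim\G/\K^{+}$, and you correctly note that $\Susp\PS^3$ is neither a sphere nor a manifold. But the inference ``$\Susp\PS^3$ is not a manifold, hence $\mathring{C}(\Susp\PS^3)$ is not locally Euclidean'' is false: since the open cone on a space $Y$ is $\Susp Y$ with one cone point removed, the very Double Suspension Theorem you invoke gives
\[
\mathring{C}(\Susp\PS^3)\ \approx\ \Susp^{2}\PS^3\setminus\{\mathrm{pt}\}\ \approx\ \SP^5\setminus\{\mathrm{pt}\}\ \approx\ \R^5.
\]
Equivalently, Edwards' polyhedral criterion (Theorem~\ref{T:EDWARDS_POLY}) requires only that links be simply connected with the homology of a sphere, not that they be spheres; $\Susp\PS^3$ satisfies both. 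Thus $k=1$ is allowed, $n=5$ is \emph{not} excluded, and Table~\ref{TB:GROUP_DGMS} does contain the $5$-dimensional diagram $(\RS^3\times\RS^1,\ \I^*\times\Z_k,\ \I^*\times\RS^1,\ \RS^3\times\Z_k)$ with $\M\approx\PS^3\ast\SP^1\approx\SP^5$, which your argument would omit.

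Apart from this, your outline is broadly aligned with the paper's approach, but it underestimates the concrete group theory needed. The paper fixes $\K^{+}/\HH\approx\PS^3$, uses Propositions~\ref{L:DIM_BOUND} and~\ref{P:BOUND_DIMH} together with Lemma~\ref{P:ABELIAN_G} to list the candidate groups $\G$ in each of dimensions $5$, $6$, $7$, and then exploits the numerical constraint $\dim\K^{+}=\dim\HH+3=\dim\G-(n-4)$ to discard every $\G$ lacking a connected subgroup of that dimension; this, rather than a general appeal to Hoelscher, is what cuts the list down to $\RS^3\times\RS^1$, $\RS^3\times\RS^3$, and $\RS^3\times\RS^3\times\RS^1$. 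The diagrams are then pinned down using Lemma~\ref{L:P3_TRANS} and the simple-connectivity criterion of Lemma~\ref{L:SIMP_CONN_COND}, and the manifolds are identified by exhibiting explicit join, product, or associated-bundle actions with the same diagrams, rather than via Mayer--Vietoris.
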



\begin{table}[!htbp]
\begin{center}
\small{
\begin{tabular}{p{1.7cm}p{6.9cm}l}\toprule
Dimension 	& Diagram  & Manifold \\
\midrule
$5$	& $(\RS^3\times \RS^1,  \I^*\times \mathbb{Z}_k,  \I^*\times \RS^1, \RS^3\times \mathbb{Z}_k)$	&  $\PS^3\ast \SP^1\approx \SP^5$ \\ \mytableextraspace
\midrule
$6$	& $(\RS^3\times \RS^3, \I^*\times \RS^1,  \RS^3\times \RS^1, \RS^3\times \RS^1)$ 	& $\Susp\PS^3 \times \SP^2$    \\ 
	& 																				& $\approx\SP^4 \times \SP^2$    \\ \mytableextraspace

	 &$(\RS^3\times \RS^3,   \RS^1\times \I^*,   \RS^3\times \I^*, \RS^1\times \RS^3)$ &$\PS^3\ast \SP^2\approx \SP^6$  \\ \mytableextraspace
\midrule
7 	&  $(\RS^3\times \RS^3, \I^*\times 1,  \I^*\times \RS^3, \RS^3\times 1)$ 							& $\PS^3\ast \SP^3\approx \SP^7$ \\ \mytableextraspace
	&$(\RS^3\times \RS^3,   \Delta \I^*,   \I^*\times \RS^3, \Delta \RS^3)$ 							& $\PS^3\ast \SP^3\approx \SP^7$\\  \mytableextraspace
	&$(\RS^3\times \RS^3, \I^*\times  \mathbb{Z}_k,   \I^*\times \RS^1, \RS^3\times \mathbb{Z}_k)$ 	& $\SP^5$ bundle over $\SP^2$\\  \mytableextraspace
	& $(\RS^3\times \RS^3,    \I^*\times \I^*,   \RS^3\times \I^*,  \I^*\times \RS^3)$ 					&$\PS^3\ast \PS^3\approx \SP^7$\\ \mytableextraspace
	&$(\RS^3\times \RS^3,  \I^*\times 1, \RS^3\times 1, \RS^3\times 1)$ 											& $\Susp\PS^3 \times \SP^3$\\ \mytableextraspace
		& 																						& $\approx\SP^4 \times \SP^3$    \\ \mytableextraspace
	&$(\RS^3\times \RS^3, \Delta \I^*,  \Delta \RS^3, \Delta\RS^3)$ 								& $ \Susp  \PS^3 \times \SP^3$\\ \mytableextraspace
	& 																							& $\approx\SP^4 \times \SP^3$    \\ \mytableextraspace
	&$(\RS^3\times \RS^3\times \RS^1, \I^*\times \RS^1\times \mathbb{Z}_k,   \I^*\times \T^2, \RS^3\times \RS^1\times \mathbb{Z}_k)$&  $\SP^5$ bundle over $\SP^2$\\
\bottomrule
\end{tabular}
}
\end{center}
\caption{\label{TB:GROUP_DGMS}Non-smoothable cohomogeneity one actions in dimensions $5$, $6$ and $7$}
\end{table}

The proof of Theorem~\ref{T:CLASSIF} follows the outline of the proofs of the classification in the smooth case (see, for example, \cite{Ho2}). After determining the admissible group diagrams, we can write the manifolds as joins, products or bundles in terms of familiar spaces. The problem still remains whether the topological manifolds in Table~\ref{TB:GROUP_DGMS} are smoothable. One can quickly settle this question for the joins in Table~\ref{TB:GROUP_DGMS} since, by the Double Suspension Theorem, these manifolds are homeomorphic to spheres and therefore they are smoothable. The situation for the products $\Susp \PS^3\times \SP^2$ and $\Susp\PS^3\times \SP^3$ in Table~\ref{TB:GROUP_DGMS}, where $\Susp \PS^3$ denotes the suspension of $\PS^3$, is more delicate. For example, the $6$-dimensional product $\Susp \SP^3\times \SP^2$ is homotopy equivalent to $\SP^4\times \SP^2$.   By the classification of closed, oriented simply connected $6$-dimensional topological manifolds with torsion free homology, carried out by Wall \cite{Wa}, Jupp \cite{J} and Zhubr \cite{Zh}, there exist infinitely many homeomorphism types for a homotopy $\SP^4\times \SP^2$, parametrized by a nonnegative integer $k$. For $k$ even, the corresponding homeomorphism type is smoothable; for $k$ odd, the corresponding homeomorphism type is non-smoothable (see Section~\ref{S:TOP_MFT}). Our third theorem 
settles the smoothability of $\Susp \PS^3\times \SP^2$.



\begin{theorem} The manifold $\Susp \PS^3\times \SP^2$ is homeomorphic to $\SP^4\times\SP^2$.
\label{T:TOP_MFT} 
\end{theorem}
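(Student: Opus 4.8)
The plan is to show that $\M:=\Susp\PS^3\times\SP^2$ is a closed topological manifold carrying the expected homotopy type, and then to pin down its position in the Wall--Jupp--Zhubr classification by computing its characteristic classes together with its Kirby--Siebenmann invariant. First I would verify that $\M$ is a closed, simply connected topological $6$-manifold with torsion-free homology. Away from the two suspension points the space is the genuine product $\PS^3\times\R\times\SP^2$, which is smooth; the only questionable local models are those of a suspension point crossed with $\SP^2$, namely $\mathring{c}(\PS^3)\times\R^2$. Using the identity $\mathring{c}(X)\times\mathring{c}(Y)\cong\mathring{c}(X\ast Y)$ together with $\R^2\cong\mathring{c}(\SP^1)$ and the double suspension theorem $\PS^3\ast\SP^1=\Susp^2\PS^3\approx\SP^5$, these models become $\mathring{c}(\SP^5)\cong\R^6$, so $\M$ is a topological manifold. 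Since $\Susp\PS^3$ is simply connected with the integral homology of $\SP^4$, it is homotopy equivalent to $\SP^4$, and therefore $\M\simeq\SP^4\times\SP^2$; in particular $\M$ is simply connected, oriented, spin (so $w_2(\M)=0$), and has torsion-free cohomology with the ring structure of $\SP^4\times\SP^2$.

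Next I would invoke the classification of Wall, Jupp, and Zhubr in its topological refinement. Within the homotopy type of $\SP^4\times\SP^2$, a spin representative is determined up to homeomorphism by its first Pontryagin class $p_1\in H^4(\M;\Z)\cong\Z$ (this is the integer $k$ of the introduction) together with its Kirby--Siebenmann invariant; moreover, since $\dim\M=6$ and $\pi_i(\mathrm{TOP}/O)=0$ for $i=4,5$, the class $\mathrm{ks}(\M)\in H^4(\M;\Z/2)\cong\Z/2$ is the only obstruction to smoothing $\M$. It therefore suffices to prove $p_1(\M)=0$ and $\mathrm{ks}(\M)=0$, which forces $\M$ to be the standard, smoothable product.

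For $\mathrm{ks}(\M)$ I would argue by localization at the singular strata. The product smooth structure on $\M\setminus(\{N,S\}\times\SP^2)$ extends over the complement of each suspension stratum separately, so $\mathrm{ks}(\M)$ is the sum of two local contributions, one for $\{N\}\times\SP^2$ and one for $\{S\}\times\SP^2$. Both strata have the same link $\PS^3$ and are interchanged by the suspension involution, so the two contributions coincide in $H^4(\M;\Z/2)\cong\Z/2$, and their sum is therefore $0$. For $p_1(\M)$ I would evaluate on the generator $\beta\in H_4(\M)$ Poincar\'e dual to the $\SP^2$-class $b\in H^2(\M)$. Representing $\beta$ by a characteristic submanifold $F$, which is a homotopy $\SP^4$ whose normal bundle has self-intersection $\langle b^2,[\M]\rangle=0$, the splitting $p_1(T\M)|_F=p_1(TF)+p_1(\nu_F)$ together with the topological signature theorem ($p_1(TF)=3\sigma(F)=0$) and $p_1(\nu_F)=e(\nu_F)^2=0$ yields $\langle p_1(\M),\beta\rangle=0$, hence $p_1(\M)=0$.

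The main obstacle is the rigorous handling of the invariants at and near the non-smooth locus. The restriction map $H^4(\M)\to H^4(\M\setminus(\{N,S\}\times\SP^2))$ vanishes, so $p_1$ and $\mathrm{ks}$ are entirely concentrated at the two cone strata, where the only available chart to $\R^6$ is the inherently non-smooth one coming from the double suspension. Making precise both the additivity and the cancellation of the two Kirby--Siebenmann contributions, and realizing $\beta$ by a locally flat characteristic submanifold on which the normal-bundle splitting applies, is the delicate part; once $\mathrm{ks}(\M)=0$ is established one may pass to a smooth structure and carry out the $p_1$-computation smoothly, after which the smooth Wall--Jupp--Zhubr classification identifies $\M$ with $\SP^4\times\SP^2$.
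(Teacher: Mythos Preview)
Your overall framework—reduce to the Wall--Jupp--Zhubr invariants and compute $p_1$ together with the Kirby--Siebenmann class—is exactly the one the paper uses. The gap is in your computation of $p_1$. You assert that a characteristic submanifold $F^4\subset\M$ dual to $b$ ``is a homotopy $\SP^4$'', hence has $\sigma(F)=0$. This cannot be deduced from the homotopy type of $\M$: the Jupp list contains, for every $k\in\Z$, a smooth $6$-manifold $\M_k\simeq\SP^4\times\SP^2$ with $p_1(\M_k)=48k$, and for $k\neq 0$ the identity $\langle p_1(\M_k),[F]\rangle=3\sigma(F)+\langle e(\nu_F)^2,[F]\rangle=3\sigma(F)$ forces $\sigma(F)=16k\neq 0$. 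So in this homotopy type the statement ``$\sigma(F)=0$'' is \emph{equivalent} to ``$p_1=0$'', and your argument is circular. The obvious candidate that \emph{is} a homotopy $\SP^4$, namely $\Susp\PS^3\times\{\mathrm{pt}\}$, is not a submanifold, so you cannot plug it into a tangent/normal splitting without first passing to $L$-classes of rational homology manifolds---which is precisely what the paper does.

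The paper computes $p_1$ by exploiting the product structure directly at the level of rational homology manifolds: $\Susp\PS^3\cong\SP^4/\I^*$, the Thom--Hirzebruch $L$-classes are multiplicative, so $l_1(\M)=l_1(\Susp\PS^3)\times 1$, and for the $4$-dimensional quotient $\SP^4/\I^*$ the top $L$-class equals its signature, which the paper evaluates via the Atiyah--Singer $\G$-signature formula and the Atiyah--Bott fixed-point theorem. This yields $p_1(\M)=0$. No separate $\mathrm{ks}$ computation is then needed: Jupp's relation together with the homotopy equivalence to $\SP^4\times\SP^2$ already forces $T\equiv p/24\pmod 2$, so $p=0$ gives $\Delta(\M)=0$ automatically. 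Your suspension-involution symmetry argument for $\mathrm{ks}$ is a nice idea and can be made rigorous (the relative obstruction in $H^4(\M,\M_0;\Z/2)\cong\Z/2\oplus\Z/2$ is fixed by an involution swapping the summands, hence diagonal, hence maps to $0$), but it does not by itself get you to $p_1$; you still need an honest computation using the specific geometry of $\Susp\PS^3$.
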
 

The proof of Theorem~\ref{T:TOP_MFT} is an application of the classification of closed, oriented simply connected topological $6$-manifolds with torsion free homology, and essentially reduces to computing the first Pontryagin class of \linebreak $\Susp \PS^3\times \SP^2$ (see Section~\ref{S:TOP_MFT}). To do this, we use results  of Zagier \cite{Zag}, Atiyah and Singer's $\G$-signature theorem \cite{AS} and a signature formula of Atiyah and Bott \cite{AB}. 

Observe that $\Susp\PS^3\times\SP^3$ is the total space of a principal $\RS^1$-bundle over $\Susp\PS^3\times\SP^2$. By Theorem~\ref{T:TOP_MFT}, $\Susp\PS^3\times\SP^2\approx \SP^4\times\SP^2$. Hence, $\Susp\PS^3\times\SP^3$ is smoothable and, since the Euler class of the bundle is a generator of $H^2(\SP^4\times\SP^2)$, we obtain the following result. 

\begin{corollary} The manifold $\Susp \PS^3\times \SP^3$ is homeomorphic to $\SP^4\times\SP^3$.
\end{corollary}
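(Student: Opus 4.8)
The plan is to realize $\Susp\PS^3\times\SP^3$ as the total space of a principal $\RS^1$-bundle over $\Susp\PS^3\times\SP^2$, transport the base to $\SP^4\times\SP^2$ by Theorem~\ref{T:TOP_MFT}, and then identify the total space using the classification of principal circle bundles by their Euler class. First I would observe that the Hopf fibration $\SP^3\to\SP^2$ is a principal $\RS^1$-bundle, so taking its product with the identity on $\Susp\PS^3$ exhibits $\Susp\PS^3\times\SP^3$ as a principal $\RS^1$-bundle over $\Susp\PS^3\times\SP^2$; concretely, this bundle is the pullback of the Hopf bundle along the projection $\Susp\PS^3\times\SP^2\to\SP^2$. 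By Theorem~\ref{T:TOP_MFT} the base of this bundle is homeomorphic to $\SP^4\times\SP^2$.

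Next I would compute the relevant cohomology and the Euler class. Since $\PS^3$ is an integral homology $3$-sphere, $\Susp\PS^3$ has the integral cohomology of $\SP^4$, and the K\"unneth theorem then gives $H^2(\Susp\PS^3\times\SP^2;\Z)\cong\Z$, generated by the pullback of a generator of $H^2(\SP^2;\Z)$. The Euler class of our bundle is precisely this pullback, because the Euler class of the Hopf bundle generates $H^2(\SP^2;\Z)$; hence it is a generator of $H^2(\Susp\PS^3\times\SP^2;\Z)$. The homeomorphism supplied by Theorem~\ref{T:TOP_MFT} induces an isomorphism between $H^2(\SP^4\times\SP^2;\Z)$ and $H^2(\Susp\PS^3\times\SP^2;\Z)$, both infinite cyclic, so the Euler class corresponds to a generator of $H^2(\SP^4\times\SP^2;\Z)$.

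Finally I would invoke the classification of principal $\RS^1$-bundles over a (paracompact, CW-type) space by their Euler class in $H^2(-;\Z)$. Over $\SP^4\times\SP^2$ the bundle with generating Euler class is the pullback of the Hopf bundle along $\SP^4\times\SP^2\to\SP^2$, whose total space is $\SP^4\times\SP^3$. Since homeomorphic bases carrying matched Euler classes have homeomorphic total spaces, I would conclude that $\Susp\PS^3\times\SP^3\approx\SP^4\times\SP^3$.

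The main obstacle is not in any of these formal steps—given Theorem~\ref{T:TOP_MFT} they are routine—but in ensuring that the Euler class is transported to a \emph{generator} under the homeomorphism of bases. This is exactly where the infinite cyclicity of $H^2$ is essential: any isomorphism sends a generator to $\pm$ a generator, and the sign is immaterial, since conjugating the $\RS^1$-action produces a homeomorphic total space. In effect, all the genuine difficulty has already been absorbed into Theorem~\ref{T:TOP_MFT}, and the corollary is a clean consequence of the Hopf-bundle structure together with the classification of circle bundles.
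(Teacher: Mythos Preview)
Your proposal is correct and follows essentially the same approach as the paper: realize $\Susp\PS^3\times\SP^3$ as a principal $\RS^1$-bundle over $\Susp\PS^3\times\SP^2$ via the Hopf fibration, apply Theorem~\ref{T:TOP_MFT} to identify the base with $\SP^4\times\SP^2$, observe that the Euler class is a generator of $H^2\cong\Z$, and conclude via the classification of circle bundles. The paper sketches this in a single sentence preceding the corollary, while you have written out the details (pullback naturality of the Euler class, why a generator maps to a generator) more carefully.
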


Let $\M$ be the total space of a topological $\SP^5$-bundle over $\SP^2$. Since $H^4(\M,\Z_2)=0$, the Kirby-Siebenmann class of $\M$ vanishes, so $\M$ admits a PL structure (see \cite{KSi}). Since, in dimensions $n\leq 7$, every PL $n$-manifold admits at least one compatible smooth structure (see \cite{HM,KM,M} or \cite[p.~66]{We}), it follows that $\M$ is smoothable. Thus, the homeomorphisms in the third column of Table~\ref{TB:GROUP_DGMS}, combined with Corollary~\ref{C:LDim}, yield the following result. 


\begin{corollary}
A  closed, simply connected topological $n$-manifold of cohomogeneity one is homeomorphic to a smooth manifold, provided $n\leq 7$.
\end{corollary}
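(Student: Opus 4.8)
The plan is to reduce the statement to a finite check against the classification already established, splitting the argument into the ranges $n\le 4$ and $5\le n\le 7$. For $n\le 4$ there is nothing further to do beyond invoking Corollary~\ref{C:LDim}: every cohomogeneity one action on a topological $n$-manifold with $n\le 4$ is smoothable, and in particular such an $\M$ is homeomorphic to a smooth manifold.

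Now suppose $5\le n\le 7$. If the action on $\M$ is smoothable, then by definition $\M$ is equivariantly homeomorphic to a smooth manifold and we are done, so I would assume the action is non-smoothable. By Theorem~\ref{T:CLASSIF}, $\M$ is then equivariantly homeomorphic to one of the spaces in the third column of Table~\ref{TB:GROUP_DGMS}, and it suffices to show that each such space is homeomorphic to a smooth manifold. These spaces fall into three families, which I would treat in turn. First, the joins $\PS^3\ast\SP^1$, $\PS^3\ast\SP^2$, $\PS^3\ast\SP^3$ and $\PS^3\ast\PS^3$ are, by the Double Suspension Theorem of Edwards and Cannon, homeomorphic to the standard spheres $\SP^5$, $\SP^6$ and $\SP^7$, which are smooth. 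Second, the products $\Susp\PS^3\times\SP^2$ and $\Susp\PS^3\times\SP^3$ are homeomorphic to $\SP^4\times\SP^2$ and $\SP^4\times\SP^3$ by Theorem~\ref{T:TOP_MFT} and its corollary, and these are manifestly smooth. Third, the total space $\M$ of an $\SP^5$-bundle over $\SP^2$ satisfies $H^4(\M,\Z_2)=0$, so its Kirby--Siebenmann obstruction vanishes and $\M$ admits a PL structure; since in dimensions $n\le 7$ every PL manifold carries a compatible smooth structure, $\M$ is smoothable. As every entry of Table~\ref{TB:GROUP_DGMS} is thereby accounted for, the proof is complete.

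The genuinely substantive input is the identification of the two product manifolds, which is precisely the content of Theorem~\ref{T:TOP_MFT}; everything else follows directly from the Double Suspension Theorem or from the Kirby--Siebenmann/PL-smoothing machinery. Consequently, once Theorems~\ref{T:STRUCTURE}, \ref{T:CLASSIF} and \ref{T:TOP_MFT} are in hand, the main obstacle to this corollary is not a new analytic difficulty but the bookkeeping of confirming that the classification in Table~\ref{TB:GROUP_DGMS} is exhaustive, so that no non-smoothable homeomorphism type is overlooked; the delicate cases, the products $\Susp\PS^3\times\SP^2$ and $\Susp\PS^3\times\SP^3$, are exactly those that Theorem~\ref{T:TOP_MFT} was designed to resolve.
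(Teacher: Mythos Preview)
Your proposal is correct and follows essentially the same approach as the paper: split into $n\le 4$ (Corollary~\ref{C:LDim}) and $5\le n\le 7$ (Theorem~\ref{T:CLASSIF}), then verify smoothability for each entry of Table~\ref{TB:GROUP_DGMS} via the Double Suspension Theorem for the joins, Theorem~\ref{T:TOP_MFT} and its corollary for the products, and the Kirby--Siebenmann/PL-smoothing argument for the $\SP^5$-bundles over $\SP^2$. The paper's discussion preceding the corollary is exactly this argument.
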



Our paper is organized as follows. In Section~\ref{S:Preliminaries} we discuss Mostert's article \cite{Mo} and prove Theorem~\ref{T:STRUCTURE}. We prove Corollary~\ref{C:EQUIV_ALEX_METRIC} in Section~\ref{S:EQ_ALEX_METRICS}. In Section~\ref{S:TOOLS} we collect some results on cohomogeneity one topological manifolds that we will use in the proof of Theorem~\ref{T:CLASSIF}. Sections \ref{S:CLASSIF} and \ref{S:TOP_MFT} contain, respectively, the proofs of Theorems~\ref{T:CLASSIF} and \ref{T:TOP_MFT}. 


\begin{ack} M.~Zarei thanks the Institut f\"ur Algebra und Geometrie at the Karlsruher Institut f\"ur Technologie (KIT) for its hospitality while the work presented herein was carried out. Both authors would like to thank Anand Dessai, Marco~Radeschi and Alexey V.~Zhubr  for helpful conversations on the proof of Theorem~\ref{T:TOP_MFT}, and Martin Herrmann, Wilderich Tuschmann, and Burkhard Wilking for conversations on the smoothability of manifolds. The authors also thank Martin Kerin and Wolfgang Ziller for suggesting improvements to the exposition. M.~Zarei was partially supported by the Ministry of Science, Research and Technology of Iran.
\end{ack}


\section{Setup and proof of Theorem~\ref{T:STRUCTURE}}
\label{S:Preliminaries}


\subsection{Notation} Let $\M$ be a topological manifold and let $x$ be a point in $\M$.  Given a topological (left) action $\G\times \M\rightarrow \M$ of a Lie group $\G$, we let $\G(x)=\{\,gx \mid g\in \G \,\}$ be the \emph{orbit}  of $x$ under the action of $\G$. The \emph{isotropy group} of $x$ is the subgroup $\G_x=\{\, g\in \G \mid gx=x\,\}$. Observe that $\G(x)\approx \G/\G_x$. We will denote the orbit space of the action by $\M/\G$ and let  $\pi:\M\rightarrow \M/\G$ be the orbit projection map.  The \emph{(ineffective) kernel} of the action is the subgroup $\K=\bigcap_{x\in \M}\G_x$. The action is \emph{effective} if $\K$  is the trivial subgroup $\{e\}$ of $\G$; the action is \emph{almost effective} if $\K$ is finite.

We will say that two $\G$-manifolds are \emph{equivalent} if they are equivariantly homeomorphic. From now on, we will suppose that $\G$ is compact and assume that the reader is familiar with the basic notions of compact transformation groups (see, for example, Bredon \cite{Bredon}). We will assume all manifolds to be connected. 


As for locally smooth actions (see \cite[Ch.~IV, Section 3]{Br}), for a topological action of $\G$ on $\M$ there also exists a maximum orbit type $\G/\HH$, i.e. $\HH$ is conjugate to a subgroup of each isotropy group. One sees this as follows. Let $\M_0$ be the set of points with isotropy group of smallest dimension and least number of components. By work of Montgomery and Yang \cite{MY}, $\M_0$ is an open, dense and connected subset of $\M$. On the other hand, by work of Montgomery and Zippin \cite{MZ},  for every $x\in \M$ there is a neighborhood $V$ such that $\G_y$ is conjugate to a subgroup of $\G_x$ for $y\in V$. It then follows from the connectedness of $\M_0$ that the isotropy groups $\G_y$, $y\in \M_0$ are conjugate to each other. By the density of $\M_0$ and the existence of the neighborhood $V$, each group $\G_y$, for  $y\in \M_0$, is conjugate to a subgroup of every isotropy group. Therefore, the orbit type $\G/\G_y$, for $y\in M_0$, is maximal. We call this orbit type the \emph{principal orbit type} and orbits of this type \emph{principal orbits}.

A \emph{homology sphere} is a closed  topological $n$-manifold $\M^n$ that is an integral homology sphere, i.e.~$H_*(\M^n,\Z)\cong H_*(\SP^n,\Z)$. We will denote the suspension of a topological space $\X$ by $\Susp \X$ and the join of $\X$ with a topological space $\Y$ by $\X\ast \Y$. Recall that $\Susp \X\approx  \X \ast \SP^0$ and, in general, $\Susp^n \X \approx \X\ast \SP^{n-1}$ for $n\geq 1$.

We will denote the Poincar\'e homology sphere by $\PS^3$; it is homeomorphic to the homogeneous spaces $\SU(2)/\I^*$ and $\SO(3)/\I$, where $\I^*$ is the binary icosahedral group and $\I$ is the icosahedral group. We will use some basic concepts of piecewise-linear topology in the proof of Theorem~A. We refer the reader to \cite{RS} for the relevant definitions.  


\subsection{Cohomogeneity one topological manifolds}

In this subsection we collect basic facts on cohomogeneity one topological manifolds, discuss the omission in Mostert's work \cite{Mo} that gave rise to the present article, and prove some preliminary results that we will use in the proof of Theorems~\ref{T:STRUCTURE} and \ref{T:CLASSIF}.


\begin{defn}
Let $\M$ be a topological $n$-manifold with a topological action of a compact connected Lie group  $\G$. The action is of \emph{cohomogeneity one} if the orbit space is one-dimensional or, equivalently, if there exists an orbit of dimension $n-1$. A topological manifold with a topological action of cohomogeneity one is a \emph{cohomogeneity one manifold}.
\end{defn}

By \cite[Theorem~1]{Mo}, the orbit space of a cohomogeneity one manifold is homeomorphic to a circle, an open interval, a half open interval or a closed interval $[-1,+1]$.  We refer to orbits which map to endpoints as \emph{singular}. We call the isotropy groups of points in these orbits \emph{singular} isotropy groups.  When the orbit space is homeomorphic to $[-1,+1]$, we denote a singular isotropy group corresponding to a point in the orbit $\pm1$ by $\K^\pm$. Orbits that are not singular are called \emph{regular orbits}; they all have the same isotropy group $\HH$ and project to interior points of the orbit space. The subgroup $\HH$ is called the \emph{principal} isotropy group

As indicated in the introduction, the oversight in \cite{Mo} stems from the claim that a homology sphere that is a homogeneous space must be a standard sphere. More precisely, in \cite[Sections 2 and 4]{Mo} Mostert shows that $\K/\HH$, where $\K$  is a singular isotropy group,  must be a homology sphere and a homogeneous space (see~\cite[Lemma~2 and proof of Theorem~2]{Mo}) and concludes, erroneously, that $\K/\HH$ must be a standard sphere (see~\cite[Section 2, Corollary]{Mo}). This, as explained in the introduction, is not the case.
 The following result of Bredon \cite{Br} implies that the Poincar\'e homology sphere $\PS^3$ and  standard spheres are the only possibilities for  $\K/\HH$.


\begin{thm}[Bredon]
\label{T:BREDON_P3}
Let $\G$ be a compact Lie group and $\HH$ a closed subgroup of $\G$.
\begin{itemize}
 \item[(1)] If $\G/\HH$ is a homology $k$-sphere, then $\G/\HH$ is homeomorphic to either $\SP^k$ or to the Poincar\'e homology sphere $\PS^3$. 
\item[(2)] If $\G$ acts almost effectively and transitively on $\PS^3$, then $\G$ is isomorphic to $\SU(2)$ or $\SO(3)$, with $\I^*$ or $\I$ as the isotropy group, respectively.
\end{itemize}
\end{thm}

 The following example shows that there are cohomogeneity one topological manifolds with $\K/\HH\approx \PS^3$.


\begin{example}
\label{E:C1P3}
Let  $\RS^3\times \SO(n+1)$,  $n\geq 1$,  act on $\PS^3\ast \SP^n$ as the join action of the standard transitive actions of $\RS^3\cong \SU(2)$ on $\PS^3$ and  $\SO(n+1)$ on $\SP^n$. The orbit space is homeomorphic to $[-1,+1]$ and  $\K^+=\RS^3\times  \SO(n)$, $\K^-=\I^* \times  \SO(n+1)$ and $\HH=\I^* \times \SO(n)$. Thus $\K^+/\HH=\PS^3$. By the Double Suspension Theorem, $\Susp^2 \PS^3\approx \SP^5$ and it follows that $\PS^3\ast \SP^n\approx\Susp^{n+1} \PS^3$ is homeomorphic to $\SP^{n+4}$.
\end{example}

Taking into account Theorem~\ref{T:BREDON_P3}, the comments preceding it, and Example~\ref{E:C1P3}, we conclude that one must amend \cite[Theorem~4]{Mo} (considering the corrections in the Errata to \cite{Mo}) by adding $\PS^3$ as a second possibility for $\K/\HH$ in items (iii) and (iv) in \cite[Theorem~4]{Mo}.
There is a fifth item in \cite[Theorem~4]{Mo}:


\begin{claim}
\label{C:ITEM_V} 
The action of the group $\G$ on a space with structure as in \cite[Theorem~4]{Mo} is equivalent to a cohomogeneity one $\G$-action on the manifold $\M$ and, conversely, a space $\M$ constructed in such a way is a  topological manifold with a cohomogeneity one action of $\G$. 
\end{claim}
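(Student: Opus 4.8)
The plan is to separate the two assertions; the substance lies in the converse, namely that the space $\M$ assembled from a group diagram $(\G,\HH,\K^-,\K^+)$ with $\K^\pm/\HH$ homeomorphic to a sphere or to $\PS^3$ (and $\dim\G/\HH\geq 4$ in the latter case, cf.~Theorem~\ref{T:BREDON_P3}) is a cohomogeneity one topological manifold. First I would fix the model: since $\G\to\G/\K^\pm$ is a locally trivial bundle and $\K^\pm$ acts on the closed cone $C(\K^\pm/\HH)$ fixing the vertex, the associated bundles $\G\times_{\K^\pm}C(\K^\pm/\HH)$ are well-defined $\G$-spaces with boundary $\G\times_{\K^\pm}(\K^\pm/\HH)\approx\G/\HH$. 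Gluing the two along this common boundary by a $\G$-equivariant homeomorphism of $\G/\HH$ produces $\M$, and left translation gives the $\G$-action. Reading off the isotropy groups from the construction shows that the action is (almost) effective with principal isotropy $\HH$ and singular isotropy groups $\K^\pm$, and that the two cone parameters glue to an orbit space homeomorphic to $[-1,+1]$; this is the cohomogeneity one structure. The equivalence assertion then reduces to noting that this reconstruction recovers the given $\G$-space up to equivariant homeomorphism, which follows from the topological slice theorem underlying Theorem~\ref{T:STRUCTURE}(1).

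It remains to verify that $\M$ is a topological manifold, and this is where I expect the real work. Away from the two singular orbits, $\M$ is locally homeomorphic to $\G/\HH\times(-1,+1)$ and is therefore a manifold, since the homogeneous space $\G/\HH$ is smooth; near the gluing locus the two collars assemble into such a product. Thus the only points in question lie on a singular orbit $\G/\K^\pm$. Using local triviality of $\G\to\G/\K^\pm$, I would identify a neighborhood of such a point with $\R^{m}\times\mathring C(\K^\pm/\HH)$, where $m=\dim\G/\K^\pm$ and $\mathring C$ denotes the open cone, the vertex of the slice being the chosen orbit point.

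I would then evaluate this model through the cone--join identity $\R^m\times\mathring C(X)\approx\mathring C(S^{m-1})\times\mathring C(X)\approx\mathring C(S^{m-1}\ast X)=\mathring C(\Susp^m X)$. If $\K^\pm/\HH\approx\SP^k$, then $\Susp^m\SP^k\approx\SP^{m+k}$ and the model is $\mathring C(\SP^{m+k})\approx\R^{m+k+1}$, a manifold with no dimension restriction. If $\K^\pm/\HH\approx\PS^3$, the hypothesis $\dim\G/\HH\geq 4$ is exactly $m\geq 1$, and the model becomes $\mathring C(\Susp^m\PS^3)$. For every $m\geq 1$ this open cone is an open neighborhood of a suspension point of $\Susp^{m+1}\PS^3=\Susp(\Susp^m\PS^3)$, which by the Double Suspension Theorem \cite{Ca,Ed} is homeomorphic to $\SP^{m+4}$; being an open cone that is an open subset of a manifold, it is therefore a Euclidean $(m+4)$-ball. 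In all admissible cases the model is Euclidean of dimension $\dim\G/\HH+1$, so $\M$ is a topological manifold.

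The main obstacle is precisely the $\PS^3$ case along the singular orbits: although neither $C(\PS^3)$ nor $\Susp\PS^3$ is itself a manifold, suspending by the $m\geq 1$ orbit directions removes the obstruction via the Double Suspension Theorem, and the step ``an open cone in a manifold is a ball'' must be handled with care (compare Example~\ref{E:C1P3}). The bound $\dim\G/\HH\geq 4$ is sharp: when $m=0$, i.e.~$\dim\G/\HH=3$ and $\K^\pm=\G$, the slice model degenerates to $\mathring C(\PS^3)$, which is not a manifold, matching the exclusion of the low-dimensional case. The remaining verifications---continuity and well-definedness of the action, the collar gluing, and the identification of the orbit space with $[-1,+1]$---are routine.
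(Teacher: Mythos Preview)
Your argument is correct and reaches the same conclusion, but the paper takes a somewhat different and more compact route. Rather than working chart by chart, the paper observes that the glued space
\[
\X=\G\times_{\K^{-}}C(\K^{-}/\HH)\cup_{\G/\HH}\G\times_{\K^{+}}C(\K^{+}/\HH)
\]
is a finite polyhedron and then appeals to Edwards' recognition criterion (Theorem~\ref{T:EDWARDS_POLY}): a finite polyhedron is a closed topological $n$-manifold if and only if every link has the homology of $\SP^{n-1}$ and, for $n\geq 3$, every vertex link is simply connected. The link of a point on the singular orbit $\G/\K^{+}$ is $\SP^{n-5}\ast\PS^3$, which is a simply connected homology $(n-1)$-sphere once $n\geq 5$; this single observation finishes the proof. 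Your approach instead unpacks the local model as $\R^m\times\mathring C(\K^\pm/\HH)\approx\mathring C(\Susp^m\PS^3)$ via the cone--join identity and then invokes the Double Suspension Theorem directly to embed it as an open set in $\SP^{m+4}$. Both arguments rest on the same Edwards--Cannon theory, and your link computation $\Susp^m\PS^3=\SP^{m-1}\ast\PS^3$ is exactly the paper's $\SP^{n-5}\ast\PS^3$ (since $m=n-4$); the paper's version just packages the passage from ``simply connected homology-sphere links'' to ``manifold'' into one citable theorem, whereas yours makes the local Euclidean chart explicit. One small remark: you only need that the open cone is an open subset of a manifold, not that it is itself a ball, so the care you flag there is not actually needed for the conclusion.
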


This claim is true when all the $\K^\pm/\HH$ are spheres and follows as in \cite{Mo}. In the case where at least one of the $\K^\pm/\HH$ is homeomorphic to $\PS^3$, one must prove Claim~\ref{C:ITEM_V}.  We do this at the end of this section, in the case where $\M$ is closed (i.e.~compact and without boundary). This yields Theorem~\ref{T:STRUCTURE}. The remaining cases, where the orbit space is not compact, can be dealt with in an analogous way, and we leave this task to the interested reader. 

By item (iv) in \cite[Theorem~4]{Mo}, a cohomogeneity one $\G$ action on a closed topological manifold with orbit space an interval determines a group diagram 
\begin{equation*}
\xymatrix{ & \G    & \\
\K^{-} \ar[ru]^{j_{-}} & & \K^{+} \ar[lu]_{j_{+}}\\
& \HH  \ar[lu]^{i_{-}} \ar[ru]_{i_{+}}& }
\end{equation*}
 where $i_{\pm}$ and $j_{\pm}$ are the inclusion maps, $\K^{\pm}$ are the isotropy groups of the singular orbits at the endpoints of the interval, and $\HH$ is the principal isotropy group of the action.  We will denote this diagram by the $4$-tuple $(\G, \HH, \K^{-}, \K^{+})$. The inclusion maps are an important element in the group diagram, as illustrated by the following simple example: $(\T^2, \{e\}, \T^1, \T^1)$ determines both $\SP^3$ and $\SP^2\times \SP^1$, where in the first case the inclusion maps are to the first and second factors, respectively, of $\T^2$, and in the second case, both inclusion maps are the same (cf. \cite{Ne}). Now we prove Theorem~\ref{T:STRUCTURE}.


\subsection{Proof of Theorem~\ref{T:STRUCTURE}}
\label{S:PF_THM_STRUCT}
Let $\M^n$ be a closed topological $n$-manifold with an (almost) effective topological $\G$ action of cohomogeneity one with principal isotropy $\HH$. 
By item (i) in \cite[Theorem~4]{Mo}, the orbit space is homeomorphic to either a closed interval or to a circle.  Part (2) of Theorem~\ref{T:STRUCTURE} follows from item (ii) in \cite[Theorem~4]{Mo}.
 Therefore, we need only prove part (1) of Theorem~\ref{T:STRUCTURE}, where the orbit space $\M/\G$ is homeomorphic to a closed interval $[-1,+1]$. The ``if'' statement in this case corresponds to part (iv) of \cite[Theorem~4]{Mo} (keeping in mind that one must add $\PS^3$ as a possibility for $\K^\pm/\HH)$. Now we prove the ``only if'' statement. 

Let $(\G, \HH, \K^{-}, \K^{+})$ be a group diagram satisfying the hypotheses of part (1) of Theorem~\ref{T:STRUCTURE}. By the work of Mostert, we need only consider the case where at least one of $\K^{\pm}/\HH$ is the Poincar\'e sphere $\PS^3$. In this case, $n\geq 5$.

Suppose, without loss of generality, that  $\K^+/\HH=\PS^3$. Since $n\geq 5$, the singular orbit $\G/\K^+$ is at least one-dimensional.  Observe now that the space 
\begin{align}
\label{E:DBL_BDL}
 \X=\G\times_{\K^{-}}C(\K^{-}/\HH) \cup_{\G/\HH}\G\times_{\K^{+}} C(\K^{+}/\HH) 
\end{align}
is a finite polyhedron. Since the link of every point in the singular orbit $\G/\K^+$ is $\SP^{n-5}*\PS^3$, the following result  (see \cite[p.~742]{Wu}) implies that $X$ is a topological manifold: 

\begin{thm}[Edwards]
\label{T:EDWARDS_POLY}A finite polyhedron $P$ is a closed topological $n$-manifold if and only if the link of every vertex of $P$ is simply connected if $n\geq 3$, and the link of every point of $P$ has the homology of the $(n-1)$-sphere.
\end{thm}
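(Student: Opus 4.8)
The plan is to treat the two implications separately, with essentially all of the depth in the converse. Throughout I use two standard facts about a triangulated polyhedron $P$: a point $x$ in the interior of a $k$-simplex $\sigma$ has a neighborhood homeomorphic to the product $\R^{k}\times C(\mathrm{lk}\,\sigma)$ of a Euclidean factor with the open cone on the link of $\sigma$, and the link of $x$ as a point of $P$ is the iterated suspension $\Susp^{k}\mathrm{lk}\,\sigma$. Links also iterate, $\mathrm{lk}(\tau;\mathrm{lk}\,\sigma)=\mathrm{lk}(\sigma\ast\tau;P)$. In these terms the hypothesis that the link of every point has the homology of $\SP^{n-1}$ is equivalent to the statement that $\mathrm{lk}\,\sigma$ is a homology $(n-k-1)$-sphere for every $k$-simplex $\sigma$, that is, that $P$ is a polyhedral $\Z$-homology $n$-manifold.

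The forward implication is routine. If $P$ is a topological $n$-manifold, then for each $x$ excision in a Euclidean chart gives $H_{\ast}(P,P\setminus x)\cong H_{\ast}(\R^{n},\R^{n}\setminus 0)$, the reduced homology of $\SP^{n-1}$ shifted by one degree; since this group is also the shifted reduced homology of $\mathrm{lk}\,x$, every point-link is a homology $(n-1)$-sphere. For a vertex $v$ with $n\ge 3$ a punctured Euclidean neighborhood is homotopy equivalent to $\SP^{n-1}$, hence simply connected, and $\mathrm{lk}\,v$ deformation retracts onto it, so $\mathrm{lk}\,v$ is simply connected.

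For the converse I would show that every point has a Euclidean neighborhood, treating the strata in turn. Take $x$ in the interior of $\sigma^{k}$ with $L=\mathrm{lk}\,\sigma$ a homology $(n-k-1)$-sphere; the neighborhood $\R^{k}\times C(L)$ is the open cone $C(\Susp^{k}L)$, so the task is to certify that this open cone is $\R^{n}$. At a vertex ($k=0$) the link $L=\mathrm{lk}\,v$ is by hypothesis a simply connected homology $(n-1)$-sphere, so a homotopy sphere; when $L$ is moreover a manifold this forces $L\approx\SP^{n-1}$ by the topological generalized Poincar\'e conjecture and hence $C(L)\approx\R^{n}$, and the simple-connectivity hypothesis is exactly what rules out the genuinely non-Euclidean cones such as $C(\PS^{3})$, which occur when no suspension coordinate is present to repair them. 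For $k\ge 1$ the link $L$ may be non-simply-connected—this is precisely where $\PS^{3}$ appears, as in the singular orbit of Example~\ref{E:C1P3}—and $L$ need not be a sphere; here the extra cone directions are decisive. By the Double Suspension Theorem of Edwards and Cannon \cite{Ca,Ed}, the double suspension of the homology sphere $L$ is a sphere, so $\Susp^{k+1}L\approx\SP^{n}$; viewing $\Susp^{k}L$ as the equator of $\SP^{n}=\Susp(\Susp^{k}L)$ exhibits the neighborhood $C(\Susp^{k}L)$ as one open hemisphere, which the recognition theory below certifies to be $\R^{n}$. The cases $n\le 4$ are handled separately by the classical low-dimensional results.

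The main obstacle, and the only genuinely deep ingredient, is the Double Suspension Theorem, and underneath it the general machinery that makes ``the open cone on a homology sphere is Euclidean'' rigorous in the borderline cases: Edwards' cell-like approximation theorem together with the characterization of topological manifolds among generalized (homology) manifolds by the disjoint disks property. In this language the simple-connectivity of vertex links is precisely the local condition guaranteeing the disjoint disks property at the top-codimension cone points, while the double suspension phenomenon is what supplies it in higher codimension. Everything else—the homological bookkeeping, the iteration of links, and the appeal to the generalized Poincar\'e conjecture for the simply connected vertex links—is comparatively formal.
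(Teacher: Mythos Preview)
The paper does not prove this theorem at all: it is quoted as a known result of Edwards, with a pointer to \cite[p.~742]{Wu}, and is used as a black box in the proof of Theorem~\ref{T:STRUCTURE}. So there is no ``paper's own proof'' to compare your proposal against.

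That said, your sketch is a reasonable outline of the ideas that genuinely underlie Edwards' theorem---the forward direction is routine local homology, and the converse ultimately rests on the Double Suspension Theorem / cell-like approximation / disjoint disks machinery, exactly as you indicate. One point worth tightening: at a vertex you write that when the link $L$ ``is moreover a manifold'' the generalized Poincar\'e conjecture gives $L\approx\SP^{n-1}$, but in a general finite polyhedron the vertex link need not be a manifold; it is only a polyhedral homology sphere, and the argument must recurse (or go straight to the DDP/resolution machinery) rather than invoke Poincar\'e directly. You allude to this later, but as written the vertex case is not self-contained. In any event, the paper expects you to cite this theorem, not reprove it.
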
  \hfill$\square$


\section{Existence of Invariant Alexandrov metrics}
\label{S:EQ_ALEX_METRICS} In this section we point out that every closed cohomogeneity one topological manifold admits an invariant Alexandrov metric. Let us first recall some basic facts about Alexandrov spaces, all of which can be found in \cite{BBI}. 

A finite dimensional length space $(\X,d)$ has curvature bounded from below by $k$ if every point $x\in X$ has a neighborhood $U$ such that for any collection of four different points $(x_0,x_1,x_2,x_3)$ in $U$, the following condition  holds:
\[
\angle_{x_{1},x_{2}}(k)+\angle_{x_{2},x_{3}}(k)+\angle_{x_3,x_1}(k)\leq 2\pi.
\]
Here, $\angle_{x_{i},x_{j}}(k)$, called the \emph{comparison angle}, is the angle at $x_{0}(k)$ in the geodesic triangle in $\M^2_k$, the simply-connected $2$-manifold with constant curvature $k$, with vertices $(x_{0}(k),x_{i}(k),x_{j}(k))$, which are the isometric images of $(x_{0},x_{i},x_{j})$. An \emph{Alexandrov space} is a complete length space of curvature bounded below by $k$, for some $k\in \R$. The isometry group $\Isom(\X)$ of an Alexandrov space $\X$ is a Lie group (see \cite{FY}) and $\Isom(\X)$  is compact if $\X$ is compact. Alexandrov spaces of cohomogeneity one have been studied in \cite{GS}.


\subsection{Proof of Corollary~\ref{C:EQUIV_ALEX_METRIC}}
Let $\M$ be a closed topological manifold with a cohomogeneity one action of a compact Lie group $\G$. If the action is equivalent to a smooth action, then it is well known that one can construct a $\G$-invariant Riemannian metric on $\M$. Since, $\M$ is compact, this Riemannian metric has a lower sectional curvature bound and hence $\M$ is an Alexandrov space. Suppose now that the $\G$ action is not equivalent to a smooth action. In this case, $\M$ has a group diagram satisfying the hypotheses of \cite[Proposition~5]{GS} and, by this result, $\M$ admits an invariant Alexandrov metric.
 \hfill$\square$


\section{Tools and further definitions} 
\label{S:TOOLS}
In this section we review some standard results for cohomogeneity one smooth manifolds in the context of topological manifolds. We will use these tools in the proof of Theorem~\ref{T:CLASSIF}.  

We first point out that all the propositions and lemmas used by Hoelscher in \cite{Ho2} to determine both the groups $\G$ that may act by cohomogeneity one on a smooth closed manifold $\M$ and the fundamental group of $\M$ also hold for topological manifolds. Indeed, the fact that $\M$ is a union of  two mapping cylinders is a key point in the proofs of  most statements in \cite{Ho2}. By Mostert's work \cite{Mo}, this is also the case for a cohomogeneity one topological manifold. We collect the relevant results here for easy reference, focusing our attention on the cases where at least one of $\K^\pm/\HH$ is the Poincar\'e sphere.

The following proposition determines when two different group diagrams yield the same manifold. Its proof follows as in  \cite[Theorem~IV.8.2]{Br}, after  observing that a cohomogeneity one topological manifold decomposes as the union of two mapping cylinders.

\begin{prop} If a cohomogeneity  one topological manifold is given by a group diagram $(\G,\HH,\K^{-},K^{+})$, then any of the following operations on the group diagram will result in a $\G$-equivariantly homeomorphic topological manifold:
\begin{enumerate}
 	\item Switching $\K^{-}$ and $\K^{+}$,
	\item Conjugating each group in the diagram by the same element of $\G$,
	\item Replacing $K^{-}$ with $gK^{-}g^{-1}$  for $g\in N(\HH)_0$.
\end{enumerate}
Conversely, the group diagrams for two $\G$-equivariantly homeomorphic cohomogeneity one, closed topological manifolds must be mapped to each other by some combination of these three operations.
\end{prop}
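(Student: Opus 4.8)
The plan is to transfer the classical reconstruction argument from the smooth category, as in \cite[Theorem~IV.8.2]{Br}, the one new ingredient being that, by Theorem~\ref{T:STRUCTURE}(1), a closed cohomogeneity one topological manifold with interval orbit space decomposes as the union of two cone bundles (equivalently, two mapping cylinders)
\[
\M=\G\times_{\K^{-}}C(\K^{-}/\HH)\cup_{\G/\HH}\G\times_{\K^{+}}C(\K^{+}/\HH),
\]
glued along the central principal orbit $\G/\HH$. This is exactly the structure used in the smooth proof, so once it is available the combinatorics are formal. I would first note that each of the three operations preserves the homeomorphism type of the quotients $\K^{\pm}/\HH$, and hence carries a diagram admissible in the sense of Theorem~\ref{T:STRUCTURE}(1) to another admissible diagram, so that the reconstructed spaces are again manifolds.

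For the forward direction I would exhibit an explicit equivariant homeomorphism realizing each operation. Operation (1) is induced by the orbit-space flip $t\mapsto -t$ of $[-1,+1]$, which interchanges the two cone bundles. Operation (2) is induced by $[a,c]\mapsto[ag^{-1},c]$ on each cone bundle, together with the homeomorphism $\K^{\pm}/\HH\to g\K^{\pm}g^{-1}/g\HH g^{-1}$ given by $k\HH\mapsto gk\HH g^{-1}$; this map is $\G$-equivariant for the left action. For operation (3), I would use that every equivariant self-homeomorphism of $\G/\HH$ is a right translation $R_n\colon a\HH\mapsto an^{-1}\HH$ for some $n\in N(\HH)$, and that regluing the two cone bundles along $\G/\HH$ by $R_n$ has the same effect as conjugating $\K^{-}$ by $n$. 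When $n\in N(\HH)_0$ one may choose a path from $n$ to $e$ in $N(\HH)_0$, producing an equivariant isotopy from $R_n$ to the identity; since regluing by isotopic maps yields homeomorphic results, operation (3) gives an equivariantly homeomorphic manifold.

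For the converse, let $F\colon\M_1\to\M_2$ be a $\G$-equivariant homeomorphism of the manifolds given by $(\G,\HH_1,\K_1^{-},\K_1^{+})$ and $(\G,\HH_2,\K_2^{-},\K_2^{+})$. Then $F$ descends to a homeomorphism $\bar F$ of the orbit spaces $[-1,+1]$; after applying operation (1) to the second diagram if necessary, I may assume $\bar F$ fixes the endpoints, so that $F$ sends the singular orbit over $\pm 1$ to the singular orbit over $\pm 1$. Equivariance gives $\G_x=\G_{F(x)}$ for all $x$; evaluating this at a point of the central principal orbit shows $\HH_1$ is conjugate to $\HH_2$, and evaluating at singular points shows $\K_1^{\pm}$ is conjugate to $\K_2^{\pm}$. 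Conjugating the second diagram by a suitable element of $\G$ (operation (2)), I may assume $\HH_1=\HH_2=\HH$. The remaining discrepancy is recorded by the equivariant self-homeomorphism $F|_{\G/\HH}=R_n$, $n\in N(\HH)$, of the central orbit, which matched against the gluing forces the elements conjugating $\K_i^{-}$ (resp.\ $\K_i^{+}$) to lie in $N(\HH)$; that is precisely the content of operation (3) up to the identity-component condition.

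The main obstacle I anticipate is this last step: showing that the relevant conjugating element lies in the \emph{identity component} $N(\HH)_0$, rather than in all of $N(\HH)$, which is exactly what distinguishes operation (3) from a special case of operation (2). In the smooth category this is handled with normal geodesics and the differentiable slice theorem; here those tools are unavailable, but they are replaced by the cone-bundle decomposition of Theorem~\ref{T:STRUCTURE} (whose validity in the singular case rests on Edwards' Theorem~\ref{T:EDWARDS_POLY}), which furnishes honest slices $C(\K^{\pm}/\HH)$. Using these, I would argue that $F$, being a homeomorphism of a connected manifold that induces the identity on the connected orbit space, can be connected through equivariant self-homeomorphisms to one that is the identity on the central orbit; this path traces out a path in $N(\HH)$ from the twisting element to $e$, forcing it into $N(\HH)_0$. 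With this established, the rest is a direct transcription of \cite[Theorem~IV.8.2]{Br}.
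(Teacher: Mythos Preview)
Your proposal is correct and follows exactly the same approach as the paper: the paper's entire proof is the single sentence ``Its proof follows as in \cite[Theorem~IV.8.2]{Br}, after observing that a cohomogeneity one topological manifold decomposes as the union of two mapping cylinders,'' and your plan is precisely to transfer Bredon's argument using that decomposition from Theorem~\ref{T:STRUCTURE}. Your detailed outline (including the identification of the $N(\HH)_0$ step as the one place requiring care and the use of the cone-bundle section in place of a normal geodesic) is a faithful unpacking of what that one-line reference entails.
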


The following result of Parker \cite[Proposition~1.8]{Ho2} is stated for smooth cohomogeneity one manifolds but the proof carries over to the topological category.


\begin{prop}[Parker]
\label{P:EXCEP_ORBIT}
A closed simply connected cohomogeneity one topological manifold has no exceptional orbits.
\end{prop}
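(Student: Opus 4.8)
The plan is to show that a closed, simply connected cohomogeneity one topological manifold $\M$ cannot possess an \emph{exceptional orbit}, that is, a singular orbit $\G/\K$ with $\K/\HH$ finite and nontrivial (equivalently, $\K^\pm/\HH$ is a $0$-dimensional homology sphere consisting of more than two points, or more precisely a finite set so that $\dim(\G/\K)=\dim(\G/\HH)$ but the cone fiber is nontrivial). By Theorem~\ref{T:STRUCTURE}, an exceptional orbit corresponds to one of the slices $\K^{\pm}/\HH$ being a homology $0$-sphere; but the only possibilities for $\K^{\pm}/\HH$ are spheres or $\PS^3$, and in dimension $0$ this forces $\K^{\pm}/\HH \approx \SP^0$, whence $[\K^{\pm}:\HH]$ is finite. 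An exceptional orbit is exactly such a singular orbit whose codimension equals one (the cone on $\SP^0$ being a $1$-cell), which produces a finite nontrivial quotient rather than a genuine sphere of positive dimension. First I would set up this dichotomy cleanly so that ruling out exceptional orbits reduces to a statement about the fundamental group.

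The key step is a van Kampen / Seifert--van Kampen argument on the decomposition $\M = \G\times_{\K^{-}}C(\K^{-}/\HH) \cup_{\G/\HH}\G\times_{\K^{+}} C(\K^{+}/\HH)$ provided by Theorem~\ref{T:STRUCTURE}. Each piece $\G\times_{\K^{\pm}}C(\K^{\pm}/\HH)$ deformation retracts onto the singular orbit $\G/\K^{\pm}$, and their intersection is (a neighborhood of) the regular orbit $\G/\HH$. Applying van Kampen, $\pi_1(\M)$ is the pushout of $\pi_1(\G/\K^{-}) \leftarrow \pi_1(\G/\HH) \rightarrow \pi_1(\G/\K^{+})$, i.e.\ the amalgamated quotient of $\pi_1(\G/\HH)$ by the normal subgroups generated by the images of the kernels of the two maps $\pi_1(\G/\HH)\to \pi_1(\G/\K^{\pm})$. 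From the fibrations $\K^{\pm}/\HH \to \G/\HH \to \G/\K^{\pm}$ one reads off that these kernels are precisely the images of $\pi_1(\K^{\pm}/\HH)$ under the connecting maps into $\pi_1(\G/\HH)$. Thus $\pi_1(\M)$ is generated by $\pi_1(\G/\HH)$ modulo the normal closure of the images of $\pi_1(\K^{-}/\HH)$ and $\pi_1(\K^{+}/\HH)$. If an orbit $\G/\K^{+}$ is exceptional, then $\K^{+}/\HH\approx\SP^0$ is disconnected, which makes $\G/\HH\to\G/\K^{+}$ a nontrivial finite covering; this covering contributes an index-$2$ (or higher) factor to $\pi_1(\M)$ that the other relations cannot kill, contradicting simple connectedness.

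Concretely, I would argue as follows. An exceptional orbit forces $\HH$ to have index $\geq 2$ in $\K^{\pm}$ with $\K^{\pm}/\HH$ finite. Since $\G$ is connected, $\G/\K^{\pm}$ and $\G/\HH$ are connected, and the projection $\G/\HH \to \G/\K^{\pm}$ is a nontrivial finite covering of degree $[\K^{\pm}:\HH]\geq 2$; hence the induced map $\pi_1(\G/\HH)\to\pi_1(\G/\K^{\pm})$ is injective with image of finite index $\geq 2$, and the normal subgroup it ought to kill (the image of $\pi_1(\K^{\pm}/\HH)=\pi_1(\SP^0)=1$) is trivial. Feeding this into the van Kampen computation, the relation coming from an exceptional side is vacuous, so $\pi_1(\M)$ surjects onto the nontrivial finite quotient detected by that covering, contradicting $\pi_1(\M)=1$. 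I would phrase this using the standard cohomogeneity one fact (carried over to the topological setting as the excerpt permits) that $\pi_1(\M)\cong \pi_1(\G/\HH)\big/\langle\langle \ker(\pi_1(\G/\HH)\to\pi_1(\G/\K^{-})),\ \ker(\pi_1(\G/\HH)\to\pi_1(\G/\K^{+}))\rangle\rangle$.

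The main obstacle I anticipate is justifying the van Kampen step rigorously in the purely topological category, where the pieces are mapping cylinders rather than smooth disk bundles: I must confirm that each $\G\times_{\K^{\pm}}C(\K^{\pm}/\HH)$ is open-thickenable and deformation retracts to $\G/\K^{\pm}$, that the intersection is homotopy equivalent to $\G/\HH$, and that the maps on $\pi_1$ induced by inclusion agree with the bundle-projection maps, so that the kernels are correctly identified with images of $\pi_1(\K^{\pm}/\HH)$. This is exactly the point where the excerpt's remark that ``$\M$ is a union of two mapping cylinders'' in the topological category (following Mostert) is essential, and I would lean on that structural fact together with Theorem~\ref{T:STRUCTURE} to make the homotopy-theoretic reductions legitimate; everything after that is the same group-theoretic contradiction as in Parker's smooth proof.
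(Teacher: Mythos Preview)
Your approach---van Kampen on the double mapping cylinder decomposition from Theorem~\ref{T:STRUCTURE}---is precisely what the paper intends: it does not give its own argument but simply records that Parker's smooth proof carries over, and your setup is that proof. There is, however, a real gap in how you finish.

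You correctly identify the van Kampen output as the pushout of $\pi_1(\G/\K^-) \leftarrow \pi_1(\G/\HH) \rightarrow \pi_1(\G/\K^+)$, but you immediately rewrite it as ``$\pi_1(\G/\HH)$ modulo the normal closure of the two kernels,'' and at the end you invoke this quotient description as the ``standard cohomogeneity one fact.'' That simplification is valid only when both maps $\pi_1(\G/\HH)\to\pi_1(\G/\K^\pm)$ are \emph{surjective}---exactly what fails at an exceptional orbit, where $\K^+/\HH\approx\SP^0$ and $\pi_1(\G/\HH)\to\pi_1(\G/\K^+)$ is an injection onto an index-$2$ subgroup. (This is why Proposition~\ref{P:van_Kampen} carries the explicit hypothesis $\dim(\K^\pm/\HH)\ge 1$.) If you use your formula with $N^+=1$ you obtain $\pi_1(\M)\cong\pi_1(\G/\HH)/N^-\cong\pi_1(\G/\K^-)$, which may perfectly well be trivial, and the contradiction disappears.

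The repair is to stay with the honest pushout. Since the image of $\pi_1(\G/\HH)$ in $\pi_1(\G/\K^+)$ has index~$2$ (hence is normal), the quotient map $\pi_1(\G/\K^+)\to\Z_2$ together with the trivial map $\pi_1(\G/\K^-)\to\Z_2$ agree on the image of $\pi_1(\G/\HH)$, so the universal property of the pushout produces a surjection $\pi_1(\M)\twoheadrightarrow\Z_2$. This is the ``nontrivial finite quotient detected by the covering'' you allude to; the point is that it is a quotient of $\pi_1(\G/\K^+)$, not of $\pi_1(\G/\HH)$, and is invisible from the formula you wrote down. With this adjustment your argument is complete and coincides with the intended one.
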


The van Kampen Theorem applied to a closed cohomogeneity one manifold written as a union of two mapping cylinders yields the following result (cf.~\cite[Proposition~1.8]{Ho2}).


\begin{prop}[van Kampen Theorem]
\label{P:van_Kampen}
Let $\M$ be the closed cohomogeneity one topological  manifold given by the group diagram $(\G, \HH,  \K^-, \K^+)$ with   $\dim (\K^\pm/\HH) \geq 1$. 
Then $\pi_1(\M)\cong \pi_1(\G/\HH)/\N^-\N^+$, where
\begin{equation*}
\N^\pm= \ker\{\pi_1(\G/\HH)\to \pi_1(\G/\K^\pm)\}= \mathrm{Im} \{\pi_1(\K^\pm/\HH)\to \pi_1(\G/\HH)\}.
\end{equation*}
In particular $\M$ is simply connected if and only if the images of $\K^\pm/\HH$
generate $\pi_1(\G/\HH)$ under the natural inclusions.
\end{prop}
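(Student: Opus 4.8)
The plan is to apply the van Kampen theorem to the decomposition
\[
\M=\G\times_{\K^{-}}C(\K^{-}/\HH) \cup_{\G/\HH}\G\times_{\K^{+}} C(\K^{+}/\HH)
\]
from Theorem~\ref{T:STRUCTURE}. The first step is to identify the homotopy type of each piece. The mapping cylinder $\G\times_{\K^{\pm}}C(\K^{\pm}/\HH)$ deformation retracts onto its core, the singular orbit $\G/\K^{\pm}$; this retraction is the cone collapse applied fiberwise over $\G/\K^{\pm}$, and because the cone point of $C(\K^\pm/\HH)$ is fixed by $\K^\pm$, the retraction is well defined on the associated bundle. Hence each open piece (thickened slightly so that the two pieces form an open cover) is homotopy equivalent to $\G/\K^{\pm}$, and their intersection is a neighborhood of the regular orbit $\G/\HH$, which is homotopy equivalent to $\G/\HH$.

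Next I would feed these identifications into the van Kampen theorem. Writing the two pieces as $U_\pm\simeq \G/\K^\pm$ with $U_-\cap U_+\simeq \G/\HH$, van Kampen gives
\[
\pi_1(\M)\cong \pi_1(\G/\K^-)\ast_{\pi_1(\G/\HH)}\pi_1(\G/\K^+),
\]
the amalgamated free product along the two maps induced by the inclusions $\G/\HH\hookrightarrow \G/\K^\pm$. The point of the hypothesis $\dim(\K^\pm/\HH)\geq 1$ is that $U_-\cap U_+$ is then connected (the regular part maps to an interval, not a disconnected set), so van Kampen applies in its amalgamated-product form with a single basepoint. The second step is to simplify this amalgamated product. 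The map $\pi_1(\G/\HH)\to\pi_1(\G/\K^\pm)$ is surjective: from the fibration $\K^\pm/\HH\to \G/\HH\to \G/\K^\pm$ the long exact sequence gives surjectivity of $\pi_1(\G/\HH)\to\pi_1(\G/\K^\pm)$ with kernel exactly $\N^\pm=\operatorname{Im}\{\pi_1(\K^\pm/\HH)\to\pi_1(\G/\HH)\}$, which also supplies the asserted two descriptions of $\N^\pm$. A standard fact about amalgamated products along surjections is that when both structure maps $\pi_1(\G/\HH)\to\pi_1(\G/\K^\pm)=\pi_1(\G/\HH)/\N^\pm$ are quotient maps, the amalgam collapses to $\pi_1(\G/\HH)/\langle\!\langle\N^-,\N^+\rangle\!\rangle$, the quotient by the normal subgroup generated by $\N^-$ and $\N^+$; writing $\N^-\N^+$ for this normal closure yields $\pi_1(\M)\cong\pi_1(\G/\HH)/\N^-\N^+$ as claimed.

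The final "in particular" clause is immediate: $\M$ is simply connected iff $\N^-\N^+=\pi_1(\G/\HH)$, i.e.\ iff the images of $\pi_1(\K^\pm/\HH)$ generate $\pi_1(\G/\HH)$. The main obstacle I anticipate is the purely topological input that has to replace its smooth counterpart: verifying that the bundle $\G\times_{\K^\pm}C(\K^\pm/\HH)$ is genuinely a fiber bundle (or at least a fibration) with a well-behaved cone core in the topological category, so that the homotopy equivalence $U_\pm\simeq\G/\K^\pm$ and the fibration $\K^\pm/\HH\to\G/\HH\to\G/\K^\pm$ are legitimate. This is exactly where the nonstandard fiber $\K^\pm/\HH\approx\PS^3$ could cause trouble, but the structure theorem guarantees that $\M$ really is the union of these two mapping cylinders over honest orbit bundles, so the cone-collapse retraction and the orbit fibration behave just as in the smooth case; once those identifications are in hand the rest is formal group theory. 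I would therefore state the cover and retractions carefully and then quote van Kampen, leaving the routine amalgamated-product simplification to the reader, as the proposition's own remark ("cf.~\cite[Proposition~1.8]{Ho2}") suggests.
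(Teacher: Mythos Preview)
Your approach is exactly what the paper intends: it does not give a separate proof but simply notes that van Kampen applied to the mapping-cylinder decomposition yields the result, citing Hoelscher \cite[Proposition~1.8]{Ho2}, and your writeup fills in precisely those details.

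One small correction: your stated reason for the hypothesis $\dim(\K^\pm/\HH)\geq 1$ is off. The intersection $U_-\cap U_+\simeq \G/\HH$ is always connected (the orbit space is $[-1,1]$ regardless, and $\G$ is connected). The hypothesis is actually used where you invoke the fibration $\K^\pm/\HH\to \G/\HH\to \G/\K^\pm$: it guarantees that the fiber $\K^\pm/\HH$ is \emph{connected} (a sphere $\SP^l$ with $l\geq 1$ or $\PS^3$), so that $\pi_0(\K^\pm/\HH)=0$ and the long exact sequence yields the surjectivity of $\pi_1(\G/\HH)\to\pi_1(\G/\K^\pm)$ that you need to collapse the amalgamated product. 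Without it (i.e.\ for an exceptional orbit with $\K^\pm/\HH\approx\SP^0$) that surjectivity can fail and the formula does not hold as stated.
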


As the homogeneous spaces $\K^\pm/\HH$ are homeomorphic to either spheres or the Poincar\'e homology sphere, their fundamental groups are $\Z$, the identity or the binary icosahedral group. Since these groups are finitely generated, the next lemma follows as in the proof of \cite[Lemma~1.10]{Ho2}.


\begin{lem}
\label{L:SIMP_CONN_COND}
Let $\M$ be the cohomogeneity one topological  manifold given by the group diagram $(\G, \HH,  \K^-, \K^+)$ with at least one of $\K^\pm/\HH$ homeomorphic to $\PS^3$. Denote $\HH_\pm= \HH\cap \K^\pm_0$, and let $\alpha^i_\pm: [0,1]\to \K^\pm_0$ be curves that generate $\pi_1(\K^\pm/\HH)$, with $\alpha^i_\pm(0)=1\in \G$. The manifold $\M$ is simply connected if and only if 
\begin{enumerate}
\item $\HH$ is generated as a subgroup by $\HH_-$ and $\HH_+$, and
\item $\alpha^{i}_-$ and $\alpha^{i}_+$ generate $\pi_1(\G/\HH_0)$.
\end{enumerate}
\end{lem}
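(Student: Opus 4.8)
The plan is to follow the outline of \cite[Lemma~1.10]{Ho2} very closely, since the hypotheses have been set up precisely so that the same argument applies once we know the fundamental groups of $\K^\pm/\HH$ are finitely generated. The starting point is Proposition~\ref{P:van_Kampen}: $\M$ is simply connected if and only if the images of $\pi_1(\K^-/\HH)$ and $\pi_1(\K^+/\HH)$ generate $\pi_1(\G/\HH)$ under the natural inclusions. The whole task is to translate this single condition on $\pi_1(\G/\HH)$ into the two conditions (1) and (2), which are stated in terms of the identity components $\K^\pm_0$ and $\HH_0$.

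First I would analyze $\pi_1(\G/\HH)$ using the long exact sequence of the fibration $\HH\to \G\to \G/\HH$ together with the fibration relating $\HH$ and its identity component $\HH_0$, equivalently the covering $\G/\HH_0\to \G/\HH$ with deck group $\HH/\HH_0$. This exhibits $\pi_1(\G/\HH)$ as an extension involving $\pi_1(\G/\HH_0)$ and the component group $\HH/\HH_0$: concretely, $\pi_0(\HH)=\HH/\HH_0$ contributes to $\pi_1(\G/\HH)$ the classes of loops coming from paths in $\HH$ joining the identity to the other components. The key structural observation is that $\pi_1(\G/\HH)$ is generated by (a) the image of $\pi_1(\G/\HH_0)$ and (b) the classes represented by the components of $\HH$. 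So the generation condition of Proposition~\ref{P:van_Kampen} splits into asking that the $\K^\pm/\HH$-images hit both kinds of generators.

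Next I would identify exactly which classes the curves $\alpha^i_\pm$ and the subgroups $\HH_\pm=\HH\cap\K^\pm_0$ contribute. A loop in $\K^\pm/\HH$ based at the identity coset lifts to a path in $\K^\pm$ from $1$ to some element of $\HH$; its homotopy class in $\G/\HH$ is therefore recorded by the endpoint's component in $\HH/\HH_0$ together with an interior contribution living in $\pi_1(\G/\HH_0)$. This is why the $\alpha^i_\pm$ are taken to start at $1\in\G$ and to run inside $\K^\pm_0$: their projections generate $\pi_1(\K^\pm/\HH)$, and their images account exactly for the part of $\pi_1(\G/\HH_0)$ that is visible from the singular directions, giving condition (2). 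Meanwhile the component-group part of the images of $\pi_1(\K^\pm/\HH)$ is governed by how $\K^\pm$ meets the components of $\HH$, and one checks this is controlled precisely by whether $\HH_-$ and $\HH_+$ generate all of $\HH$, giving condition (1). Combining the two pieces recovers the full generation statement, so both conditions together are equivalent to simple connectivity.

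I expect the main obstacle to be the bookkeeping that separates the $\pi_1(\G/\HH_0)$ contribution from the $\pi_0(\HH)$ contribution in a way that is logically clean rather than merely plausible: one must verify that the images of the $\alpha^i_\pm$ genuinely generate the \emph{interior} piece $\pi_1(\G/\HH_0)$ independently of the component data, and dually that condition (1) captures all of $\HH/\HH_0$ with no hidden interaction between the two conditions. Since $\pi_1(\K^\pm/\HH)$ is finitely generated in every relevant case (it is trivial, $\Z$, or the binary icosahedral group $\I^*$), a finite generating set of curves suffices and no limiting argument is needed; this finiteness is exactly the point where the Poincar\'e-sphere case behaves no differently from the sphere case treated in \cite{Ho2}, so the cited proof carries over essentially verbatim.
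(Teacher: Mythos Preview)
Your proposal is correct and matches the paper's approach exactly: the paper simply observes that $\pi_1(\K^\pm/\HH)$ is one of $\{1,\Z,\I^*\}$, hence finitely generated, and then states that the proof of \cite[Lemma~1.10]{Ho2} carries over without change. You have spelled out the content of that argument (splitting the van Kampen condition into the $\pi_1(\G/\HH_0)$ piece and the $\HH/\HH_0$ piece) in more detail than the paper does, but the route is identical.
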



Recall that a cohomogeneity one action on a closed manifold $\M$ is \emph{non-primitive} if for some diagram $(\G,\HH,\K^-,\K^+)$ for $\M$ the isotropy groups $\K^\pm$ and $\HH$ are contained in some proper subgroup $\LL$ of $\G$. Such a non-primitive action is well known to be equivalent to the usual $\G$ action on $\G \times_\LL \M_\LL$, where $\M_\LL$ is the cohomogeneity one manifold given by the group diagram $(\LL,\HH,\K^-, \K^+)$. 


A cohomogeneity one action of $\G$ on a closed topological manifold $\M$ is \emph{reducible} if there is a proper normal subgroup of $\G$ that still acts by cohomogeneity one with the same orbits. Conversely, there is a natural way of extending an arbitrary cohomogeneity one action to an action by a possibly larger group. Such extensions, called \emph{normal extensions}, are described as follows (cf.~\cite[Propositions 11--13]{GS} and \cite[Section 1.11]{Ho2}). Let $\M$ be a cohomogeneity one topological manifold with group diagram  $(\G_1, \HH_1,  \K^-_1, \K^+_1)$ and let $\LL$ be a compact connected subgroup of $N(\HH_1) \cap  N(\K_1^-) \cap N(\K^+_1)$. Notice that $\LL \cap \HH_1$ is normal in $\LL$ and let $\G_2= \LL/(\LL \cap \HH_1)$. We then define an action by $\G_1\times \G_2$ on $\M$ orbitwise by 
 \[
 (\hat{ g_1}, [l])\cdot g_1(\G_1)_x =\hat{ g_1}g_1l^{-1}(\G_1)_x
 \]
 on each orbit $\G_1/(\G_1)_x$ for $(\G_1)_x= \HH_1$ or $\K^{\pm}_1$.  
 
 Notice that every reducible action is a normal extension of its restricted action. Therefore it is natural to consider non-reducible actions in the classification. We will use the following result on reducible actions (cf.~\cite[Proposition~11]{GS} and \cite[Proposition~1.12]{Ho2}) in the proof of Theorem~\ref{T:CLASSIF}.


\begin{prop}\label{reducing action} 
Let $\M$ be a cohomogeneity one manifold given by the group diagram $(\G, \HH,  \K^-, \K^+)$ and suppose that $\G_1\times \G_2$ with $\Proj_2(\HH)=\G_2$. Then the subaction of $\G_1\times 1$ on $\M$  is also by cohomogeneity one, with the same orbits, and with isotropy groups  $\K^{\pm}_1= \K^\pm \cap (\G_1\times 1)$ and $\HH_1= \HH \cap (\G_1\times 1)$.
\end{prop}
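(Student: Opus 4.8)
The plan is to reduce everything to an elementary group-theoretic observation about transitivity of the subgroup $\G_1\times 1$ on each orbit. Write $\G=\G_1\times\G_2$ and let $\Proj_2\colon\G\to\G_2$ denote the projection onto the second factor. The central lemma I would isolate is: if $\LL$ is any closed subgroup of $\G$ with $\Proj_2(\LL)=\G_2$, then $\G_1\times 1$ already acts transitively on $\G/\LL$, and the isotropy group of the identity coset under this subaction is $\LL\cap(\G_1\times 1)$.

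To prove the lemma I would show directly that $(\G_1\times 1)\cdot\LL=\G$. Given $(g_1,g_2)\in\G$, surjectivity of $\Proj_2|_\LL$ provides $(h_1,h_2)\in\LL$ with $h_2=g_2$; then $(g_1 h_1^{-1},1)\in\G_1\times 1$ and $(g_1 h_1^{-1},1)(h_1,h_2)=(g_1,g_2)$, so every coset of $\LL$ meets $\G_1\times 1$. Hence $\G_1\times 1$ is transitive on $\G/\LL$, and the isotropy of $e\LL$ is $(\G_1\times 1)\cap\LL$, giving an equivariant identification $\G/\LL\approx(\G_1\times 1)/(\LL\cap(\G_1\times 1))$.

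Next I would apply the lemma to each orbit type. By Theorem~\ref{T:STRUCTURE} the orbits of $\M$ are exactly the principal orbits $\G/\HH$ and the two singular orbits $\G/\K^\pm$. The hypothesis $\Proj_2(\HH)=\G_2$ feeds the lemma directly for the principal orbit, giving transitivity of $\G_1\times 1$ with isotropy $\HH_1=\HH\cap(\G_1\times 1)$. For the singular orbits I would first note that $\HH\subseteq\K^\pm$ forces $\Proj_2(\K^\pm)\supseteq\Proj_2(\HH)=\G_2$, so $\Proj_2(\K^\pm)=\G_2$ and the lemma applies again, yielding transitivity with isotropy $\K^\pm_1=\K^\pm\cap(\G_1\times 1)$.

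Finally, since $\G_1\times 1$ acts transitively on every $\G$-orbit, each $\G$-orbit is a single $(\G_1\times 1)$-orbit; the two orbit decompositions coincide as partitions of $\M$, so the orbit space $\M/(\G_1\times 1)$ equals $\M/\G$, which is one-dimensional. Thus the subaction is of cohomogeneity one with the same orbits and the stated isotropy groups. I do not anticipate a serious obstacle here: the only points requiring care are verifying that $\Proj_2$ remains surjective on $\K^\pm$ (immediate from $\HH\subseteq\K^\pm$) and observing that identical orbits yield literally the same orbit space rather than merely a homeomorphic one.
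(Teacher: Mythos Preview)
Your argument is correct and is essentially the standard proof of this fact. Note, however, that the paper does not actually prove this proposition: it merely states it, citing \cite[Proposition~11]{GS} and \cite[Proposition~1.12]{Ho2}, so there is no ``paper's own proof'' to compare against. The group-theoretic lemma you isolate---that $\Proj_2(\LL)=\G_2$ forces $(\G_1\times 1)\cdot\LL=\G$, hence transitivity of $\G_1\times 1$ on $\G/\LL$ with isotropy $\LL\cap(\G_1\times 1)$---is exactly the mechanism behind the cited results, and your propagation of surjectivity from $\HH$ to $\K^\pm$ via the inclusion $\HH\subseteq\K^\pm$ is the right way to handle the singular orbits. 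One small remark: the group diagram $(\G,\HH,\K^-,\K^+)$ already presupposes the interval case, so your appeal to Theorem~\ref{T:STRUCTURE} to enumerate the orbit types is appropriate; no separate treatment of the circle case is needed here.
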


 The following  two propositions  give restrictions on the groups that may act by cohomogeneity one on a closed topological manifold. The next proposition can be found in  \cite{Br} for locally smooth actions. Here we prove it in the slightly more general case of topological actions on topological manifolds. 
 
 
\begin{prop}
\label{L:DIM_BOUND}
If  a compact connected Lie group $\G$ acts (almost) effectively on a topological $n$-manifold with principal orbits of dimension $k$, then $k\leq \dim \G \leq k(k+1)/2$.
\end{prop}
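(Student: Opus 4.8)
The plan is to bound the dimension of the principal orbit $\G/\HH$ from above and below using the dimension of $\G$ itself, together with the fact that a principal orbit is an $(n-1)$-dimensional submanifold admitting a transitive $\G$-action. First I would establish the lower bound. A principal orbit $\G/\HH$ is a homogeneous space of dimension $k$, so $k = \dim \G - \dim \HH \leq \dim \G$, giving immediately $k \leq \dim \G$. This is the easy half and requires only that the orbit, being of dimension $k$, forces $\dim \HH = \dim \G - k \geq 0$.

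For the upper bound $\dim \G \leq k(k+1)/2$, the key idea is to show that $\G$ acts (almost) effectively and isometrically on the principal orbit with respect to a suitable invariant metric, so that $\G$ injects (up to a finite kernel) into the isometry group of a $k$-dimensional manifold. The plan is first to pass to the effective action: since the action on $\M$ is almost effective, the ineffective kernel $\K$ is finite, and $\G/\K$ acts effectively, with $\dim(\G/\K) = \dim \G$, so it suffices to bound the dimension of an effective action. Next I would argue that $\G$ acts (almost) effectively on a single principal orbit. This is the delicate point: a priori an element of $\G$ could fix an entire principal orbit pointwise while moving other orbits. I would use the structure from Theorem~\ref{T:STRUCTURE}, namely that principal orbits fill an open dense set $\M_0 \approx \G/\HH \times (\text{interval or circle})$, to conclude that an element fixing one principal orbit pointwise fixes a neighborhood, hence lies in the ineffective kernel $\K$ of the whole action; thus the induced action of $\G/\K$ on a principal orbit is almost effective.

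Having reduced to an almost effective transitive action of a compact connected group on the $k$-manifold $\G/\HH$, I would equip $\G/\HH$ with a $\G$-invariant Riemannian metric (averaging any metric over the compact group $\G$). Then $\G$ acts by isometries, so there is a homomorphism $\G \to \Isom(\G/\HH)$ with finite kernel, whence $\dim \G \leq \dim \Isom(\G/\HH)$. The classical bound on the isometry group of a connected Riemannian $k$-manifold, $\dim \Isom \leq k(k+1)/2$ (with equality only for the space forms), then yields $\dim \G \leq k(k+1)/2$, completing the proof.

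The main obstacle I anticipate is the almost-effectiveness of the induced action on a single principal orbit, since in the purely topological category one cannot directly invoke the smooth slice theorem to control how elements act transversally to an orbit. The resolution should come from combining the product structure of the regular part $\M_0$ with the Montgomery--Zippin and Montgomery--Yang results already cited in Section~\ref{S:Preliminaries}: an element acting trivially on one principal orbit must, by continuity and the local product structure, act trivially on all nearby orbits, hence on the dense set $\M_0$, hence on all of $\M$, so it belongs to the global ineffective kernel. Once this transfer of effectiveness is secured, the remaining steps are standard and use only the invariant-metric construction and the classical isometry-group dimension bound.
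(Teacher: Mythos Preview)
Your overall strategy matches the paper's: both reduce the upper bound to the Myers--Steenrod inequality by endowing a principal orbit $\G/\HH$ with a $\G$-invariant Riemannian metric and observing that the homomorphism $\G\to\Isom(\G/\HH)$ has finite kernel once one knows $\G$ acts almost effectively on principal orbits.

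The one substantive difference is in how that almost-effectiveness is justified. You appeal to the cohomogeneity-one product structure $\M_0\approx \G/\HH\times(\text{interval or circle})$ from Theorem~\ref{T:STRUCTURE}, arguing by continuity and density. The paper instead gives a short, purely group-theoretic argument valid for any $k$ (the proposition as stated does not assume $k=n-1$): the kernel of the $\G$-action on a principal orbit is $\bigcap_{g\in\G} g\HH g^{-1}$, and since, by the Montgomery--Yang/Montgomery--Zippin facts recalled in Section~\ref{S:Preliminaries}, $\HH$ is conjugate into every isotropy group $\G_x$, this intersection lies in $\bigcap_{x\in\M}\G_x$, the global ineffective kernel, which is finite by hypothesis. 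This avoids invoking Theorem~\ref{T:STRUCTURE} (so the argument does not circularly depend on the cohomogeneity-one setting), and it sidesteps the ``delicate point'' you flagged entirely. Your route is correct in the cohomogeneity-one case, but the paper's argument is both simpler and covers the proposition in the generality in which it is stated.
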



 \begin{proof}
Let $\G/\HH$ be a principal orbit. Since  $\dim \G/\HH=k$, the left inequality is immediate. To verify the right inequality, it suffices to know that $\G$ acts almost effectively on principal orbits, since then we can equip $\G/\HH$ with a $\G$-invariant Riemannian metric and obtain a homomorphism  $\varphi:\G\to\Isom(\G/\HH)$
with finite kernel $\K$. It then follows that $\G/\K\cong \varphi(\G)\leq \Isom(\G/\HH)$. Since $\K$ is finite, 
\begin{align*}
\dim \G = \dim \G/\K & \leq \dim \Isom(\G/\HH)\\
				    & \leq \frac{k(k+1)}{2},
\end{align*}
where the last inequality follows from a well-known theorem of Myers and Steenrod \cite{MS}. To finish the proof, let us show that $\G$ acts almost effectively on principal orbits. As mentioned in Section~\ref{S:Preliminaries},  all principal isotropy groups are conjugate to each other and conjugate to a subgroup of the  singular isotropy groups. As a result, $\G$ acts almost effectively on the principal orbits. 
\end{proof}


An argument as in the proof of \cite[Proposition 1.18]{Ho2} yields the following lemma:
\begin{lem}\label{P:ABELIAN_G}
Let $\M$ be a closed, simply connected topological manifold with an (almost) effective cohomogeneity one action of a compact Lie group $\G$. Suppose that the following conditions hold:
\begin{itemize}
	\item $\G=\G_1\times \T^m$ and $\G_1$ is semisimple;
	\item $\G$ acts non-reducibly;
	\item at least one of the homogeneous spaces $\K^\pm/\HH$ is the Poincar\'e sphere.
\end{itemize} 
Then, $\G_1\neq 1$ and $m\leq 1$. Moreover, if $m=1$, then   one of the homogeneous spaces $\K^\pm/\HH$ is a circle.
\end{lem}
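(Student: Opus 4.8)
The plan is to follow the argument of \cite[Proposition~1.18]{Ho2}, combining Bredon's theorem (Theorem~\ref{T:BREDON_P3}), the reduction criterion (Proposition~\ref{reducing action}), and the simple-connectivity criterion (Lemma~\ref{L:SIMP_CONN_COND}). After relabeling, I assume $\K^+/\HH\approx\PS^3$. To see that $\G_1\neq 1$, suppose $\G_1=1$, so that $\G=\T^m$ is abelian and every isotropy group is abelian. The identity component $\K^+_0$ acts transitively on the connected space $\K^+/\HH\approx\PS^3$; dividing by the kernel of this action yields a compact group acting effectively and transitively on $\PS^3$ which, being a quotient of the abelian group $\K^+_0$, is abelian. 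This contradicts Theorem~\ref{T:BREDON_P3}(2), which forces such a group to be $\SU(2)$ or $\SO(3)$. Hence $\G_1\neq 1$.

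Next I would show that non-reducibility makes $\Proj_{\T^m}(\HH)$ finite. Let $P=\Proj_{\T^m}(\HH)$, let $P_0$ be its identity component, and set $d=\dim P_0$. Suppose $d\geq 1$. Since the fundamental group of a torus is free, the exact sequence $P_0\to\T^m\to\T^m/P_0$ splits, so there is a subtorus $\T'\subseteq\T^m$ with $\dim\T'=m-d<m$ and $P_0\T'=\T^m$. Then $\LL=\G_1\times\T'$ is a \emph{proper} normal subgroup of $\G$. Because $\T'P_0=\T^m$ and $\Proj_{\T^m}(\K^\pm)\supseteq P\supseteq P_0$, a direct check gives $\LL\HH=\G$ and $\LL\K^\pm=\G$; that is, $\LL$ is transitive on every orbit and hence acts by cohomogeneity one with the same orbits. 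This contradicts non-reducibility, so $d=0$ and $P$ is finite.

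With $d=0$ I would compute $\pi_1(\G/\HH_0)$. The homotopy sequence of $\HH_0\to\G\to\G/\HH_0$ identifies $\pi_1(\G/\HH_0)$ with the quotient of $\pi_1(\G)$ by the image of $\pi_1(\HH_0)$, which is abelian. As $\G_1$ is semisimple, $\pi_1(\G)\cong\pi_1(\G_1)\times\Z^m$ with $\pi_1(\G_1)$ finite, and the image of $\pi_1(\HH_0)$ in the free part $\pi_1(\T^m)\cong\Z^m$ has rank $d=0$; therefore $\rank\,\pi_1(\G/\HH_0)=m$. Now I invoke Lemma~\ref{L:SIMP_CONN_COND}(2): the curves generating $\pi_1(\K^\pm/\HH)$ must have images generating the abelian group $\pi_1(\G/\HH_0)$. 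Since $\pi_1(\K^+/\HH)=\pi_1(\PS^3)$ is the perfect group $\I^*$, its image in the abelian group $\pi_1(\G/\HH_0)$ is trivial, so the image of $\pi_1(\K^-/\HH)$ alone must have rank $m$.

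Finally, $\K^-/\HH$ is a sphere or the Poincar\'e sphere, so $\pi_1(\K^-/\HH)$ has rank at most $1$ --- rank $1$ occurring only for $\K^-/\HH\approx\SP^1$, since higher-dimensional spheres are simply connected and $\PS^3$ has finite fundamental group. (By Proposition~\ref{P:EXCEP_ORBIT} there are no exceptional orbits, so $\dim\K^\pm/\HH\geq 1$ and the $0$-sphere cannot occur.) Comparing ranks gives $m\leq 1$, and $m=1$ forces $\K^-/\HH\approx\SP^1$, a circle. I expect the main obstacle to be the reduction step of the second paragraph: one must produce the splitting subtorus $\T'$ and verify carefully the transitivity identities $\LL\HH=\LL\K^\pm=\G$, so that the reduced $\LL$-action falls under the hypotheses of Proposition~\ref{reducing action} and genuinely contradicts non-reducibility. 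Once $d=0$ is established, the rank computation and the application of Lemma~\ref{L:SIMP_CONN_COND} are routine.
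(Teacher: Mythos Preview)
Your argument is correct and follows precisely the approach the paper indicates, namely adapting the proof of \cite[Proposition~1.18]{Ho2}; the paper itself gives no details beyond that one-line reference. Your identification of the reduction step (finiteness of $\Proj_{\T^m}(\HH)$) as the crux, followed by the rank comparison in $\pi_1(\G/\HH_0)$ via Lemma~\ref{L:SIMP_CONN_COND}, matches Hoelscher's strategy exactly.
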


It is well known that every compact connected Lie group has a finite cover of the form $\G_{ss} \times \T^ k$ , where $\G_{ss}$ is semisimple and simply
connected and $\T^ k$  is a torus. The classification of compact simply connected
semisimple Lie groups is also well known. We refer the reader to \cite[Section 1.24]{Ho2} for a list of such groups and their subgroups needed in our classification. We also have the following proposition (cf.~\cite[Proposition 1.25]{Ho2}), which gives further restriction on the groups. 


\begin{prop}
 Let $\M$ be the cohomogeneity one topological manifold given by the group diagram $(\G, \HH,  \K^-, \K^+)$, where $\G$ acts non-reducibly on $\M$. 
 Suppose that $\G$ is the product of groups
\begin{equation*}
\label{P:BOUND_DIMH}
	\G=\Pi_1^{i}(\SU(4))\times \Pi_1^{j}(\G_2)\times \Pi_1^{k}(\Sp(2))\times \Pi_1^{l}(\SU(3))\times \Pi_1^{m}(\RS^3)\times (\RS^1)^n.
\end{equation*}
 Then
\begin{equation*}
	\dim \HH \leq 10i+8j+6k+4l+m.
\end{equation*}
\end{prop}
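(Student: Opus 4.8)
The plan is to bound $\dim \HH$ by the sum of the dimensions of its projections onto the factors of $\G$, and then to use non-reducibility to control each projection separately. Writing $\pi_f$ for the projection of $\G$ onto its $f$-th factor in the given decomposition, the group $\HH$ is a closed subgroup of $\prod_f \pi_f(\HH)$, so that $\dim \HH \leq \sum_f \dim \pi_f(\HH)$, where the sum runs over the simple factors $\SU(4),\G_2,\Sp(2),\SU(3),\RS^3$ and over the circle factors of $(\RS^1)^n$. It therefore suffices to bound each $\dim \pi_f(\HH)$, and the whole argument rests on the observation that a full projection of $\HH$ onto a direct factor forces the action to reduce.

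For a simple factor I would argue as follows. Given a simple factor $\G_f$, split $\G=\G_f\times \G'$, where $\G'$ collects the remaining factors. If $\Proj_{\G_f}(\HH)=\G_f$, then by Proposition~\ref{reducing action} the complementary normal subgroup $\G'\times 1$ acts with the same orbits; since $\G_f\neq 1$ this subgroup is proper, contradicting non-reducibility. Hence $\pi_f(\HH)$ is a proper closed subgroup of $\G_f$, and $\dim\pi_f(\HH)$ is at most the dimension of a maximal proper subgroup of $\G_f$. These dimensions are the classical ones and match exactly the coefficients in the statement: $10$ for $\SU(4)$ (realized by $\Sp(2)$), $8$ for $\G_2$ (by $\SU(3)$), $6$ for $\Sp(2)$ (by $\Sp(1)\times\Sp(1)$), $4$ for $\SU(3)$ (by $\mathrm{S}(\mathrm{U}(2)\times\mathrm{U}(1))$), and $1$ for $\RS^3$ (by $\RS^1$). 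Summed with the correct multiplicities $i,j,k,l,m$ this already accounts for the right-hand side.

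The main obstacle will be the torus, since the claimed bound contains no term in $n$ and so forces $\pi_\T(\HH)$ to be finite. Here the factor-dropping trick must be set up more carefully, as the circle factors are not canonically given. I would argue that if $\pi_\T(\HH)\subseteq (\RS^1)^n$ had positive dimension, then it would contain a one-dimensional subtorus $C$; because the integral lattice of a subtorus is generated by a primitive vector, $C$ splits off as a direct factor $(\RS^1)^n = C\times \T'$. Then $\Proj_C(\HH)=C$, and Proposition~\ref{reducing action} again produces the proper normal subgroup $(\G_{ss}\times\T')\times 1$ acting with the same orbits, contradicting non-reducibility. Consequently $\pi_\T(\HH)$ is finite, contributing $0$ to the sum, and combining this with the simple-factor estimates yields $\dim \HH \leq 10i+8j+6k+4l+m$, as desired. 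The one point demanding care is this splitting-off of the circle direct factor, which is what makes the torus contribution vanish.
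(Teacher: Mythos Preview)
The paper does not supply its own proof of this proposition; it simply records the statement with a reference to \cite[Proposition~1.25]{Ho2}. Your argument is correct and is precisely the standard one underlying that reference: bound $\dim\HH$ by $\sum_f\dim\pi_f(\HH)$, use Proposition~\ref{reducing action} to see that non-reducibility forces each projection to a simple factor to be proper, and then read off the maximal proper closed subgroup dimensions $10,8,6,4,1$ for $\SU(4),\G_2,\Sp(2),\SU(3),\RS^3$ respectively. Your treatment of the torus part---splitting off a circle direct factor $C$ of $(\RS^1)^n$ contained in the identity component of $\pi_\T(\HH)$ and again invoking Proposition~\ref{reducing action}---is exactly what is needed to make the $n$-contribution vanish, and the primitivity/direct-factor observation you flag is indeed the only point requiring care.
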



We conclude this section with an observation on groups acting on the Poincar\'e homology sphere.


\begin{lem} 
\label{L:P3_TRANS}
Let $\G$ be a compact Lie group of dimension at most $2$. If $\SU(2)\times \G$ acts transitively on $\PS^3$, then $\G$ acts trivially on $\PS^3$ and the isotropy group of the $\SU(2)\times \G$ action is  $\I^*\times \G$.
\end{lem}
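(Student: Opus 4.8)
The plan is to show that any compact Lie group acting transitively on $\PS^3$ by the factor $\G$ of dimension at most $2$ must act trivially, and then identify the isotropy group of the full product action. By Theorem~\ref{T:BREDON_P3}(2), any compact Lie group acting almost effectively and transitively on $\PS^3$ is isomorphic to $\SU(2)$ or $\SO(3)$. Since $\dim \SU(2)=\dim \SO(3)=3$ and we are given $\dim \G\leq 2$, the full product $\SU(2)\times \G$ cannot act almost effectively on $\PS^3$ via both factors nontrivially if that would force the effective quotient to exceed dimension $3$; the key observation is that $\SU(2)$ already acts transitively, so the issue is to rule out that $\G$ contributes to the effective action.

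First I would consider the (ineffective) kernel $\K$ of the $\SU(2)\times \G$ action on $\PS^3$ and pass to the almost effective quotient $(\SU(2)\times \G)/\K$, which by Theorem~\ref{T:BREDON_P3}(2) must be isomorphic to $\SU(2)$ or $\SO(3)$, both of dimension $3$. Since $\dim(\SU(2)\times \G)\leq 3+2=5$ and the effective quotient has dimension exactly $3$, the kernel $\K$ has dimension at least $\dim \G$. Next I would argue that the restriction of the action to the subgroup $1\times \G$ must be trivial: if $1\times \G$ acted nontrivially, its image in the effective quotient would be a nontrivial normal subgroup, but $\SU(2)$ and $\SO(3)$ are (almost) simple, so their only proper normal subgroups are finite central ones; combined with the fact that $\SU(2)=\RS^3$ (the first factor) already surjects onto the effective quotient, a dimension count forces the image of $\G$ to be finite, hence $\G$ acts through a finite group. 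A further connectedness or centrality argument then shows this finite action is trivial, since $\G$ is compact and its nontrivial contribution would have to lie inside the center of the effective quotient while being compatible with the transitive $\RS^3$-action.

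Once $\G$ acts trivially on $\PS^3$, the transitivity is realized entirely by the $\SU(2)=\RS^3$ factor, whose isotropy group on $\PS^3\approx \SU(2)/\I^*$ is the binary icosahedral group $\I^*$. The isotropy group of the full $\SU(2)\times \G$ action at a point is then exactly $\{(s,g): s\cdot x=x\}=\I^*\times \G$, because every $g\in\G$ fixes every point of $\PS^3$. This gives both conclusions of the lemma simultaneously.

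The main obstacle I expect is the step ruling out that $\G$ acts through a nontrivial \emph{finite} group: dimension counting alone only forces the image of $\G$ in the effective quotient to be zero-dimensional, not trivial. To close this gap I would use that $\G$ is connected (or handle components separately) so that its image is a connected zero-dimensional subgroup, hence trivial; if $\G$ is not assumed connected, I would instead invoke that any finite normal subgroup of the effective action commuting with the transitive $\RS^3$-action must lie in the center and act freely, which is incompatible with being induced by a group acting trivially on the $\SU(2)$-orbit structure unless it is trivial. Making this centrality-plus-freeness argument airtight, rather than the routine dimension bookkeeping, is where the real care is needed.
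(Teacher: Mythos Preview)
Your overall strategy matches the paper's: pass to the effective quotient, invoke Bredon's Theorem~\ref{T:BREDON_P3} to see it has dimension $3$, and use a dimension count to force $\G$ into the kernel. The connected case is essentially the same argument, though the paper reaches the conclusion slightly more directly: rather than looking at the image of $1\times\G$ in the effective quotient and arguing via simplicity, it looks at $\Proj_1(\K)$, which is a connected normal subgroup of $\SU(2)$ of dimension at most $2$, hence trivial; this immediately gives $\K=1\times\G$.

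Where you diverge from the paper is exactly where you flag the difficulty, the disconnected case. Your proposed ``centrality-plus-freeness'' argument is vague and would be awkward to make rigorous. The paper's solution is much simpler and avoids this entirely: since $\SU(2)\times\G_0$ already acts transitively (it is the identity component of a group acting transitively on a connected space), apply the connected case to it to get $\I^*\times\G_0\subseteq\HH$. The isotropy $\HH$ of the full action then satisfies $\I^*\times\G_0\subseteq\HH\subseteq\I^*\times\G$, and since $\PS^3=(\SU(2)\times\G)/\HH$ is connected, $\HH$ must hit every component of $\SU(2)\times\G$, forcing $\HH=\I^*\times\G$. This is a two-line reduction to the connected case rather than a delicate structural argument, and it is what you should use.
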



\begin{proof}
Assume first that $\G$ is connected and let $\K$ be the kernel of the action. By Theorem~\ref{T:BREDON_P3}, $(\SU(2)\times \G)/\K$ is isomorphic to $\SU(2)$. Hence, $\dim \G= \dim \K$. Since $\K$ is a normal and connected subgroup of $\SU(2)\times \G$, $\Proj_1(\K)$ is a normal connected subgroup of $\SU(2)$. Thus $\Proj_1(\K)$ is trivial, as $\dim \G\leq 2$. As a result, $\K= 1\times \G $ and $\HH=\I^*\times \G$, where $\HH$ is the principal isotropy group.

Suppose now that  $\G$ is not connected. In this case,  $\SU(1)\times (\G)_0$ is connected and acts transitively on $\PS^3$ as a restriction of the action of $\SU(2)\times \G$. Therefore, $\I^*\times (\G)_0\subseteq \HH\subseteq \I^*\times \G$. Connectedness of the quotient $\PS^3=(\SU(2)\times \G)/\HH$ gives $\HH=\I^*\times \G$.
\end{proof}


\section{Proof of Theorem~\ref{T:CLASSIF}}
\label{S:CLASSIF}

Let $\G$ be a compact, connected Lie group acting almost effectively, non-reducibly  and with cohomogeneity one on a closed, simply connected  topological $n$-manifold $\M^n$, $5\leq n\leq 7$. We assume that the action is non-smoothable. Hence, by Theorem~\ref{T:SMOOTH_CONDITION}, at least one of $\K^\pm/\HH$, say $\K^+/\HH$, is homeomorphic to $\PS^3$, the Poincar\'e homology sphere $\PS^3$.  We analyze each dimension separately.
\\


\noindent \textbf{Dimension $\mathbf{5}$.} By Proposition~\ref{L:DIM_BOUND}, we have $4\leq \dim \G\leq 10$. Hence, by \cite[1.24]{Ho2}, $\G$ is one of $(\RS^3)^m\times \T^n$,  $\SU(3)\times \T^n$ or $\Spin(5)$. From Proposition \ref{P:ABELIAN_G}, we see that $n\leq 1$. Since $\dim \HH=\dim \G-4$, Proposition  \ref{P:BOUND_DIMH} gives the possible groups. These are, up to a finite cover:  $\RS^3\times \RS^1$, $\RS^3\times \RS^3$, $\SU(3)$ and $\Spin(5)$. On the other hand, since $\K^+/\HH=\PS^3$, $\dim \K^+=3+\dim \HH=\dim \G-1$. Therefore,   $\G=\RS^3\times \RS^1$ is the only possibility, since the other groups do not have a subgroup of the given dimension.

Now we determine the group diagrams for  $\RS^3\times \RS^1$. From Proposition~\ref{P:ABELIAN_G} we have $\K^{-}/\HH=\RS^1$,  so $\HH_0=\{(1,1)\}$, $\K^{+}_0=\RS^3\times 1$ and $\K^{-}_0=\{(e^{ip\theta}, e^{iq\theta})\}$.  Thus, $\K^{+}=\RS^3\times \mathbb{Z}_k$ and $\HH=\I^*\times \mathbb{Z}_k$ by Lemma~\ref{L:P3_TRANS}. Since $\I^*\times \mathbb{Z}_k\subseteq \K^-\subseteq N_{\RS^3\times \RS^1}(\K^{-}_0)$,  the group $\K^{-}_0$ must be $1\times \RS^1$, which yields the following diagram:
\begin{align*}
(\RS^3\times \RS^1,  \I^*\times \mathbb{Z}_k,   \I^*\times \RS^1, \RS^3\times \mathbb{Z}_k). 
\end{align*}
Hence, $\M$ is equivalent  to $\PS^3\ast \SP^1$, which is homeomorphic to the double suspension of $\PS^3$. By the Double Suspension Theorem, $\M$ is  homeomorphic to $\SP^5$ and the action is the one described in Example~\ref{E:C1P3}.\\


\noindent \textbf{Dimension $\mathbf{6}$.} Proceeding as in the $5$-dimensional case, we find that $5\leq \dim \G\leq 15$ and $\dim \HH=\dim \G-5$. It follows from Propositions~ \ref{P:BOUND_DIMH} and \ref{P:ABELIAN_G} that $\G$ must be one of $\RS^3\times \RS^3$, $\RS^3\times \RS^3\times \RS^1$, $\SU(3)$, $\SU(3)\times \RS^1$, $\Sp(2)$, $\Sp(2)\times \RS^1$ or $\Spin(6)$. On the other hand, since $\K^+/\HH=\PS^3$, we must have that $\dim \K^+=\dim \G-2$. This dimension restriction rules out all possible groups except $\RS^3\times \RS^3$. 

Now we determine the possible diagrams for $\G=\RS^3\times \RS^3$.   First, suppose that $\K^{+}/\HH=\PS^3$ and $\K^{-}/\HH=\SP^l$, $l\geq 1$, so 
$\K^{+}_0=\RS^3\times \RS^1$ and $\HH_0=\{(e^{ip\theta}, e^{iq\theta})\}$. If $l\geq 2$, $\G/\K^{-}$ is simply connected, and consequently $\K^{+}$ is connected by Proposition \ref{P:van_Kampen}. Therefore,  $\K^{+}=\RS^3\times \RS^1$, which acts transitively on $\PS^3$. By Lemma \ref{L:P3_TRANS}, $\RS^1$ acts trivially on $\PS^3$ and  $\HH=\I^*\times \RS^1$. For $l=2$, $\K^{-}$ has to be $\I^*\times \RS^3$, and we have the following diagram:
\begin{align*}
(\RS^3\times \RS^3, \I^*\times \RS^1,   \I^*\times \RS^3, \RS^3\times \RS^1). 
\end{align*}
Therefore, $\M$ is equivalent to $\PS^3\ast \SP^2$ with the action described in Example \ref{E:C1P3}. 

For $l=3$ there is no subgroup of $\RS^3\times \RS^3$ containing $\HH=\I^*\times \RS^1$ such that $\K^{-}/\HH=\SP^3$. Accordingly, no finite extension of the maximal torus of $\RS^3\times \RS^3$ contains $\HH$, so $\K^{-}/\HH\neq \RS^1$. 

The only remaining case to be  considered is when $\K^{-}/\HH$ is also the Poincar\'e sphere, i.e.~$\K^{-}/\HH=\PS^3$. We now show that the only possible diagram that can occur is
\begin{align*}
(\RS^3\times \RS^3,  \I^*\times \RS^1,  \RS^3\times \RS^1, \RS^3\times \RS^1). 
\end{align*}

 First, notice that  since $\K^+_0=\RS^3\times \RS^1$ acts transitively on $\PS^3$, Proposition \ref{L:DIM_BOUND} implies that $\I^*\times \RS^1\subseteq \HH$. Since $\K^-_0$ is a $4$-dimensional subgroup of $\G$ containing $\HH$, it has to be $\RS^3\times \RS^1$ as well. On the other hand, since $\M$ is simply connected, $\HH$ must be generated by $\HH_+$ and $\HH_-$ as in Lemma~\ref{L:SIMP_CONN_COND}, so $\I^*\times \RS^1\subseteq \HH\subseteq \RS^3\times \RS^1$.  The fact that $\I^*$ is a maximal subgroup of $\RS^3$ implies that $\HH=\I^*\times \RS^1$ and, as a result, $\K^-=\K^+=\RS^3\times \RS^1$. On the other hand, the following action on $\Susp\PS^3\times  \SP^2$ gives rise to the same diagrams as the $\RS^3\times \RS^3$-action on $\M$:
\begin{align*}
	(\RS^3\times \RS^3)\times (\Susp \PS^3\times  \SP^2)	&\to \Susp \PS^3\times  \SP^2\\
	((g,h), ([x,t], y))											&\mapsto ([gx,t], hyh^{-1}).
\end{align*}
Therefore, $\M$ is homeomorphic to $\Susp\PS^3\times  \SP^2$, which is in turn homeomorphic to $\SP^4\times  \SP^2$ by Theorem \ref{T:TOP_MFT}. \\


\noindent \textbf{Dimension $\mathbf{7}$.} By Proposition \ref{L:DIM_BOUND} we know that $6\leq \dim \G\leq 21$ and $\dim \HH=\dim \G-6$. As before, Propositions  \ref{P:BOUND_DIMH} and \ref{P:ABELIAN_G} give us the possible acting groups:  $\RS^3\times \RS^3$, $\RS^3\times \RS^3\times \RS^1$, $\SU(3)$, $\RS^3\times \RS^3\times \RS^3$, $\SU(3)\times \RS^1$, $\Sp(2)$, $\SU(3)\times \RS^3$, $\Sp(2)\times \RS^3$, $\G_2$, $\SU(4)$, $\SU(4)\times \RS^1$ and $\Spin(7)$. Since $\dim \K^+=\dim \G-3$, we easily rule out most of the groups and the only possible groups remaining are $\RS^3\times \RS^3$, $\RS^3\times \RS^3\times \RS^1$ and $\RS^3\times \RS^3\times \RS^3$. We analyze each case separately.
\\


\noindent $\mathbf{G=S^3\times S^3.}$   Assume first that $\K^{+}/\HH=\PS^3$ and $\K^{-}/\HH=\SP^l$, $l\geq 2$. By Proposition \ref{P:van_Kampen}, $\K^+$ is connected. Thus, we have   $\K^{+}=\RS^3\times 1, \Delta \RS^3$,  and $\HH=\I^*\times 1,  \Delta \I^*$, respectively.   On the other hand,  since $\dim \HH=0$, and $\K^{-}/\HH$ is simply connected, $\K^-_0$ is also simply connected. A glance at the subgroups of $\RS^3\times \RS^3$  shows that only $\RS^3\times \RS^3$ and its $3$-dimensional subgroups  are simply connected. Since $(\RS^3\times \RS^3)/\HH$ is not a sphere, $\K^-$ is necessarily $3$-dimensional. Therefore $\K^-_0$ is one of $\RS^3\times 1$, $1\times \RS^3$ or $\Delta \RS^3$.  It is apparent that $(\RS^3\times 1)/\HH$ is not a sphere, so we are left with the two other cases. We easily rule out the the case $\K^-_0=\Delta \RS^3$, since otherwise  $\K^-$, being a subgroup of $N(\Delta \RS^3)=\pm \Delta \RS^3$,  has at most two components, while  $\pi_0(\K^-)=\I^*$. Hence, $\K^-=\I^*\times \RS^3$, and  we have the two following diagrams:
\begin{align*}
	(\RS^3\times \RS^3, \I^*\times 1, \I^*\times \RS^3, \RS^3\times1)
\end{align*}
and
\begin{align*}
(\RS^3\times \RS^3,  \Delta \I^*,  \I^*\times \RS^3, \Delta \RS^3),
\end{align*}
with the following actions, respectively:
\begin{align*}
(\RS^3\times \RS^3)\times ( \PS^3\ast  \SP^3)&\to \PS^3\ast  \RS^3\\
((g,h), ([x,y,t], y))&\mapsto [gx, hy,t]
\end{align*}
and
\begin{align*}
(\RS^3\times \RS^3)\times ( \PS^3\ast  \SP^3)&\to \PS^3\ast  \SP^3\\
((g,h), ([x,y,t], y))&\mapsto [gx, gyh^{-1},t].
\end{align*}

Now let $\K^{-}/\HH=\SP^1$, so $\K^{-}_0=\{(e^{ip\theta}, e^{iq\theta})\}$. Since 
\[
	\I^*\times 1\subseteq \K^{-}\subseteq N_{\RS^3\times \RS^3}(\K^{-}_0),
\] 
$\K^{-}_0$ must be $1\times \RS^1$. Hence we have the diagram
\begin{align*} 
(\RS^3\times \RS^3, \I^* \times  \mathbb{Z}_k,  \I^* \times \RS^1, \RS^3\times \mathbb{Z}_k).
\end{align*}
In this case, the action is non-primitive. In fact, for $\LL= \RS^3\times \RS^1\subseteq \RS^3\times \RS^3$, we have the following diagram, which is the diagram of the   cohomogeneity one action of $\RS^3\times \RS^1$ on $\PS^3\ast \SP^1$ already described in dimension $5$:
\begin{align*} 
	(\RS^3\times \RS^1,  \I^* \times  \mathbb{Z}_k,  \I^* \times \RS^1, \RS^3\times \mathbb{Z}_k).
\end{align*}
Therefore, $\M$ is an $\SP^5$-bundle over $\SP^2$.

Finally, suppose that $\K^{-}/\HH=\PS^3$. If $\K^{+}_0= \RS^3\times 1$, then   there exists a subgroup of $\HH$, say  $\tilde{\HH}$, such that $\tilde{\HH}=\I^*\times 1$ and  $\K^{+}_0/\tilde{\HH}=\PS^3$.  Now we have two possibilities: either $\I^*\times 1\subseteq \K^{-}_0$ or $\I^*\times 1\nsubseteq \K^{-}_0$. Assume first that  $\I^*\times 1\subseteq \K^{-}_0$.  Thus   $\K^{-}_0=\RS^3\times 1$. On the other hand, by Proposition~\ref{L:SIMP_CONN_COND}, $\HH$ is generated by $\HH\cap  (\RS^3\times 1)$, which gives that $\HH=\tilde{\HH}=\I^*\times 1$.  Therefore, we have the  following group diagram:
 \begin{align}
 \label{D:6D} 
 	(\RS^3\times \RS^3, \I^*\times 1, \RS^3\times 1,   \RS^3\times 1).
 \end{align}
On the other hand, $\Susp  \PS^3 \times \SP^3$ admits a cohomogeneity one action given by
\begin{align*}
	(\RS^3\times \RS^3)\times (\Susp  \PS^3 \times \SP^3)& \to \Susp  \PS^3 \times \SP^3\\
	((g,h),([x,t], y))& \mapsto ([gx,t],hy),
\end{align*}
which gives diagram~\eqref{D:6D} above. Thus $\M$ is equivariantly homeomorphic to  $\Susp  \PS^3 \times \SP^3$.

Now assume that $\I^*\times 1\nsubseteq \K^{-}_0$. Hence $\K^{-}_0$ has to be $1 \times \RS^3$ and consequently  $1\times \I^*\subseteq \HH$.  Therefore, the following diagram appears:
 \begin{align*}
	  (\RS^3\times \RS^3,      \I^* \times  \I^*, \I^* \times \RS^3,  \RS^3\times \I^*).
 \end{align*}
As a result, $\M$ is equivalent to $\PS^3\ast \PS^3$ with the action given by
\begin{align*}
(\RS^3\times \RS^3)\times (\PS^3\ast \PS^3)& \to \PS^3\ast \PS^3\\
((g,h),([x,y,t]))& \mapsto [gx,hy,t].
\end{align*}

Now let $\K^+_0=\Delta \RS^3$, so that $\Delta \I^*\subseteq \HH\subset \K^{-}$. Notice that by the classification of transitive and almost effective  actions on $\PS^3$, the group $\Delta \RS^3$ acts on $\PS^3$ in the natural way. As a result, $\K^{-}_0=\Delta \RS^3$. On the other hand, by Proposition \ref{L:SIMP_CONN_COND}, $\HH$ is generated by $\Delta \RS^3\cap \HH$, which is a finite diagonal subgroup of $\RS^3\times \RS^3$ containing $\Delta \I^*$. Let $\Delta \Gamma = \Delta \RS^3\cap \HH$. Therefore, $\I^*\subseteq \Gamma$. Then $\Gamma$ must be $\I^*$, for $\I^*$ is a maximal subgroup of $\RS^3$. Hence $\HH=\Delta \I^*$ and  we  have the following diagram:
\begin{align*}
	(\RS^3\times \RS^3, \Delta  \I^*,  \Delta \RS^3, \Delta \RS^3).
\end{align*}

Therefore, $\M$ is  equivariantly homeomorphic to $\Susp  \PS^3 \times  \SP^3$ with the action
\begin{align*}
	(\RS^3\times \RS^3)\times (\Susp  \PS^3 \times \SP^3)& \to \Susp  \PS^3 \times \SP^3\\
((g,h),([x,t], y))& \mapsto ([gx,t],gyh^{-1}).
\end{align*}
We conclude that $\M$ is homotopy equivalent to $\SP^4\times \SP^3$. \\


\noindent $\mathbf{G=S^3\times S^3\times S^1.}$  In this case $\dim\HH=1$ and $\K^{-}/\HH=\RS^1$ by Proposition \ref{P:ABELIAN_G}. Therefore, $\K^{-}_0$ would be a $2$-torus subgroup of  $\G$, and $\K^{+}_0\subseteq \RS^3\times \RS^3\times 1$. We can assume that $\K^{+}_0=\RS^3 \times \RS^1\times 1$, and consequently, $\I^* \times \RS^1\times 1\subseteq \HH$. Since $\I^* \times \RS^1\times 1\subseteq \K^{-}\subseteq N(\K^{-}_0)=N_\G(\T^2)$, $\K^{-}_0$ has to be $1\times \T^2$. Therefore, we have the following diagram:
\begin{align*}
	(\RS^3\times \RS^3\times \RS^1,  \I^*\times \RS^1\times \mathbb{Z}_k,  \I^*\times \T^2, \RS^3\times \RS^1\times \mathbb{Z}_k). 
\end{align*}
This action is a non-primitive action. Indeed, for $\LL=\RS^3\times \T^2\subseteq \RS^3\times \RS^3\times \RS^1$, we have diagram
\begin{equation*}
(\RS^3\times \T^2, \I^*\times \RS^1\times \mathbb{Z}_k, \I^*\times \T^2, \RS^3\times \RS^1\times \mathbb{Z}_k),
\end{equation*}
 which is a non-effective extension of the cohomogeneity one almost effective action of $\RS^3\times \RS^1$   on $\PS^3\ast \SP^1$ described in dimension $5$. Thus $\M$ is a $\PS^3\ast \SP^1$-fiber bundle on $(\RS^3\times \RS^3\times \RS^1)/(\RS^3\times \T^2)$. Therefore, $\M$ is homeomorphic to an $\SP^5$-bundle over $\SP^2$. 
\\


\noindent $\mathbf{G=S^3\times S^3\times S^3.}$  We show that no non-reducible diagram for this case occurs. In fact, we will show that all possible diagrams in this case  reduce to the diagrams of the case $\G=\RS^3\times \RS^3$. First, note that  $\dim \HH=3$ and $\dim \K^+=6$. Recall that $\Proj_l$, $l=1,2,3$, denotes  projection onto the $l$-th factor of $\RS^3\times \RS^3\times \RS^3$. Since we assume that the action is non-reducible, it follows from Proposition~\ref{reducing action} that $\Proj_l(\HH_0)$, $l=1, 2, 3$, is not $\RS^3$. On the other hand, $\Proj_l(\HH_0)$ cannot be trivial. Otherwise, $\HH$ would be a $3$-dimensional subgroup of $\RS^3\times \RS^3$ and $\HH_0$ must project onto one of the factors. This yields a reducible action, which contradicts the assumption that the action is non-reducible. Thus, $\Proj_l(\HH_0)=\RS^1$, for $l=1, 2, 3$. An inspection of the subgroups of $\RS^3\times \RS^3\times \RS^3$ shows that none of the  $6$-dimensional subgroups of $\RS^3\times \RS^3\times \RS^3$ contains $\HH$. Therefore, no non-reducible action can occur. 
\hfill$\square$


\section{Proof of Theorem~\ref{T:TOP_MFT}}
\label{S:TOP_MFT}

To prove that $\Susp \PS^3\times \SP^2$ is homeomorphic to $\SP^4\times\SP^2$ we will use a special instance of the classification of closed, oriented, simply connected $6$-dimensional topological manifolds with torsion free homology. This classification follows from work of  Wall \cite{Wa}, Jupp \cite{J}, and Zhubr \cite{Zh}. We first recall the following theorem of Jupp (cf.~ \cite[Theorem~1]{J}).


\begin{thm}[Jupp]
\label{T:Class. of 6- MFD}
Orientation-preserving homeomorphism classes of closed, oriented, $1$-connected $6$-manifolds $\M$ with torsion free homology correspond bijectively with isomorphism
classes of systems of invariants:
\begin{itemize}
	\item $r=\rank H^3(\M,\Z)$, a nonnegative integer;\\
	\item $H=H^2(\M, \Z) $, a finitely generated free abelian group;\\
	\item $\mu: H\oplus H\oplus H \to  \Z$, a symmetric trilinear form given by the cup product evaluated on the orientation class;\\
	\item $p_1(\M)\in H^4(\M,\Z)$, the first Pontryagin class;\\
	\item $w_2(\M)\in H^2(\M,\Z_2)$, the second Stiefel--Whitney class;\\
	\item $\Delta(\M)\in H^4(\M,\Z_2)$, the Kirby--Siebenmann class.
\end{itemize}
The systems of invariants must satisfy the equation
\begin{align*}
\mu(2x + W, 2x + W, 2x + W) \equiv (p + 24T) (2x + W) \pmod{48}
\end{align*}
for all $x\in H$, where $W\in H$, $T\in \Hom_{\Z}(H, \Z)$ reduce $\mathrm{mod}~2$ to $w_2, \Delta$. Such a manifold has
a smooth (or PL) structure if and only if $\Delta(\M)=0$, and the smooth structure is unique.
\end{thm}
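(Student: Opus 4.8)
The plan is to prove this as a surgery-theoretic classification, following Wall's smooth classification \cite{Wa} and refining it to the topological category as in Zhubr \cite{Zh}. First I would record the underlying algebraic structure. For a closed, oriented, $1$-connected $6$-manifold $\M$ with torsion-free homology, the universal coefficient theorem together with Poincar\'e duality reconstruct all of $H^*(\M,\Z)$ from $H:=H^2(\M,\Z)$ (free, with $H^4\cong\Hom(H,\Z)$ via the orientation, so that $p_1$ and $24T$ may be read as homomorphisms $H\to\Z$) and from $H^3(\M,\Z)$, whose cup pairing is skew-symmetric and unimodular, hence free of even rank $r$. The cup product supplies the symmetric trilinear form $\mu$ on $H$, while $p_1,w_2,\Delta$ are the remaining characteristic data; the goal is to show that this system is a complete orientation-preserving homeomorphism invariant and that every admissible system occurs.

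The plan's second step is realization. I would first dispose of $H^3$: represent a symplectic basis of the skew unimodular pairing on $H^3$ by framed, disjointly embedded $3$-spheres (available since $\M$ is $1$-connected and $3<6$) and observe that surgery along them kills $H^3$, reducing $\M$ to a manifold $\M_0$ with $H^3=0$; conversely the $H^3$-part is realized as a connected sum of copies of $\SP^3\times\SP^3$, which carry exactly the hyperbolic skew form with vanishing $H^2$ and trivial characteristic classes. For the $H^2$-part I would realize the trilinear form together with the Pontryagin and Stiefel--Whitney data using standard building blocks---$\CP^3$, $\SP^2\times\SP^4$, the twisted $\SP^4$-bundle over $\SP^2$, and connected sums thereof---supplemented by the handlebody constructions needed for a general cubic form. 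The congruence
\begin{equation*}
\mu(2x+W,2x+W,2x+W)\equiv (p+24T)(2x+W)\pmod{48}
\end{equation*}
I would derive as the integrality constraint these manifolds satisfy: for a characteristic element $c=2x+W$ (so $c\equiv w_2\bmod 2$) one evaluates the Hirzebruch signature theorem on a closed oriented $4$-submanifold Poincar\'e dual to $c$, whose normal data and self-intersection are governed by $\mu(c,c,c)$ and $p_1\cdot c$. The resulting residue combines the smooth signature constraint with the $24T$ correction, where $T$ detects the Kirby--Siebenmann class and contributes precisely when $\M$ fails to be smoothable.

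For uniqueness I would run the two-step surgery argument. Given $\M_1,\M_2$ with isomorphic invariant systems, obstruction theory over their common cell structure (pinned down by $H$, $r$, $\mu$, $w_2$ and the induced $k$-invariants) yields an orientation-preserving homotopy equivalence $f\colon\M_1\to\M_2$ realizing the isomorphism. I would then show $f$ is normally cobordant to a homeomorphism: matching $p_1$ and $w_2$ makes $f$ a degree-one normal map, and in the $1$-connected topological setting its surgery obstruction lies in an $L$-group detected by the signature, hence by $\mu$, so it vanishes once the invariants agree; equality of the Kirby--Siebenmann classes removes the final topological obstruction. Since $\dim\M=6\geq 5$, topological surgery applies and $\M_1\approx\M_2$. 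The smoothing clause then follows from Kirby--Siebenmann theory \cite{KSi}: a closed topological manifold of dimension $\geq 5$ admits a PL structure if and only if $\Delta(\M)=0$, and in dimension $6$ the smoothing theory of Hirsch--Mazur--Munkres \cite{HM} together with $\Theta_6=0$ identifies PL with smooth structures and yields uniqueness.

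The hard part will be the uniqueness half---upgrading the abstract isomorphism of invariants first to a homotopy equivalence and then, via surgery, to a homeomorphism, and in particular verifying that the surgery obstruction is genuinely captured by the listed data rather than by hidden secondary invariants. A second delicate point is the precise modulus in the congruence and the identification of the $24T$ summand with the Kirby--Siebenmann contribution; this is exactly where Wall's smooth computation must be refined to the topological setting, following Zhubr \cite{Zh}.
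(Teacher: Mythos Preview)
The paper does not prove this theorem at all: it is quoted verbatim as Jupp's result (see the sentence ``We first recall the following theorem of Jupp (cf.~\cite[Theorem~1]{J})'') and then used as a black box in the proof of Theorem~\ref{T:TOP_MFT}. There is therefore nothing in the paper to compare your argument against; the authors defer entirely to \cite{Wa,J,Zh} for the proof.

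That said, your outline is broadly in the spirit of how the classification is actually established in those references. A few cautions if you intend to flesh it out. First, Jupp's original argument works in the PL category and then passes to TOP; the full topological statement with the Kirby--Siebenmann correction and the precise modulus $48$ is only rigorously pinned down in Zhubr~\cite{Zh}, so your identification of the $24T$ term with the Kirby--Siebenmann contribution needs that reference rather than a direct appeal to the signature theorem on a dual $4$-manifold. Second, your uniqueness step leans on topological surgery in dimension~$6$; this is legitimate, but the actual proofs in \cite{Wa,J} proceed more concretely by splitting off $\SP^3\times\SP^3$ summands and then using an explicit handlebody/normal-map analysis rather than invoking the surgery exact sequence abstractly. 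Finally, the homotopy-equivalence step (matching $k$-invariants from the listed data) is where hidden subtleties live, and Wall's paper handles this carefully; your sketch is correct in spirit but would need those details to be a proof.
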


We point out that for closed, oriented, simply connected $6$-dimensional topological manifolds the first Pontryagin class is always integral (see~\cite{J}). Okonek and Van de Ven  \cite[pp. 302--303]{OV} have summarized the classification in the special case where $r=0$ and $H=\Z$. This is the case that is relevant to us, since for $\Susp \PS^3\times \SP^2$ we have $r=0$ and $H=\Z$. We now recall these results.


\begin{prop}[\protect{\cite{OV}}]
\label{P:Invariants of Homotopy classes}
 Let $\M$ be as in Theorem~\ref{T:Class. of 6- MFD} with $r=0$ and $H=\Z$. The system of invariants introduced in Theorem \ref{T:Class. of 6- MFD} can be identified with $4$-tuples $(\bar{W},  \bar{T}, d, p)\in \Z_2 \times \Z_2 \times \Z \times \Z$, where the degree $d$ corresponds to the cubic form $\mu$. Such a $4$-tuple is admissible if and only if 
\begin{equation}\label{Relation}
d(2x + W)^3 \equiv (p + 24T) (2x + W)  \pmod{48},
\end{equation}
for every integer $x$.
\end{prop}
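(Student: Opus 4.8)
The plan is to deduce Proposition~\ref{P:Invariants of Homotopy classes} directly from Jupp's classification (Theorem~\ref{T:Class. of 6- MFD}) by specializing the invariant system to the rank-one torsion-free case $r=0$, $H=H^2(\M,\Z)\cong\Z$, and then translating Jupp's congruence into \eqref{Relation}. No new topology is needed beyond Poincar\'e duality and the universal coefficient theorem; the content is entirely the identification of the algebraic data.

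First I would fix a generator $g$ of $H=H^2(\M,\Z)\cong\Z$. Since $\M$ is closed, oriented and $1$-connected with torsion-free homology, Poincar\'e duality gives $H^4(\M,\Z)\cong H_2(\M,\Z)$, and the universal coefficient theorem (together with $H_2$ free of finite rank) identifies $H_2(\M,\Z)\cong\Hom(H^2(\M,\Z),\Z)=\Hom(H,\Z)\cong\Z$. Under this identification the evaluation pairing $H^4\otimes H^2\to\Z$ appearing on the right-hand side of Jupp's congruence becomes ordinary multiplication of integers; hence $p_1(\M)$ is recorded by an integer $p$, every $T\in\Hom(H,\Z)$ is an integer, and the symbol $(p+24T)(2x+W)$ is literally an integer product.

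Next I would record the remaining invariants. The symmetric trilinear form $\mu$ on the rank-one group $H$ is determined by the single number $d:=\mu(g,g,g)$, so under $H\cong\Z$ it is the cubic form $y\mapsto d\,y^3$; this is the degree $d$ in the statement. For the characteristic classes I would use that the integral cohomology of $\M$ is torsion free, so the relevant Bockstein homomorphisms vanish and the mod-$2$ reductions $H^2(\M,\Z)\to H^2(\M,\Z_2)$ and $H^4(\M,\Z)\to H^4(\M,\Z_2)$ are the surjections $\Z\to\Z_2$; in particular $H^2(\M,\Z_2)\cong H^4(\M,\Z_2)\cong\Z_2$. Thus $w_2(\M)$ is the mod-$2$ reduction $\bar{W}$ of an integral lift $W$, and the Kirby--Siebenmann class $\Delta(\M)$ is the reduction $\bar{T}$ of an integral lift $T$, producing the $4$-tuple $(\bar{W},\bar{T},d,p)\in\Z_2\times\Z_2\times\Z\times\Z$.

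Finally I would substitute these identifications into the constraint of Theorem~\ref{T:Class. of 6- MFD}: trilinearity of $\mu$ turns its left-hand side into $d(2x+W)^3$, and the integer-evaluation of the previous step turns its right-hand side into $(p+24T)(2x+W)$, yielding exactly \eqref{Relation} for every integer $x$; the converse, that every $4$-tuple obeying \eqref{Relation} underlies an admissible system and hence a manifold, is immediate from Jupp's theorem, so the correspondence is a bijection. The one point requiring genuine care---and the main, if mild, obstacle---is verifying that admissibility is intrinsic, i.e.\ that the congruence depends only on $\bar{W},\bar{T}$ and not on the chosen integral lifts $W,T$. Here I would note that replacing $W$ by $W+2$ merely reindexes the family $\{2x+W\}$ as $x$ ranges over $\Z$, while replacing $T$ by $T+2$ changes the right-hand side by $48(2x+W)\equiv 0\pmod{48}$; hence only the mod-$2$ classes $\bar{W},\bar{T}$ survive, precisely as the $\Z_2$-valued invariants $w_2$ and $\Delta$ require.
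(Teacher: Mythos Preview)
Your argument is correct. The paper does not supply its own proof of this proposition; it simply quotes the statement from Okonek and Van de Ven \cite{OV}, who record it as the specialization of Jupp's classification to the rank-one case. Your derivation---identifying $H^4(\M,\Z)\cong\Hom(H,\Z)\cong\Z$ via Poincar\'e duality and the universal coefficient theorem, reading off $d$, $p$, $\bar W$, $\bar T$ from the rank-one data, and checking that the congruence is independent of the choice of integral lifts $W$ and $T$---is exactly that specialization, carried out carefully and in full.
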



\begin{defn}[\protect{\cite{OV}}]
 Two admissible $4$-tuples $(\bar{W},  \bar{T}, d, p)$ and $(\bar{W'},  \bar{T'}, d', p')$ are equivalent if and only if $\bar{W}=\bar{W'}$, $\bar{T}=\bar{T'}$ and $(d', p')=\pm (d, p)$. 
\end{defn}


\begin{prop}[\protect{\cite{OV}}]
\label{Homeo Type: O-V}
The assignment 
\[
X\to (\bar{W},  \bar{T}, d, p).
\]
 induces a  $1$-$1$ correspondence between oriented homeomorphism classes of  closed,  oriented, simply connected $6$-dimensional topological manifolds with torsion free homology and equivalence classes of admissible systems of invariants, where $(\bar{W},  \bar{T}, d, p)$ is a normalized $4$-tuple, i. e. $d\geq 0$, and $p\geq 0$ if $d=0$. 
\end{prop}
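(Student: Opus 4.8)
This proposition is the specialization of Jupp's classification (Theorem~\ref{T:Class. of 6- MFD}) to the manifolds with $r=0$ and $H=H^2(\M,\Z)=\Z$, and the plan is to reconstruct that specialization, matching the reduced invariant system and its isomorphisms with the $4$-tuples and equivalence of Proposition~\ref{P:Invariants of Homotopy classes} and the preceding definition. First I would encode the invariant system of such a manifold as a $4$-tuple. Fix a generator $e$ of $H=\Z$. The symmetric trilinear form $\mu$ on $\Z$ is determined by the single integer $d=\mu(e,e,e)$, the degree of the cubic form. Since $\M$ has torsion-free homology and $r=0$, Poincar\'e duality and the universal coefficient theorem give $H^4(\M,\Z)\cong\Z$, with a preferred generator $a$ normalized by $\langle a\cup e,[\M]\rangle=1$; thus $p_1(\M)=p\,a$ for a well-defined integer $p$. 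Finally $H^2(\M,\Z_2)\cong\Z_2$ and $H^4(\M,\Z_2)\cong\Z_2$, and $w_2$, $\Delta$ are the mod-$2$ reductions of the integral classes $W\in H$ and $T\in\Hom_{\Z}(H,\Z)\cong\Z$, so they are recorded by $\bar W,\bar T\in\Z_2$. In this way Jupp's system is repackaged as the $4$-tuple $(\bar W,\bar T,d,p)$.

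Next I would translate the two remaining pieces of Theorem~\ref{T:Class. of 6- MFD}. The admissibility congruence specializes directly: substituting $\mu(y,y,y)=d\,y^3$ with $y=2x+W\in\Z$ into $\mu(2x+W,2x+W,2x+W)\equiv(p+24T)(2x+W)\pmod{48}$ reproduces relation~\eqref{Relation}, so the systems arising from manifolds correspond exactly to the admissible $4$-tuples. For the equivalence relation I would analyze how an isomorphism of invariant systems, together with the choice of orientation, acts on $(\bar W,\bar T,d,p)$. The only automorphism of $H=\Z$ is $e\mapsto-e$; it sends $d=\mu(e,e,e)\mapsto-d$ and, because $a$ is pinned by the pairing $\langle a\cup e,[\M]\rangle=1$, forces $a\mapsto-a$ and hence $p\mapsto-p$, while the mod-$2$ classes $\bar W,\bar T$ are unaffected. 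Reversing the orientation replaces $[\M]$ by $-[\M]$ and likewise sends $(d,p)\mapsto(-d,-p)$ and fixes $\bar W,\bar T$. Both effects therefore produce the single relation $(\bar W,\bar T,d,p)\sim(\bar W,\bar T,-d,-p)$, which is exactly the equivalence $(d',p')=\pm(d,p)$, $\bar W'=\bar W$, $\bar T'=\bar T$.

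To conclude, I would observe that each class of the relation $(d,p)\sim(-d,-p)$ has a unique normalized representative: one with $d\geq0$, and, in the remaining ambiguous case $d=0$, one with $p\geq0$. Hence normalized admissible $4$-tuples biject with equivalence classes of invariant systems, and Jupp's bijection between orientation-preserving homeomorphism classes and isomorphism classes of invariant systems descends to the stated $1$-$1$ correspondence between oriented homeomorphism classes and normalized admissible $4$-tuples. The main obstacle is the bookkeeping in the middle step: one must normalize the generator of $H^4$ correctly (via the cup pairing with the $H^2$-generator and the fundamental class) and verify that the automorphism of $H$ and orientation reversal act on $(d,p)$ with \emph{correlated} signs while leaving $\bar W$ and $\bar T$ fixed, so that the resulting equivalence is precisely $\pm(d,p)$ and the normalization yields a clean bijection rather than overcounting.
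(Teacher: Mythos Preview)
The paper does not supply its own proof of this proposition; it is quoted directly from Okonek and Van de Ven \cite{OV}, who record the specialization of Jupp's classification to the case $r=0$, $H=\Z$. Your approach---encoding the reduced invariant system of Theorem~\ref{T:Class. of 6- MFD} as a $4$-tuple via a choice of generator $e\in H$, translating the admissibility congruence into relation~\eqref{Relation}, and identifying isomorphism of invariant systems with the equivalence $(d,p)\sim(-d,-p)$ coming from the automorphism $e\mapsto-e$ of $H=\Z$---is exactly the natural derivation, and it is correct.

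One minor remark on the bookkeeping: Jupp's bijection is stated for \emph{orientation-preserving} homeomorphism classes, so the equivalence on $4$-tuples should arise solely from isomorphisms of the invariant system, i.e.\ from the nontrivial automorphism of $H=\Z$. Your additional invocation of orientation reversal is therefore redundant in that reading (though harmless, since it produces the same relation $(d,p)\mapsto(-d,-p)$ and fixes $\bar W,\bar T$). If one instead reads ``oriented homeomorphism classes'' as allowing orientation-reversing maps, your observation that both sources of ambiguity collapse to the same relation shows the statement is insensitive to this distinction. Either way, the normalization $d\geq 0$ (and $p\geq 0$ when $d=0$) picks out a unique representative in each class, and the bijection follows as you describe.
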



\begin{defn}[\protect{\cite{OV}}]
 Two  normalized $4$-tuples $(\bar{W},  \bar{T}, d, p)$, $(\bar{W'},  \bar{T'}, d', p')$  are weakly equivalent if and only if $d'=d$, $\bar{W'}=\bar{W}$, $p+24T\equiv p'+24T' \pmod{48}$ if $d\equiv 0  \pmod{2}$, $p\equiv p' \pmod{24}$ if $d\equiv 1  \pmod{2}$ .
\end{defn}


\begin{prop}[\protect{\cite{OV}}]
\label{Homtopy Type: O-V}
The assignment 
\[
X\to (\bar{W},  \bar{T}, d, p).
\]
induces a  $1$-$1$ correspondence between homotopy classes of simply connected, closed, oriented, $6$-dimensional topological manifolds with torsion free homology and weak equivalence classes of admissible systems of invariants. 
\end{prop}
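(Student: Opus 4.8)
The plan is to \emph{deduce} the homotopy classification from the homeomorphism classification already recorded in Proposition~\ref{Homeo Type: O-V}, by isolating which part of the invariant $(\bar W,\bar T,d,p)$ is a homotopy invariant. Since every homeomorphism is a homotopy equivalence, the set of homotopy types of the manifolds under consideration is a quotient of the set of oriented homeomorphism types, and it suffices to prove that two such manifolds $X,X'$ are orientation-preservingly homotopy equivalent if and only if their admissible tuples are weakly equivalent. I would split this into three parts: the forward implication, that homotopy equivalent manifolds carry weakly equivalent tuples; the reverse implication, that weakly equivalent tuples are realized by homotopy equivalent manifolds; and surjectivity onto weak equivalence classes, which is immediate from the surjectivity already present in Proposition~\ref{Homeo Type: O-V} (every admissible tuple is realized by some manifold, hence every weak equivalence class is hit).

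For the forward implication, first note that $d$ is the value of the cup-product form $\mu$ on a generator of $H=H^2(X;\Z)\cong\Z$ and is therefore manifestly a homotopy invariant, while $\bar W$ records $w_2(X)$, a homotopy invariant of a closed manifold by Wu's formula (for oriented $X$ one has $v_1=0$, so $w_2=v_2$ is determined by the action of $Sq^2$ on mod-$2$ cohomology). The delicate point is the behaviour of $p=p_1(X)$: by Novikov's theorem on the topological invariance of rational Pontryagin classes it is a homeomorphism invariant, but it is \emph{not} a homotopy invariant, and the heart of the proof is to show that exactly the residue prescribed by the weak equivalence relation is homotopy invariant. The mod-$2$ information is pinned down by the Wu relation $p_1\equiv w_2^2\pmod 2$, which ties $p$ to $\bar W$; the remaining factor of $24$ reflects the order of the image of the $J$-homomorphism $\pi_3(SO)\to\pi_3^s\cong\Z/24$, which governs the indeterminacy of $p_1$ under a change of tangential data that leaves the homotopy type fixed. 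Concretely, I would compare the stable tangent bundles of $X$ and $X'$ through the homotopy invariant Spivak normal fibration and show that their difference, as a normal invariant, forces $p_1(X)$ and $p_1(X')$ to agree modulo $24$ on a generator of $H^4$, the extra factor refining this to the mod-$48$ combination $p+24T$ precisely when $d$ is even, since there the Kirby--Siebenmann class $\bar T$ is itself free to vary and its variation is coupled to that of $p$.

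For the reverse implication I would exploit that weak equivalence differs from the homeomorphism equivalence of Proposition~\ref{Homeo Type: O-V} exactly by forgetting $\bar T$ and by relaxing $p$ to its residue class, subject to the admissibility relation of Proposition~\ref{P:Invariants of Homotopy classes}. Thus, given two weakly equivalent normalized tuples, I would realize each by a manifold using the homeomorphism classification and then show that passing from one to the other --- altering $\bar T$ and shifting $p$ within the admissible residue --- is effected by modifications that preserve the homotopy type, such as changing the smooth or topological structure so that $p_1$ jumps by a multiple of $24$ while the underlying cell structure is unchanged, or connected sum with a homotopy $\SP^6$. The main obstacle will be the precise bookkeeping in this last step: one must verify that the realizable modifications sweep out \emph{exactly} the weak equivalence class and no larger set, which amounts to matching the $24$-fold (respectively $48$-fold) indeterminacy of $p_1$ against the group of homotopy self-equivalences acting on the invariants. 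Once this matching is established in both directions, the assignment $X\mapsto(\bar W,\bar T,d,p)$ descends to a well-defined bijection between homotopy classes and weak equivalence classes of admissible tuples, which is the assertion.
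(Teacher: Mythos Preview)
The paper does not prove this proposition at all: it is quoted verbatim from Okonek and Van de Ven \cite{OV} (as the bracketed attribution indicates), alongside the companion Propositions~\ref{P:Invariants of Homotopy classes} and~\ref{Homeo Type: O-V} and the two intervening definitions. These results are used as black boxes in Section~\ref{S:TOP_MFT} to reduce the identification of $\Susp\PS^3\times\SP^2$ to a computation of $p_1$. So there is no ``paper's own proof'' to compare against; your proposal is an attempt to supply an argument where the authors simply cite the literature.

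As for the sketch itself: the overall architecture is sound --- deduce the homotopy classification from the homeomorphism classification by determining exactly which residue of $(p,\bar T)$ is homotopy invariant --- and this is indeed how the result is obtained in the sources (Wall, Jupp, Zhubr, as summarized by Okonek and Van de Ven). The forward direction is essentially correct in spirit: $d$ and $\bar W$ are homotopy invariants for the reasons you give, and the indeterminacy in $p_1$ is governed by the image of the $J$-homomorphism. The reverse direction, however, is where your outline is thinnest. You say you would ``realize each [tuple] by a manifold \ldots\ and then show that passing from one to the other \ldots\ is effected by modifications that preserve the homotopy type,'' but the actual content here is a surgery-theoretic computation: one must identify the set of manifold structures within a fixed homotopy type with an appropriate structure set, compute it via the surgery exact sequence (or, in this low dimension, by the explicit constructions in Wall and Jupp), and check that the resulting action on $(p,\bar T)$ has exactly the orbits prescribed by weak equivalence. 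Invoking ``connected sum with a homotopy $\SP^6$'' does nothing in the topological category (the topological Poincar\'e conjecture holds in dimension~$6$), and ``changing the smooth or topological structure so that $p_1$ jumps'' is precisely the statement to be proved, not a mechanism for proving it. If you want a self-contained argument you should either carry out that surgery computation or, more efficiently, appeal directly to \cite{Wa,J,Zh} as the paper does.
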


Now we use the above results to prove that $\Susp \PS^3\times \SP^2$ is homeomorphic to $\SP^4\times \SP^2$. Let  $(\bar{W},  \bar{T}, d, p)$ be the admissible $4$- tuple of $\M=\Susp \PS^3\times \SP^2$ as in Proposition \ref{P:Invariants of Homotopy classes}. Since $(0, 0, 0, 0)$ is the admissible $4$-tuple of $\SP^4\times \SP^2$, and $\M$ is homotopy equivalent  to $\SP^4\times \SP^2$, Proposition~\ref{Homtopy Type: O-V} implies that $d=0$, $\bar{W}=0$, and $p+24T\equiv  0 \pmod{48}$. On the other hand, in this case,  equation~\eqref{Relation} is equivalent to $p\equiv 0 \pmod{24}$, i.e. $p=24k$, for $k=0, \pm 1, \pm 2, \ldots$ Hence, $T\equiv p/24 \pmod{2}$; therefore, it suffices to compute the first Pontryagin class of $\M$. If $p=0$, then $\M$ is homeomorphic to $\SP^4\times \SP^2$ by Proposition \ref{Homeo Type: O-V}; if $p\equiv 0 \pmod{48}$,  $\M$ is smoothable and, if $p\equiv 24 \pmod{48}$, $\M$ is non-smoothable by Theorem \ref{T:Class. of 6- MFD}. 

 In the remainder of this section, we compute the first Pontryagin class $p_1$ of $\Susp \PS^3\times \SP^2$ and show that $p_1=0$. Note that rational Pontryagin classes are defined for topological manifolds and, more generally, for rational homology manifolds. One can also define Hirzebruch $l$-classes so that $l_1=\frac{1}{3}p_1$  (see \cite{Hi,K,M,MiS,Th,Za0}).  
 The $l$-classes are multiplicative, i.e.~given any two rational homology manifolds $\X$ and $\Y$, one has 
\begin{align*}
l_i(\X\times \Y)= \sum_{p+q=i}l_p(\X)l_q(\Y).
\end{align*}
Thus in our case, since $\Susp \PS^3$ is a rational homology manifold, we have \begin{align*}
l_1(\Susp \PS^3\times \SP^2)&= l_1(\Susp \PS^3)+ l_1(\SP^2)\\
&= l_1(\Susp \PS^3).
\end{align*}
Therefore, to find $p_1(\Susp \PS^3\times \SP^2)$, it suffices  to find $l_1(\Susp \PS^3)$.

First observe that $\Susp \PS^3\cong \SP^4/\I^*$, where $\I^*$ acts in the obvious way on a round $\SP^4$ by orientation preserving  diffeomorphisms. This observation allows us to use a formula of Zagier \cite{Zag} to compute the Hirzebruch $L$-class  of the quotient space $\X/\G$ of a closed oriented smooth manifold $\X$ by the  orientation preserving diffeomorphism action of a finite group $\G$. Since $\Susp \PS^3$ is $4$-dimensional, the top dimensional component of $L$ is $l_1$ and, by \cite{Zag}, $l_1$ equals the signature. Thus, to compute $l_1(\Susp \PS^3)$, we need only compute $\Sign(\Susp \PS^3)$. We do this using results of Atiyah and Singer \cite{AS} and of Atiyah and Bott \cite{AB}, which we briefly outline in the following paragraphs.

Let $\X$ be a closed, oriented, smooth manifold and $\G$ a finite group acting on $\X$ by orientation-preserving diffeomorphisms. Let 
\begin{align*}
\pi: \X\to \X/\G  
\end{align*}
be the projection map of $\X$ onto the orbit space $\X/\G$. As mentioned above,  the top dimensional component of $L(X/\G)$  is $\Sign(\X/\G)$. By the Atiyah-Singer $\G$-equivariant signature theorem (see \cite[Section~6]{AS} or \cite{Hi2}), the signature of $\X/\G$ is given by 
\begin{align}
\label{E:SIGNATURE}
\Sign(\X/\G)=\frac{1}{|\G|}\sum_{g\in \G}\Sign(g, \X).
\end{align}
In our particular case, where  $\G=\I^*$ acts on $\X=\SP^4$  with only two isolated fixed points, $\Sign(g,\X)$ is given by a signature formula of Atiyah and Bott \cite[Theorem~6.27]{AB}, which we state as Theorem~\ref{T:AB_SIGNATURE_FORMULA} below. Before quoting the theorem, we recall some notation.

Let $f: \X \to \X$ be an isometry of a compact, oriented even-dimensional Riemannian manifold $\X$ and $p$ be a fixed point of $f$. Consider the differential 
\begin{align*}
df_p: T_p\X \to T_p\X. 
\end{align*}
Because $f$ is an isometry of $X$, $df_p$ will be an isometry of $T_p\X$. Hence, one  may decompose $T_p\X$  into a direct sum of orthogonal $2$-planes 
\begin{align*}
 T_p\X = E_1 \oplus E_2 \oplus ... \oplus E_n,
 \end{align*}
which are stable under $df_p$. Let $(e_k, e'_k)$ be an orthogonal basis of $E_k$. We may choose $(e_k, e'_k)$ so that 
\begin{align*}
v_p(e_1\wedge e'_1\wedge ... \wedge e_n\wedge  e'_n)= 1,
\end{align*}
where $v$ is the volume form of $\X$.
Relative to such a basis $df_P$ is then given by rotations by angles $\theta_k$ in $E_k$. 
That is, \begin{align*}
df_p e_k &= \cos\theta_k e_k + \sin \theta_k e'_k \\[.2cm]
df_p e'_k &=- \sin\theta_k e_k + \cos \theta_k e'_k 
\end{align*}
The resulting set of angles $\{\theta_k\}$ is called a \emph{coherent system} for $df_p$. 


\begin{thm}[Atiyah and Bott]
\label{T:AB_SIGNATURE_FORMULA}
 Let $f: \X^{2n} \to \X^{2n}$ be an isometry of the compact oriented 
even dimensional Riemannian manifold $\X$. Assume further that $f$ has only 
isolated fixed points $\{p\}$, and let ${\theta_k^p}$ be a system of coherent angles for $df_p$. 
Then the signature of $f$ is given by 
 \begin{align*}
\Sign (f, \X) = \sum_p i^{-n} \prod_k  \cot (\theta_k^p /2).
\end{align*}
\end{thm}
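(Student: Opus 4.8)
The plan is to recognize $\Sign(f,\X)$ as the Lefschetz number of the \emph{signature operator} and then localize that Lefschetz number to the fixed points using the general Atiyah--Bott fixed point formula for elliptic complexes. First I would recall the signature operator. On the complexified exterior algebra $\Lambda^\ast T^\ast\X\otimes\C$ one builds an involution $\tau$ from the Hodge star (normalized by a suitable power of $i$ so that $\tau^2=\mathrm{id}$) and lets $\Omega^\pm$ be its $(\pm1)$-eigenbundles. The operator $D^+=d+d^\ast\colon \Gamma(\Omega^+)\to\Gamma(\Omega^-)$ is elliptic, its ordinary index is $\Sign(\X)$, and since $f$ is an orientation-preserving isometry it commutes with $d$, $d^\ast$, the Hodge star, and hence with $\tau$. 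Thus $f^\ast$ acts on $\ker D^+$ and $\operatorname{coker} D^+$, and equivariant Hodge theory identifies the Lefschetz number $\operatorname{tr}(f^\ast\mid\ker D^+)-\operatorname{tr}(f^\ast\mid\operatorname{coker} D^+)$ with the $\G$-signature $\Sign(f,\X)$ of \cite{AS}.

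Next I would invoke the Atiyah--Bott Lefschetz fixed point formula for the elliptic complex underlying $D^+$. Since $f$ has only isolated fixed points $\{p\}$ and each $df_p$ is a rotation with all angles $\theta_k^p\neq 0$, no eigenvalue of $df_p$ equals $1$, so $\det(1-df_p)\neq 0$ and every fixed point is simple. The fixed point theorem then localizes the Lefschetz number to a finite sum
\begin{equation*}
\Sign(f,\X)=\sum_p \nu_p,
\end{equation*}
where each local term $\nu_p$ depends only on the action of $df_p$ on $T_p\X$ together with the symbol of the signature operator at $p$.

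It remains to evaluate $\nu_p$. Over $\C$, diagonalizing $df_p$ on each stable $2$-plane $E_k$ gives eigenvalues $e^{\pm i\theta_k^p}$, and the symbol of the signature operator is assembled from the induced action on $\Lambda^\ast E_k\otimes\C$. A direct evaluation of the Atiyah--Bott symbol term on this exterior algebra produces a product of factors of the form $\frac{1+e^{-i\theta_k^p}}{1-e^{-i\theta_k^p}}$; multiplying numerator and denominator by $e^{i\theta_k^p/2}$ and using $e^{i\alpha}+e^{-i\alpha}=2\cos\alpha$ and $e^{i\alpha}-e^{-i\alpha}=2i\sin\alpha$ gives $\frac{1+e^{-i\theta}}{1-e^{-i\theta}}=i^{-1}\cot(\theta/2)$ per factor, whence $\nu_p=i^{-n}\prod_k\cot(\theta_k^p/2)$. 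Summing over the fixed points yields the asserted formula. The coherence condition $v_p(e_1\wedge e_1'\wedge\cdots\wedge e_n\wedge e_n')=1$ is precisely what fixes the branch of each half-angle, and therefore the overall power of $i$, unambiguously.

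The hard part will be twofold. Analytically, the core of the argument is the Atiyah--Bott fixed point theorem itself, whose proof localizes the Lefschetz number of an elliptic complex (via heat kernels or Cauchy-data spaces) and is by no means formal. Algebraically, the delicate point is pinning down the phase $i^{-n}$: the naive exterior-algebra computation determines $\nu_p$ only up to a unit, and matching it to the $\tau$-grading of the signature complex while respecting the chosen coherent orientation is exactly where the constant in Theorem~\ref{T:AB_SIGNATURE_FORMULA} is decided.
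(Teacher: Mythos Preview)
Your sketch is a correct outline of the original Atiyah--Bott argument: realize $\Sign(f,\X)$ as the Lefschetz number of the signature operator, apply the Atiyah--Bott fixed point formula to localize at the simple fixed points, and compute the local contribution on each $2$-plane to obtain the factor $i^{-1}\cot(\theta/2)$. The calculation $(1+e^{-i\theta})/(1-e^{-i\theta})=i^{-1}\cot(\theta/2)$ and your remarks about the role of the coherent orientation in fixing the phase are exactly the points one must check.

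That said, there is nothing to compare against here: the paper does \emph{not} prove Theorem~\ref{T:AB_SIGNATURE_FORMULA}. It is quoted verbatim from Atiyah and Bott \cite[Theorem~6.27]{AB} as a black box and then applied in the very next paragraph to compute $\Sign(g,\SP^4)$ for $g\in\I^*$. So your proposal is not an alternative to the paper's proof; it is (a condensed version of) the proof in the cited reference, which the present paper simply invokes.
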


We now use Theorem~\ref{T:AB_SIGNATURE_FORMULA} to compute $\Sign(g,X)$ for each $g\in \I^*$ and recover $\Sign(\SP^4/\I^*)$ via equation~\eqref{E:SIGNATURE}.  For non-trivial $g\in G$, the fixed point set $\X^g$ has two elements, say $\{p,q\}$. Let  $\{\alpha, \beta\}$ be the coherent system for $p$. Then the coherent system for $q$ will be $\{-\alpha, \beta\}$, so $\Sign(g, \SP^4)=0$. On the other hand, $\Sign(e, \SP^4)=\Sign(\SP^4)=0$ and $\Sign(-e, \SP^4)= 0$ by Theorem~\ref{T:AB_SIGNATURE_FORMULA}. As a result $\Sign(\SP^4/\I^*)=0$. Hence the top component of $L(\SP^4/\I^*)$ is zero. Since the top component of $L(\SP^4/\I^*)$ is  $3p_1(\SP^4/\I^*)=3p_1(\SP^4/\I^*\times \SP^2)$, we conclude that the first Pontryagin class of $\Susp \PS^3\times\SP^2$ is zero. Therefore, $\Susp \PS^3\times\SP^2$ is homeomorphic to $\SP^4\times \SP^2$.
\hfill$\square$




\begin{thebibliography}{10}

\bibitem{AB} Atiyah, M.F.~and Bott,  R., \emph{A Lefschetz fixed point formula for elliptic complexes: II. Applications}, Ann. of Math. (2) \textbf{88} 1968 451--491.

\bibitem{AS} Atiyah, M.F.~and Singer, I.M.,  \emph{The index of elliptic operators. III}, Ann. of Math. (2) \textbf{87} 1968 546--604.

\bibitem{Bo} B\"ohm, C., \emph{Non-compact cohomogeneity one Einstein manifolds}, Bull. Soc. Math. France \textbf{127} (1999), no. 1, 135--177.

\bibitem{Br} Bredon, G.E., \emph{On homogeneous cohomology spheres}, Ann. of Math. (2) \textbf{73} (1961) 556--565.

\bibitem{Bredon} Bredon, G.E.,  \emph{Introduction to Compact Transformation Groups}. Acad.  Press, New York, 1972.

\bibitem{BBI} Burago, D., Burago, Y.~and Ivanov, S., \emph{A course in metric geometry}, Graduate Studies in Mathematics, 33. American Mathematical Society, Providence, RI, 2001.

\bibitem{BGP} Burago, Y. Gromov, M.~and Perel'man, G., \emph{A.D. Alexandrov spaces with curvatures bounded below}, (Russian) Uspekhi Mat. Nauk 47 (1992), no. 2(284), 3--51, 222; translation in Russian Math. Surveys 47 (1992), no. 2, 1--58

\bibitem{Ca} Cannon, J.W., \emph{Shrinking cell-like decompositions of manifolds. Codimension three.} Ann. of Math. (2) \textbf{110} (1979), no. 1, 83--112.

\bibitem{DS} Dancer, A.~and Swann, A., \emph{Quaternionic Kahler manifolds of cohomogeneity one}, Internat. J. Math. 10 (1999), no. 5, 541--570.

\bibitem{De} Dearricott, O., \emph{A 7-manifold with positive curvature}, Duke Math. J. \textbf{158} (2011), no. 2, 307--346.

\bibitem{Ed} Edwards, R.D.,  \emph{Suspensions of homology spheres}, preprint (2006),  \href{http://arxiv.org/abs/math/0610573}{arXiv:math/0610573} [math.GT].

\bibitem{EU} Escher, C.M.~and Ultman, S.K., \emph{Topological structure of candidates for positive curvature}, Topology Appl. \textbf{158} (2011), no. 1, 38--51.

\bibitem{Fr} Frank, P.,  \emph{Cohomogeneity one manifolds with positive Euler characteristic}, Transform. Groups \textbf{18} (2013), no. 3, 639--684.

\bibitem{FY} Fukaya, K.~and Yamaguchi, T., \emph{Isometry groups of singular spaces}, Math. Z.  \textbf{216}  (1994), no. 1, 31--44.

\bibitem{GS} Galaz-Garcia, F.~and Searle, C. \emph{Cohomogeneity one Alexandrov spaces}, Transform. Groups,  \textbf{16} (2011), no. 1, 91--107.

\bibitem{GM} Goertsches, O.~and Mare, A., \emph{Equivariant cohomology of cohomogeneity one actions}, Topology Appl. \textbf{167} (2014), 36--52.

\bibitem{GVZ} Grove, K., Verdiani, L.~and Ziller, W., \emph{An exotic $T\sb 1\Bbb S\sp 4$ with positive curvature}, Geom. Funct. Anal. \textbf{21} (2011), no. 3, 499--524.

\bibitem{GWZ}Grove, K.,  Wilking, B.~and Ziller, W., \emph{Positively curved cohomogeneity one manifolds and 3-Sasakian geometry}, J. Differential Geom. \textbf{78} (2008), no. 1, 33--111.

\bibitem{GZ00} Grove, K.~and Ziller, W., \emph{Curvature and symmetry of Milnor spheres}, Ann. of Math. (2) \textbf{152} (2000), no. 1, 331--367.

\bibitem{GZ02} Grove, K.~and Ziller, W., \emph{Cohomogeneity one manifolds with positive Ricci curvature}, Invent. Math. \textbf{149} (2002), no. 3, 619--646.

\bibitem{HM} Hirsch, M.W.~and Mazur, B.,  \emph{Smoothings of piecewise linear manifolds}, Princeton University Press, Princeton, NJ, 1974, Annals of Mathematics Studies, No. 80.

\bibitem{Hi} Hirzebruch, F., \emph{Pontrjagin classes of rational homology manifolds and the signature of some affine hypersurfaces}, Proceedings of Liverpool Singularities Symposium, II (1970), 207--212.

\bibitem{Hi2} Hirzebruch, F., \emph{The signature theorem: reminiscences and recreation}, Prospects in mathematics (Proc. Sympos., Princeton Univ., Princeton, N.J., 1970), pp. 3--31. Ann. of Math. Studies, No. 70, Princeton Univ. Press, Princeton, N.J. 1971.

\bibitem{Ho1} Hoelscher, C.A., \emph{Diffeomorphism type of six-dimensional cohomogeneity one manifolds}, Ann. Global Anal. Geom. \textbf{38} (2010), no. 1, 1--9.

\bibitem{Ho2}  Hoelscher, C.A., \emph{Classification of cohomogeneity one manifolds in low dimensions}, Pacific J. Math. \textbf{246} (2010), no. 1, 129--185. 

\bibitem{Ho3} Hoelscher, C.A., \emph{On the homology of low-dimensional cohomogeneity one manifolds}, Transform. Groups \textbf{15} (2010), no. 1, 115--133.

\bibitem{J} Jupp, P.,  \emph{Classification of certain 6-manifolds},  Proc. Cambridge Philos. Soc.  \textbf{73}  (1973), 293--300

\bibitem{K} Kahn, P.J., \emph{A note on topological Pontrjagin classes and the Hirzebruch index formula}, Illinois J.~Math. \textbf{16} (1972), 243--256. 

\bibitem{KM} Kervaire, M.A.~and . Milnor, J. W, \emph{Groups of homotopy spheres. I}, Ann. of Math. (2) \textbf{77} (1963), 504--537.

\bibitem{KS} Kirby, R.C.~and Scharlemann, M.G., \emph{Eight faces of the Poincar\'e homology sphere}, Geometric topology (Proc. Georgia Topology Conf., Athens, Ga., 1977), pp. 113--146, Academic Press, New York-London, 1979.

\bibitem{KSi} Kirby, R.C.~and Siebenmann, L.C., \emph{Foundational essays on topological manifolds, smoothings, and triangulations}, Princeton Univ. Press, Princeton, 1977.

\bibitem{M} Maunder, C.R.F., \emph{On the Pontrjagin classes of homology manifolds}, Topology, \textbf{10}  (1971) 111--118.

\bibitem{MiS} Milnor, J.W.~and Stasheff, J.D., \emph{Characteristic classes}, Annals of Mathematics Studies, No. 76. Princeton University Press, Princeton, N.J.; University of Tokyo Press, Tokyo, 1974. 

\bibitem{MY} Montgomery, D. and Yang, C.T., \emph{Orbits of highest dimension}, Trans. Amer. Math. Soc. \textbf{87} (1958), 284--293.

\bibitem{MZ} Montgomery, D. and Zippin, L., \emph{A theorem on Lie groups}, Bull. Amer. Math. Soc. \textbf{48} (1942), 448--452.

\bibitem{Mo} Mostert, P.S., \emph{On a compact Lie group acting on a manifold}, Ann. of Math. (2) \textbf{65} (1957), 447--455. Errata, ``On a compact Lie group acting on a manifold''. Ann. of Math. (2) \textbf{66} 1957 589. 

\bibitem{Mu} Munkres, J. R., \emph{Obstructions to the smoothing of piecewise-differentiable homeomorphisms}, Ann.~of Math. (2) \textbf{72} (1960), 521--554.

\bibitem{MS} Myers, S.B.~and Steenrod, N.E., \emph{The group of isometries of a Riemannian manifold}, Ann. of Math. (2) \textbf{40} (1939), no. 2, 400--416. 

\bibitem{Ne} Neumann, W.D., \emph{$3$-Dimensional $G$-manifolds with $2$-dimensional orbits}. In: Mostert, P.S. (ed.) Proceedings of Conference on Transformation Groups, pp. 220--222. Springer Verlag, Berlin (1968).

\bibitem{OV} Okonek, C.~and Van de Ven, A.,  \emph{Cubic forms and complex $3$-folds}, Enseign. Math.  \textbf{41} (1995), no.3-4, 297--333.

\bibitem{Pa} Parker J., \emph{$4$-Dimensional $G$-manifolds with $3$-dimensional orbits}, Pacific J. Math. \textbf{129}  (1986), no. 1, 187--204.

\bibitem{Pe} Perelman, G., \emph{Elements of Morse theory on Aleksandrov spaces}, St. Petersburg Math. J. \textbf{5} (1994),  205--213.

\bibitem{RS} Rourke, C.P.~and Sanderson, B.J., \emph{Introduction to piecewise-linear topology}, Ergebnisse der Mathematik und ihrer Grenzgebiete, Band 69. Springer-Verlag, New York-Heidelberg, 1972.

\bibitem{ST} Schwachh\"ofer, L.J.~and Tuschmann, W., \emph{Metrics of positive Ricci curvature on quotient spaces}, Math. Ann. \textbf{330} (2004), no. 1, 59--91.

\bibitem{Se} Searle, C., \emph{Cohomogeneity and positive curvature in low dimensions}, Math.~Z. \textbf{214} (1993), no. 3, 491--498. Corrigendum: Math. Z. 226 (1997), no. 1, \textbf{165--167}.

\bibitem{Th} Thom, R., \emph{Les classes caract\'eristiques de Pontrjagin des vari\'et\'es triangul\'ees}, 1958 Symposium internacional de topolog\'ia algebraica International symposium on algebraic topology 54--67 Universidad Nacional Aut\'onoma de M\'exico and UNESCO, Mexico City. 

\bibitem{Wa} Wall, C.T.C., \emph{Classification problems in differential topology. V. On certain 6-manifolds}, Invent.~Math. \textbf{1} (1966), 355--374.

\bibitem{We} Weinberger, S., \emph{The topological classification of stratified spaces}, Chicago Lectures in Mathematics. University of Chicago Press, Chicago, IL, 1994. 

\bibitem{W} Wolf, J.A.,  \emph{Spaces of Constant Curvature}, McGraw-Hill Book Co., New York-London-Sydney 1967. 

\bibitem{Wu} Wu, J.Y., \emph{Topological regularity theorems for Alexandrov spaces}, J. Math. Soc, Japan, \textbf{49}  (1997), no. 4. 741--757.
 
\bibitem{Za0} Zagier, D., \emph{Equivariant Pontrjagin classes and applications to orbit spaces. Applications of the $G$-signature theorem to transformation groups, symmetric products and number theory}, Lecture Notes in Mathematics, Vol. 290. Springer-Verlag, Berlin-New York, 1972. 
 
\bibitem{Zag} Zagier,  D., \emph{The Pontrjagin class of an orbit space}, Topology, \textbf{11}  (1972),  253--264.

\bibitem{Zh}  Zhubr, A.V. \emph{Closed simply connected six-dimensional manifolds: proofs of classification theorems}, Algebra i Analiz \textbf{12} (2000), no.4, 126--230.

\bibitem{Zi} Ziller, W., \emph{On the geometry of cohomogeneity one manifolds with positive curvature}, Riemannian topology and geometric structures on manifolds, 233--262, Progr. Math., 271, Birkh\"auser Boston, Boston, MA, 2009. 
\end{thebibliography}
\end{document}